\def\Cset{\mathbb{C}}
\def\Rset{\mathbb{R}}
\def\Nset{\mathbb{N}}
\def\Kset{\mathbb{K}}
\theoremstyle{plain}
\newtheorem{thm}{Theorem}[section]
\newtheorem{lem}[thm]{Lemma}
\newtheorem{cor}[thm]{Corollary}
\newtheorem{prop}[thm]{Proposition}
\theoremstyle{definition}
\newtheorem{defn}[thm]{Definition}
\newtheorem{rem}[thm]{Remark}
\newtheorem{exmp}[thm]{Example}
\theoremstyle{remark}
\newtheorem*{prS11}{Proof of (S11)}
\newtheorem*{prS12}{Proof of (S12)}
\newtheorem*{prS13}{Proof of (S13)}
\title{A unified theory of cone metric spaces and its applications to the fixed point theory}
\author{Petko~D.~Proinov}
\date{}
\begin{document}
\maketitle
\vspace{-22pt}
\begin{center}
\emph{Faculty of Mathematics and Informatics, University of Plovdiv, Plovdiv 4000, Bulgaria}\\
\emph{proinov@uni-plovdiv.bg}
\end{center}
\begin{center}
\date{\today}
\end{center}

\medskip

\begin{abstract}
In this paper we develop a unified theory for cone metric spaces over a solid vector space.
As an application of the new theory we present full statements of the iterated contraction principle and the Banach contraction principle in cone metric spaces over a solid vector space.

\textit{Keywords:}
Cone metric space, Solid vector space, Picard iteration,
Fixed point, Iterated contraction principle, 
Banach contraction principle

\textit{2010 MSC:} 
54H25, 47H10, 46A19, 65J15, 06F30
\end{abstract}

\tableofcontents

\section{Introduction}
\label{sec:Introduction}

In 1905, the famous French mathematician Maurice Fr\'echet \cite{Fre05,Fre06} introduced the 
concept of metric spaces. 
In 1934, his PhD student the Serbian mathematician \DJ{}uro Kurepa \cite{Kur34} 
introduced more abstract metric spaces, in which the metric takes values in an ordered vector space.
In the literature the metric spaces with vector valued metric are known under various names: 
pseudometric spaces \cite{Kur34,Col64}, 
$K$-metric spaces \cite{EL75a,Zab97,RPS06}, 
generalized metric spaces \cite{Rze80},
vector-valued metric spaces \cite{AK11}, 
cone-valued metric spaces \cite{Chu81,Chu82},
cone metric spaces \cite{HZ07,JKR11}.

It is well known that cone metric spaces and cone normed spaces have deep applications in the numerical analysis and the fixed point theory.
Some applications of cone metric spaces can be seen in Collatz \cite{Col64} and Zabrejko \cite{Zab97}.
Schr\"oder \cite{Sch56a,Sch56b} was the first who pointed out the important role of cone metric spaces in the numerical analysis.
The famous Russian mathematician Kantorovich \cite{Kan39} was the first who showed the importance of cone normed spaces for the numerical analysis.

Starting from 2007 many authors have studied cone metric spaces 
over solid Banach spaces and fixed point theorems in such spaces 
(Huang and Zhang \cite{HZ07},
Rezapour and Hamlbarani \cite{RH08},
Wardowski \cite{War09},
Pathak and Shahzad \cite{PS09},
Sahin and Telsi \cite{ST10},
Amini-Harandi and Fakhar \cite{AF10},
S\"onmez \cite{Son10}, 
Latif and Shaddad \cite{LS10},
Turkoglu and Abuloha \cite{TA10},
Khamsi \cite{Kha10},
Radenovi\'c and Kadelburg \cite{RK11},
Khani and Pourmahdian \cite{KP11},
Asadi, Vaezpour and Soleimani \cite{AVS11} and others)

Recently, some authors have studied cone metric spaces over solid topological vector spaces and fixed point theorems in such spaces
(Beg, Azam and Arshad \cite{BAA09}, 
Du \cite{Du10,Du11}, 
Azam, Beg and Arshad \cite{ABA10}, 
Jankovi\'c, Kadelburg and Radenovi\'c \cite{JKR11}, 
Kadelburg, Radenovi\'c and Rako\v cevi\'c \cite{KRR11},
Arandelovi\'c and Ke\v cki\'c \cite{AK11},
Simi\'c \cite{Sim11},
\c Cakalli, S\"onmez and Gen\c c \cite{CSG10} and others)

The purpose of this paper is three-fold. 
First, we develop a unified theory for solid vector spaces. 
Second, we develop a unified theory for cone metric spaces over a solid vector space. 
Third, we present full statements of the iterated contraction principle and the Banach contraction principle in cone metric spaces over a solid vector space. 
The main results of the paper generalize, extend and complement some recent results of
Wei-Shih Du (2010), Kadelburg, Radenovi\'c and Rako\v cevi\'c (2011),
Pathak and Shahzad (2009), Wardowski (2009), Radenovi\'c and Kadelburg (2011) and others.

The paper is structured as follows.

In Section~\ref{sec:VectorSpacesWithConvergence} we introduce a simplified definition of a vector space with convergence which does not require an axiom for the uniqueness of the limit of a convergent sequence. Our axioms are enough to prove some fixed point theorems in cone metric spaces over solid vector spaces.

In Section~\ref{sec:SolidConesInVectorSpacesWithConvergence} we present a criterion for the interior of a solid cone in a vector space with convergence (Theorem~\ref{thm:SolidCone-Criterion}).  

In Section~\ref{sec:OrderedVectorSpaces} we introduce the definition of an ordered vector space and the well known theorem that the vector orderings and cones in a vector space with convergence are in one-to-one correspondence.

In Section~\ref{sec:StrictVectorOrderingsAndSolidCones} we introduce the new notion of a strict vector ordering on an ordered vector space. Then we show that an ordered vector space can be equipped with a strict vector ordering if and only if it is a solid vector space (Theorem~\ref{thm:StrictVectorOrdering-SolidCone}). 
Moreover, if the positive cone of a vector space is solid, then there exists only one strict vector ordering on this space. Hence, the strict vector orderings and solid cones in an vector space with convergence are in one-to-one correspondence.

In Section~\ref{sec:OrderTopologyOnSolidVectorSpaces}, we show that every solid vector space can be endowed with an order topology $\tau$ and that 
${x_n \rightarrow x}$ implies ${x_n \stackrel{\tau}{\rightarrow} x}$ 
(Theorems \ref{thm:OrderTopology1} and \ref{thm:OrderTopology2}). 
As a consequence we show that every convergent sequence in a solid vector space has a unique limit (Theorem~\ref{thm:ConvergenceSolidVectorSpace-UniqueLimit}).

In Section~\ref{sec:MinkowskiFunctionalOnSolidVectorSpaces}, using the Minkowski functional, we show that the order topology on every solid vector space is normable. We also show that every normal and solid vector space $Y$ is normable
in the sense that there exists a norm ${\|\, \, . \, \|}$ on $Y$ such that ${x_n \rightarrow x}$ if and only if 
${x_n \stackrel{\|.\,\|}{\rightarrow} x}$ (Theorem~\ref{thm:MinkowskiFunctional-SolidCone}). 
Also we show that the convergence of sequences in a normal and solid vector space has the properties of the convergence in $\Rset$ (Theorem~\ref{thm:ConvergenceNormalSolidVectorSpace-Properties}).
This result shows that the Sandwich theorem plays an important role in solid vector spaces. 

In Section~\ref{sec:ConeMetricSpacesConeNormedSpaces} we introduce the definitions of cone metric spaces and cone normed spaces. Note that in our definition of a cone normed space $(X, \| . \|)$ we allow $X$ to be a vector space over an arbitrary valued field $\Kset$.

In Section~\ref{sec:ConeMetricSpacesOverSolidVectorSpaces} we study cone metric spaces over solid vector spaces.
The theory of such cone metric spaces is very close to the theory of the usual metric spaces.
For example, every cone metric space over a solid vector space is a metrizable topological space (Theorem~\ref{thm:CMS-Metrizability}) and in such spaces the nested ball theorem holds (Theorem~\ref{thm:NestedBallTheorem}). 
Among the other results in this section we prove that every cone normed space over a solid vector space is normable (Theorem~\ref{thm:CNS-Normability}).
Also in this section we give some useful properties of cone metric spaces which allow us to establish convergence results for Picard iteration with a priori and a posteriori error estimates.
Some of the results in this section generalize, extend and complement some results of
Du \cite{Du10},
Kadelburg, Radenovi\'c and Rako\v cevi\'c \cite{KRR11,KRR09},
\c Cakalli, S\"onmez and Gen\c c \cite{CSG10},
Simi\'c \cite{Sim11},
Abdeljawad and Rezapour \cite{AR11},
Arandelovi\'c and Ke\v cki\'c \cite{AK11},
Amini-Harandi and Fakhar, \cite{AF10},
Khani and Pourmahdian \cite{KP11},
S\"onmez \cite{Son10},
Asadi, Vaezpour and Soleimani \cite{AVS11},
{\c S}ahin and Telsi \cite{ST10}.    
Azam, Beg and Arshad \cite{ABA10}.

In Section~\ref{sec:IteratedContractionsInConeMetricSpaces} we establish a full statement of the iterated contraction principle in cone metric spaces over a solid vector space.
The main result of this section (Theorem~\ref{thm:CMS-IterationContractionPrinciple2}) generalizes, extends and complements some results of Pathak and Shahzad \cite{PS09}, 
Wardowski \cite{War09}, Ortega and Rheinboldt \cite[Theorem {12.3.2}]{OR70} and others.

In Section~\ref{sec:ContractionMappingsInConeMetricSpaces} we establish a full statement of the Banach contraction principle in cone metric spaces over a solid vector space.
The main result of this section (Theorem~\ref{thm:CMS-BanachContractionPrinciple}) 
generalizes, extends and complements 
some results of 
Rezapour and Hamlbarani \cite{RH08},
Du \cite{Du10},
Radenovi\'c and Kadelburg \cite{RK11} and others.

\section{Vector spaces with convergence}
\label{sec:VectorSpacesWithConvergence}

In this section we introduce a simplified definition for the notion of vector spaces with convergence. Our definition is different from those given in the monograph of Collatz \cite{Col64} and in the survey  paper of Zabrejko \cite{Zab97}. In particular, we do not need an axiom for the uniqueness of the limit of a convergence sequence. 

\begin{defn} \label{df:VectorSpaceConvergence}
Let $Y$ be a real vector space 
and let $S$ be the set of all infinite sequences in $Y$.
A binary relation $\rightarrow$ between $S$ and $Y$ 
is called a \emph{convergence} on $Y$ if it satisfies the following axioms:
\begin{description}
	\item (C1) If $x_n \rightarrow x$ and $y_n \rightarrow y$, then 
	$x_n + y_n \rightarrow x+y$.
	\item (C2) If $x_n \rightarrow x$ and $\lambda \in \Rset$, then 
	$\lambda\, x_n \rightarrow \lambda\, x$.
	\item (C3) If $\lambda_n \rightarrow \lambda$ in $\Rset$ and $x \in Y$, then 
	$\lambda_n\, x \rightarrow \lambda\, x$.
\end{description}
The pair ${(Y,\rightarrow)}$ is said to be a \emph{vector space with convergence}.
If ${x_n \rightarrow x}$, then ${(x_n)}$ is said to be a \emph{convergent sequence} in $Y$, and the vector $x$ is said to be a \emph{limit} of ${(x_n)}$. 
\end{defn}

The following two properties of the convergence in a vector space ${(Y,\rightarrow)}$ follow immediately from the above axioms. 
\begin{description}
	\item (C4) If ${x_n = x}$ for all $n$, then $x_n \rightarrow x$.
	\item (C5) The convergence and the limits of a sequence do not depend on the change of finitely many of its terms. 
	\end{description}

\begin{defn} \label{df:OpenClosedSet}
Let ${(Y,\rightarrow)}$ be a vector space with convergence.
\begin{description}
	\item (a) A set ${A \subset Y}$ is said to be (\emph{sequentially}) \emph{open} 
	if ${x_n \rightarrow x}$ and ${x \in A}$ imply 
	${x_n \in A}$ for all but finitely many $n$.
	\item (b) A set ${A \subset Y}$ is said to be (\emph{sequentially}) \emph{closed}
	if ${x_n \rightarrow x}$ and ${x_n \in A}$ for all $n$ imply ${x \in A}$.
\end{description}
\end{defn}

\begin{rem}
Let ${(Y,\rightarrow)}$ be a vector space with convergence.
It is easy to prove that if a set $A \subset Y$ is open, then $Y \backslash A$ is closed.
Let us note that the converse holds true provided that each subsequence of a convergent sequence in $Y$ is convergent with the same limits.
\end{rem}

The following lemma follows immediately from the definition of an open set.

\begin{lem} \label{lem:OpenSets-Topology}
Let ${(Y,\rightarrow)}$ be a vector space with convergence. 
The open sets in ${\,Y}$ satisfies the following properties:
\begin{enumerate}
	\item {\large$\varnothing$} and $Y$ are open.
	\item The union of any family of open sets is open.
	\item The intersection of any finite family of open sets is open.
\end{enumerate}
\end{lem}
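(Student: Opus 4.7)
The plan is to verify each of the three properties directly from Definition~\ref{df:OpenClosedSet}(a), using axiom (C4) where needed. Each verification is essentially a one-line bookkeeping argument about the phrase ``for all but finitely many $n$,'' so the only real content is to handle the three clauses in turn.

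For (i), the set $\varnothing$ is open vacuously: the condition ``$x_n \rightarrow x$ and $x \in \varnothing$'' is never satisfied, so the implication in Definition~\ref{df:OpenClosedSet}(a) holds trivially. For $Y$ itself, any sequence $(x_n)$ in $Y$ automatically lies entirely in $Y$, so the required conclusion ``$x_n \in Y$ for all but finitely many $n$'' is immediate (in fact for all $n$).

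For (ii), let $\{A_\alpha\}_{\alpha \in I}$ be an arbitrary family of open subsets of $Y$ and put $A = \bigcup_{\alpha \in I} A_\alpha$. Suppose $x_n \rightarrow x$ with $x \in A$. Then $x \in A_{\alpha_0}$ for some $\alpha_0 \in I$, and since $A_{\alpha_0}$ is open we have $x_n \in A_{\alpha_0}$ for all but finitely many $n$. A fortiori $x_n \in A$ for all but finitely many $n$, so $A$ is open.

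For (iii), let $A_1, \dots, A_k$ be open and set $A = \bigcap_{i=1}^k A_i$. Suppose $x_n \rightarrow x$ with $x \in A$. For each $i \in \{1,\dots,k\}$ we have $x \in A_i$, so by openness of $A_i$ there exists $N_i \in \Nset$ with $x_n \in A_i$ for all $n \geq N_i$. Taking $N = \max\{N_1, \dots, N_k\}$, which is finite because the index set is finite, we conclude $x_n \in A$ for all $n \geq N$. Hence $A$ is open. There is no genuine obstacle; the only subtlety worth mentioning is that the finiteness in (iii) is essential, since taking a maximum of infinitely many $N_\alpha$ could fail to be finite, which is exactly why the topological axiom restricts to finite intersections.
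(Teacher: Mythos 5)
Your proof is correct and is exactly the routine verification the paper has in mind when it states that the lemma ``follows immediately from the definition of an open set'' (the paper omits the proof entirely). All three clauses, including the use of the maximum of finitely many indices in (iii), are handled properly.
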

 
\begin{rem}
Lemma~\ref{lem:OpenSets-Topology} shows that the family of all open subsets of 
${(Y,\rightarrow)}$ defines a topology on $Y$.
Note that in this paper we will never consider this topology on $Y$. 
\end{rem}

\begin{lem} \label{lem:OpenSets-Properties}
Let ${(Y,\rightarrow)}$ be a vector space with convergence.
Suppose $U$ and $V$ are nonempty subsets of $Y$.
Then the following statements hold true.
\begin{enumerate}
	\item If $U$ is open and $\lambda > 0$, then $\lambda U$ is open.
	\item If $U$ or $V$ is open, then $U + V$ is open.
\end{enumerate}
\end{lem}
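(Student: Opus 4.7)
The plan is to verify both statements directly from Definition~\ref{df:OpenClosedSet}(a), using the linearity axioms (C1)--(C3) together with the trivial property (C4) (constant sequences converge to their common value). Both parts have the same shape: given a convergent sequence $x_n \to x$ with limit in the target set, pull the sequence back by a continuous affine operation into the open set $U$, invoke openness of $U$ to conclude eventual membership there, then push forward again.

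For (i), suppose $U$ is open, $\lambda > 0$, and $x_n \to x$ with $x \in \lambda U$. Write $x = \lambda u$ for some $u \in U$, and set $y_n = \lambda^{-1} x_n$. Axiom (C2), applied with the scalar $\lambda^{-1}$, yields $y_n \to \lambda^{-1} x = u$. Openness of $U$ now forces $y_n \in U$ for all but finitely many $n$, whence $x_n = \lambda y_n \in \lambda U$ for all such $n$, proving $\lambda U$ is open.

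For (ii), assume without loss of generality that $U$ is open. Let $x_n \to x$ with $x \in U+V$, so $x = u+v$ for some $u \in U$ and $v \in V$. The constant sequence $(-v)$ converges to $-v$ by (C4), and (C1) combined with (C2) (applied to scalar $-1$) gives $x_n - v \to x - v = u$. Since $U$ is open, eventually $x_n - v \in U$, and therefore $x_n = (x_n - v) + v \in U+V$ eventually, which is exactly what is required.

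Neither step involves any serious obstacle; the only thing to watch is that \emph{a priori} the convergence relation is not assumed to respect subsequences or to guarantee unique limits, so the argument must stay within the definition of openness (which speaks of eventual membership, not of subsequential limits) and must avoid any appeal to uniqueness of limits. The formulation above does so: the pulled-back sequences are constructed so that their convergence follows directly from (C1)--(C4), and the conclusion is drawn straight from the ``all but finitely many'' clause in Definition~\ref{df:OpenClosedSet}(a).
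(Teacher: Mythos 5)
Your proof is correct and follows essentially the same route as the paper's: in both parts you pull the convergent sequence back through the affine map ($x_n \mapsto \lambda^{-1}x_n$, resp.\ $x_n \mapsto x_n - v$), apply (C2) (resp.\ (C1) and (C4)) to identify the limit, and invoke the definition of openness. The only cosmetic difference is that you take $U$ open in (ii) where the paper takes $V$ open, and your appeal to (C2) with scalar $-1$ is redundant since (C4) already gives the constant sequence $(-v)\to -v$; neither point affects correctness.
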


\begin{proof}
(i) Let ${\lambda > 0}$ and $U$ be an open subset of $Y$. Suppose ${(x_n)}$ is a convergent sequence in $Y$ with a limit ${x \in \lambda U}$. Then there exists a vector ${a \in U}$ such that ${x = \lambda a}$. Consider the sequence ${(a_n)}$ defined by 
${a_n = \frac{1}{\lambda}\, x_n}$. It follows from (C2) that ${a_n \rightarrow a}$ 
since ${a = \frac{1}{\lambda}\, x}$.
Taking into account that $U$ is open and ${a \in U}$, we conclude that 
${a_n \in U}$ for all but finitely many $n$. Then ${x_n \in \lambda U}$ for the same $n$ since 
${x_n = \lambda a_n}$. Therefore, the set ${\lambda U}$ is open.

(ii) Let $U$ be an arbitrary subset of $Y$ and $V$ be an open subset of $Y$.
Suppose ${(x_n)}$ is a convergent sequence in $Y$ with limit ${x \in U + V}$. Then there exist ${a \in U}$ and ${b \in V}$ such that ${x = a + b}$. Consider the sequence ${(b_n)}$ defined by ${b_n = x_n - a}$. It follows from (C1) and (C4) that ${b_n \rightarrow b}$  since ${b = x - a}$. Taking into account that $V$ is open and ${b \in V}$, we conclude that ${b_n \in V}$ for all but finitely many $n$. Then ${x_n \in U + V}$ for these $n$ since 
${x_n = a + b_n}$. Therefore, the set ${U + V}$ is open.
\end{proof}

Due to the first two statements of Lemma~\ref{lem:OpenSets-Topology} we can give the following definition. 

\begin{defn} \label{df:Interior}
Let $A$ be a subset of a vector space ${(Y,\rightarrow)}$.
The \emph{interior} ${A^{\circ}}$ of $A$ is called the biggest open subset contained in $A$, that is, ${A^{\circ} = \bigcup{U}}$ where $\bigcup$ ranges through the family of all open subsets of $Y$ contained in $A$.    
\end{defn}

The following lemma follows immediately from the definition of the notion of interior.

\begin{lem} \label{lem:Interior-Property}
Let $A$ and $B$ be two subsets of a vector space ${(Y,\rightarrow)}$. 
Then 
\[
A \subset B \text { implies } A^{\circ} \subset B^{\circ}.
\]
\end{lem}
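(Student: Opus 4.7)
The plan is to unfold the definition of interior and exploit monotonicity of unions with respect to the indexing family. Write $\mathcal{U}_A$ for the family of all open subsets of $Y$ contained in $A$, and similarly $\mathcal{U}_B$ for $B$. By Definition~\ref{df:Interior}, $A^\circ = \bigcup_{U \in \mathcal{U}_A} U$ and $B^\circ = \bigcup_{U \in \mathcal{U}_B} U$.

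First I would observe that the hypothesis $A \subset B$ immediately gives $\mathcal{U}_A \subset \mathcal{U}_B$: any open set $U$ with $U \subset A$ also satisfies $U \subset B$ by transitivity of inclusion. Then, taking the union over a smaller family produces a smaller (or equal) set, so $\bigcup_{U \in \mathcal{U}_A} U \subset \bigcup_{U \in \mathcal{U}_B} U$, which is exactly $A^\circ \subset B^\circ$.

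Alternatively, and perhaps cleaner, I would argue directly: $A^\circ$ is itself open (by Lemma~\ref{lem:OpenSets-Topology}(ii), as a union of open sets) and satisfies $A^\circ \subset A \subset B$. Hence $A^\circ$ is one of the open subsets of $Y$ contained in $B$, so it appears in the union defining $B^\circ$, giving $A^\circ \subset B^\circ$.

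There is no real obstacle here; the statement is essentially a formal consequence of the fact that the interior is defined as a supremum (union) of a family monotone in its defining set. The only thing worth being careful about is to invoke the correct earlier result to justify that $A^\circ$ is itself open, which is needed for the second version of the argument; the first version avoids this and uses only pure set theory.
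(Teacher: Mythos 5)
Your proof is correct and matches the paper's (the paper simply states that the lemma follows immediately from the definition of the interior, which is exactly what both of your arguments make explicit). Either version — the subfamily-of-unions argument or the observation that $A^{\circ}$ is an open subset of $B$ and hence contained in $B^{\circ}$ — is a valid formalization, and your citation of Lemma~\ref{lem:OpenSets-Topology}(ii) for the openness of $A^{\circ}$ is the right one.
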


\begin{exmp}
Let ${(Y, \tau)}$ be an arbitrary topological vector space and let ${\stackrel{\tau}{\rightarrow}}$ be the $\tau$-convergence in $Y$. 
Obviously, ${(Y, \stackrel{\tau}{\rightarrow})}$ is a vector space with convergence. 
It is well known that every $\tau$-open subset of ${(Y, \tau)}$ is sequentially open and every $\tau$-closed set is sequentially closed.  
Recall also that a topological space is called a \emph{sequential space} if it satisfies one of the following equivalent conditions:
\begin{description}
	\item (a) Every sequentially open subset of $Y$ is $\tau$-open.
	\item (b) Every sequentially closed subset of $Y$ is $\tau$-closed.
\end{description}
Let us note that according to a well known theorem of Franklin \cite{Fra65} every first countable topological vector space is a sequential space. 
For sequential topological spaces see a survey paper of Goreham \cite{Gor04}.
\end{exmp}

\section{Solid cones in vector spaces with convergence}
\label{sec:SolidConesInVectorSpacesWithConvergence}

In this section we establish a useful criterion for the interior of a solid cone.
This criterion will play an important role in 
Section~\ref{sec:StrictVectorOrderingsAndSolidCones}.

For more on cone theory, see the classical survey paper of 
Krein and Rutman \cite{KR48}, 
the classical monographs of 
Krasnoselskii \cite[Chapter~1]{Kra62}, 
Deimling \cite[Chapter~6]{Dei85}, 
Zeidler \cite[Section 1.6]{Zei95} 
as well as the recent monograph of Aliprantis and Tourky \cite{AT07}.
 
\begin{defn} \label{df:Cone}
A nonempty closed subset $K$ of a vector space ${(Y,\rightarrow)}$ is called a \emph{cone} if it satisfies the following properties:
\begin{enumerate}
	\item $\lambda K \subset K$ for any $\lambda \ge 0$;
	\item $K + K \subset K$;
	\item $K \cap (-K) = \{ 0 \}$.
\end{enumerate}
A cone $K$ is called \emph{trivial} if ${K =\{0\}}$.
A nontrivial cone $K$ is said to be a \emph{solid cone} if its interior is nonempty.
\end{defn}

\begin{lem} \label{lem:SolidCone-Criterion}
Let $K$ be cone in a vector space ${(Y,\rightarrow)}$. 
Then there is at most one nonempty open subset $U$ of $K$ satisfying the following conditions:
\begin{enumerate}
	\item $\lambda U \subset U$ for any $\lambda > 0$;
	\item $K + U \subset U$;
	\item $\, 0 \notin U$.
\end{enumerate}
\end{lem}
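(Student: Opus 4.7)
My plan is to prove that any two such subsets $U_1$ and $U_2$ must coincide, by showing $U_1\subset U_2$ (the reverse containment then follows by symmetry). So fix an arbitrary $x\in U_1$, and aim to show $x\in U_2$.

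The idea is to exploit all three properties of $U_2$ together with the openness of $U_1$. Since $U_2$ is nonempty, pick some $y\in U_2$. Condition (i) for $U_2$ gives $\lambda_n y\in U_2$ for any sequence $\lambda_n>0$; taking $\lambda_n=1/n$, axiom (C3) yields $\lambda_n y\to 0$ in $Y$, hence by (C1) and (C4),
\[
x-\lambda_n y \;\longrightarrow\; x.
\]
Because $U_1$ is open and $x\in U_1$, Definition~\ref{df:OpenClosedSet} forces $x-\lambda_n y\in U_1\subset K$ for all sufficiently large $n$. Fix one such $n$. Then $x-\lambda_n y\in K$ and $\lambda_n y\in U_2$, so condition (ii) for $U_2$ gives
\[
x \;=\; (x-\lambda_n y) + \lambda_n y \;\in\; K+U_2 \;\subset\; U_2,
\]
which is exactly what we wanted. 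Notice that condition (iii) is not actually needed for the uniqueness argument; properties (i) and (ii) plus the openness suffice.

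I do not expect any real obstacle here: the only subtlety is producing a sequence in $U_2$ that tends to $0$, which is handled by combining (i) with (C3). Once that is in hand, openness of $U_1$ pushes $x$ slightly into $U_1\subset K$ in a direction that can be compensated by adding back an element of $U_2$, and (ii) closes the argument.
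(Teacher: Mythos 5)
Your proof is correct and follows essentially the same route as the paper's: perturb $x$ by $\tfrac{1}{n}$ times an element of the other set, use openness to land the perturbed point in $K$, and recover $x$ via condition (ii). The only differences are cosmetic --- the paper phrases the key step as showing that \emph{every} nonempty open subset of $K$ is contained in $U$, and it invokes (iii) to pick a nonzero element, whereas your observation that (iii) is dispensable for the uniqueness argument is accurate.
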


\begin{proof}
Let $U$ be a nonempty open subsets of $K$ satisfying conditions (i)-(iii). First we shall prove that every nonempty open subset $V$ of $K$ is a subset of $U$.
Let a vector ${x \in V}$ be fixed.
Choose a vector ${a \in U}$ with ${a \neq 0}$. This is possible since $U$ is nonempty and 
${0 \notin U}$. Consider the sequence ${(x_n)}$ in $Y$ defined by 
${x_n = x - \frac{1}{n}\, a}$.    
It follows from (C1) and (C4) that ${x_n \rightarrow x}$. 
Since $V$ is open and ${x \in V}$, then there exists ${n \in \Nset}$ 
such that ${x_n \in V}$. Therefore, ${x_n \in K}$ since ${V \subset K}$. 
On the other hand it follows from (i) that ${\frac{1}{n}\, a \in U}$ since ${a \in U}$.
Then from (ii) we conclude that ${x = x_n + \frac{1}{n}\, a \in U}$
which proves that ${V \subset U}$.
Now if $U$ and $V$ are two nonempty open subsets of $K$ satisfying conditions (i)-(iii),
then we have both ${V \subset U}$ and ${U \subset V}$ which means that ${U=V}$.
\end{proof}

Now we are ready to establish a criterion for the interior of a solid cone.

\begin{thm} \label{thm:SolidCone-Criterion}
Let $K$ be a solid cone in a vector space ${(Y,\rightarrow)}$. 
Then the interior ${K^{\circ}}$ of $K$ has the following properties:
\begin{enumerate}
	\item $\lambda K^{\circ} \subset K^{\circ}$ for any $\lambda > 0$;
	\item $K + K^{\circ} \subset K^{\circ}$;
	\item $\, 0 \notin K^{\circ}$.
\end{enumerate}
Conversely, if a nonempty open subset ${K^{\circ}}$ of $K$ satisfies properties \emph{(}i\emph{)}-\emph{(}iii\emph{)}, then ${K^{\circ}}$ is just the interior of $K$.
\end{thm}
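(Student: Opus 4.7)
The plan is to handle the two directions separately and, for the converse, simply invoke the uniqueness already established in Lemma~\ref{lem:SolidCone-Criterion}. So the main work is to verify the three properties (i)--(iii) for the actual interior $K^\circ$.

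For property (i), I would observe that $K^\circ$ is open and $\lambda > 0$, so by Lemma~\ref{lem:OpenSets-Properties}(i) the set $\lambda K^\circ$ is open; since $K^\circ \subset K$ and $\lambda K \subset K$ by the cone axiom, we get $\lambda K^\circ \subset K$. Then, by the maximality in Definition~\ref{df:Interior}, $\lambda K^\circ \subset K^\circ$. Property (ii) is completely analogous: by Lemma~\ref{lem:OpenSets-Properties}(ii) the set $K + K^\circ$ is open (since $K^\circ$ is), and by the additivity of the cone it is contained in $K + K \subset K$, hence in $K^\circ$.

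The delicate point is (iii), so that is where I would spend the most care. I would argue by contradiction: suppose $0 \in K^\circ$. Pick an arbitrary $y \in Y$. Since $-1/n \to 0$ in $\Rset$, axiom (C3) gives $-(1/n) y \to 0$. As $K^\circ$ is open and $0 \in K^\circ$, there exists $n$ with $-(1/n) y \in K^\circ \subset K$, and then by cone axiom (i) of Definition~\ref{df:Cone} applied with the scalar $n$, we obtain $-y \in K$. Running the same argument with $1/n \to 0$ yields $y \in K$. Hence $y \in K \cap (-K) = \{0\}$, so $y = 0$. Since $y$ was arbitrary, $Y = \{0\}$, forcing $K = \{0\}$, contradicting the nontriviality built into the definition of a solid cone.

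For the converse, suppose $U$ is any nonempty open subset of $K$ satisfying (i)--(iii). By the forward direction, $K^\circ$ itself is a nonempty open subset of $K$ satisfying (i)--(iii) (nonempty because $K$ is solid). Lemma~\ref{lem:SolidCone-Criterion} now delivers the uniqueness of such a set, so $U = K^\circ$. The main obstacle, as noted, is (iii); the catch is remembering to exploit the nontriviality of $K$ at exactly that step, and using axioms (C3) together with the cone axioms to push an arbitrary vector of $Y$ through the open set.
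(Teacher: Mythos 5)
Your proposal is correct and follows essentially the same route as the paper: properties (i) and (ii) via Lemma~\ref{lem:OpenSets-Properties} and the maximality of the interior, property (iii) by contradiction using openness of $K^{\circ}$ at $0$ together with (C3) and positive scaling, and the converse by the uniqueness in Lemma~\ref{lem:SolidCone-Criterion}. The only (harmless) variation is in (iii), where you push an arbitrary $y\in Y$ into $K\cap(-K)$ to conclude $Y=\{0\}$, whereas the paper applies the same mechanism to a single nonzero $a\in K$; both contradict the nontriviality built into the definition of a solid cone.
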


\begin{proof}
\emph{First part}.
We shall prove that the interior ${K^{\circ}}$ of a solid cone $K$ satisfies properties (i)-(iii).

(i) Let ${\lambda > 0}$. It follows from Lemma~\ref{lem:OpenSets-Properties}(i) that 
${\lambda K^{\circ}}$ is open.  From ${K^{\circ} \subset K}$ and ${\lambda K \subset K}$, we obtain ${\lambda K^{\circ} \subset K}$. This inclusion and Lemma~\ref{lem:Interior-Property} imply ${\lambda K^{\circ} \subset K^{\circ}}$.

(ii) By Lemma~\ref{lem:OpenSets-Properties}(ii) ${K + K^{\circ}}$ is an open set.  It follows from ${K^{\circ} \subset K}$ and ${K + K \subset K}$ that 
${K + K^{\circ} \subset K}$. Now from Lemma~\ref{lem:Interior-Property}, we conclude that 
${K + K^{\circ} \subset K^{\circ}}$.

(iii) Assume that ${0 \in K^{\circ}}$.
Since $K$ is nonempty and nontrivial, then we can choose a vector ${a \in K}$ with 
${a \ne 0}$. By axiom (C3), ${-\frac{1}{n}\, a \rightarrow 0}$. Taking into account that ${K^{\circ}}$ is open, we conclude that there exists ${n \in \Nset}$ such that 
${-\frac{1}{n} a \in K^{\circ}}$. Then it follows from (i) that ${-a \in K^{\circ}}$. Since ${K^{\circ} \subset K}$, we have both ${a \in K}$ and ${-a \in K}$ which implies 
${a = 0}$.
This is a contradiction which proves that ${\, 0 \notin K^{\circ}}$.

\emph{Second part}.
The second part of the theorem follows from Lemma~\ref{lem:SolidCone-Criterion}.
Indeed, suppose that ${K^{\circ}}$ is a nonempty open subset of $K$ satisfying properties (i)-(iii). Then by Lemma~\ref{lem:SolidCone-Criterion} we conclude that ${K^{\circ}}$ is a unique nonempty open subset of $K$ satisfying these properties. On the other hand, it follows from the first part of the theorem that the interior of $K$ also satisfies properties (i)-(iii). Therefore, $K^{\circ}$ coincide with the interior of $K$.
\end{proof}

\section{Ordered Vector Spaces}
\label{sec:OrderedVectorSpaces}

Recall that a binary relation $\preceq$ on a set $Y$ is said to be
an \emph{ordering} on $Y$ if it is reflexive, antisymmetric and transitive. 

\begin{defn} \label{df:OrderedVectorSpace}
An ordering $\preceq$ on a vector space with convergence $(Y,\rightarrow)$ 
is said to be a 
\emph{vector ordering} if it is compatible with the algebraic and convergence structures on $Y$ in the sense that the following are true:
\begin{description}
	\item (V1) If $x \preceq y$, then $x+z \preceq y+z$;
	\item (V2) If $\lambda \ge 0$ and $x \preceq y$, then $\lambda x \preceq \lambda y$;
	\item (V3) If $x_n \rightarrow x$, $y_n \rightarrow y$, $x_n \preceq y_n$ for all $n$, then $x \preceq y$.    
\end{description}
	A vector space ${(Y,\rightarrow)}$ equipped with a vector ordering $\preceq$ is called an \emph{ordered vector space} and is denoted by ${(Y,\preceq,\rightarrow)}$.
	If the convergence $\rightarrow$ on $Y$ is produced by a vector topology $\tau$, we sometimes write  ${(Y,\tau,\preceq)}$ instead of ${(Y,\preceq,\rightarrow)}$. Analogously, if the convergence $\rightarrow$ on $Y$ is produced by a norm ${\|\, \, . \, \|}$, we sometimes write ${(Y,\|\, \, . \, \|,\preceq)}$.
\end{defn}

Axiom (V3) is known as \emph{passage to the limit in inequalities}.
Obviously, it is equivalent to the following statement:
\begin{description}
	\item {(V$3^\prime$)} If $x_n \rightarrow 0$, $x_n \succeq 0$ for all $n$, 
	then $x \succeq 0$. 
\end{description}
Every vector ordering $\preceq$ on an ordered vector space 
$(Y,\preceq,\rightarrow)$ satisfies also the following properties:
\begin{description}
	\item (V4) If $\lambda \le 0$ and $x \preceq y$, then $\lambda x \succeq \lambda y$;
	\item (V5) If $\lambda \le \mu$ and $x \succeq 0$, then $\lambda x \preceq \mu x$;
	\item (V6) If $\lambda \le \mu$ and $x \preceq 0$, then $\lambda x \succeq \mu x$;
	\item (V7) If $x \preceq y$  and $u \preceq v$ , then $x+u \preceq y+v$.
\end{description}

\begin{defn} \label{df:Positive Cone} 
Let ${(Y,\preceq,\rightarrow)}$ be an ordered vector space. 
The set
\begin{equation} \label{eq:PositiveCone}
Y_+ = \{ x \in Y : x \succeq 0 \}
\end{equation}
is called the \emph{positive cone} of the ordering $\preceq$ 
or \emph{positive cone} of $Y$.
\end{defn}

The following well known theorem shows that the positive cone is indeed a cone.
It shows also that the vector orderings and cones in a vector space 
${(Y,\rightarrow)}$ with convergence are in one-to-one correspondence.

\begin{thm} \label{thm:VectorOrdering-Cone}
Let ${(Y,\rightarrow)}$ be a vector space with convergence.
If a relation $\preceq$ is a vector ordering on $Y$, then its positive cone is a cone 
in $Y$.
Conversely, if a subset $K$ of $Y$ is a cone, then the relation $\preceq$ on $Y$ defined by means of
\begin{equation} \label{eq:Cone-VectorOrdering}
x \preceq y \quad \text{if and only if} \quad y-x \in K
\end{equation}
is a vector ordering on $Y$ whose positive cone coincides with $K$.
\end{thm}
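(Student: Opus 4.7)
The proof has two directions, and each is a routine translation between order-theoretic and cone-theoretic axioms. The main technical point in both directions is the interplay between the closedness of a cone (in the sequential sense of Definition~\ref{df:OpenClosedSet}) and axiom (V3) for passage to the limit in inequalities.

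For the forward direction, assume $\preceq$ is a vector ordering and set $K = Y_+$. I would first check that $K$ is nonempty and sequentially closed: reflexivity gives $0 \preceq 0$, so $0 \in K$; and if $x_n \rightarrow x$ with $x_n \succeq 0$ for all $n$, then taking the constant sequence $y_n = 0$ (which converges to $0$ by (C4)) and applying (V3) to $y_n \preceq x_n$ yields $0 \preceq x$, i.e.\ $x \in K$. The property $\lambda K \subset K$ for $\lambda \geq 0$ is immediate from (V2) applied to $0 \preceq x$. For $K + K \subset K$, if $x,y \succeq 0$ then property (V7) (listed just after axiom (V3)) gives $x+y \succeq 0$. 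Finally, $K \cap (-K) = \{0\}$ is exactly antisymmetry of $\preceq$.

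For the converse, assume $K$ is a cone and define $\preceq$ by \eqref{eq:Cone-VectorOrdering}. I would begin by recording that $0 \in K$: since $K$ is nonempty, pick any $u \in K$ and observe that $0 = 0 \cdot u \in \lambda K \subset K$ by property (i) with $\lambda = 0$. Then reflexivity of $\preceq$ follows because $x - x = 0 \in K$; antisymmetry follows because $y - x \in K$ and $x - y \in K$ force $y - x \in K \cap (-K) = \{0\}$; transitivity follows because $(y-x), (z-y) \in K$ implies $(z-x) = (z-y) + (y-x) \in K + K \subset K$. Axiom (V1) is built into the definition since $(y+z)-(x+z) = y-x$. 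Axiom (V2) is immediate from $\lambda(y-x) \in \lambda K \subset K$ for $\lambda \geq 0$.

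The one step requiring the convergence structure is axiom (V3). If $x_n \rightarrow x$, $y_n \rightarrow y$ and $x_n \preceq y_n$, then $y_n - x_n \in K$ for every $n$, and (C1)--(C2) give $y_n - x_n \rightarrow y - x$; since $K$ is (sequentially) closed by definition of a cone, we conclude $y - x \in K$, i.e.\ $x \preceq y$. Finally, the positive cone of this $\preceq$ is $\{x : x - 0 \in K\} = K$, so the correspondence is exact. I do not anticipate a real obstacle: the only subtlety is remembering that closedness of $K$ in Definition~\ref{df:Cone} is the sequential closedness of Definition~\ref{df:OpenClosedSet}, which is precisely what (V3) encodes.
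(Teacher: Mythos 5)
Your proof is correct and complete; the paper actually states Theorem~\ref{thm:VectorOrdering-Cone} as ``well known'' and omits any proof, so there is nothing to diverge from. Your verification is the standard one the paper implicitly relies on, and you correctly identify the only nontrivial point: the equivalence between sequential closedness of $K$ (Definition~\ref{df:Cone} via Definition~\ref{df:OpenClosedSet}) and axiom (V3), handled through (C1), (C2) and (C4).
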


\begin{defn} \label{df:Bounded/Increasing}
Let ${(Y,\preceq,\rightarrow)}$ be an ordered vector space.
\begin{description}
	\item (a) A set ${A \subset Y}$ is called \emph{bounded} if there exist two vectors in ${a,b \in Y}$ such that
	${a \preceq x \preceq b}$ for all ${x \in A}$. 
	\item (b) A sequence ${(x_n)}$ in $Y$ is called \emph{bounded} if the set of its terms is bounded.
	\item (c) A sequence ${(x_n)}$ in $Y$ is called \emph{increasing} if ${x_1 \preceq x_2 \preceq \ldots}$
	\item (d) A sequence ${(x_n)}$ in $Y$ is called \emph{decreasing} if ${x_1 \succeq x_2 \succeq \ldots}$.	
\end{description}
\end{defn}

\begin{defn} \label{df:SolidSpace}
An ordered vector space ${(Y,\preceq,\rightarrow)}$ is called a 
\emph{solid vector space} if its positive cone is solid.
\end{defn}

\begin{defn} \label{df:NormalSpace}
An ordered vector space ${(Y,\preceq,\rightarrow)}$ is called a 
\emph{normal vector space} whenever for arbitrary sequences ${(x_n)}$, ${(y_n)}$, ${(z_n)}$ in $Y$,
\begin{equation} \label{eq:SandwichTheorem1}
x_n \preceq y_n \preceq z_n \text{  for all  } n \, 
\text{ and }\, x_n \rightarrow x \,\text{ and }\, z_n \rightarrow x  
\quad \text{imply} \quad y_n \rightarrow x. 
\end{equation}
The statement \eqref{eq:SandwichTheorem1} is known as \emph{sandwich theorem} or \emph{rule of intermediate sequence}. 
\end{defn}

\begin{defn} \label{df:RegularSpace}
An ordered vector space ${(Y,\preceq,\rightarrow)}$ is called a \emph{regular vector space} if it satisfies one of the following equivalent conditions.
\begin{description}
	\item (a) Every bounded increasing sequence in $Y$ is convergent.
	\item (b) Every bounded decreasing sequence in $Y$ is convergent.
\end{description}
\end{defn}

\section{Strict vector orderings and solid cones}
\label{sec:StrictVectorOrderingsAndSolidCones}

In this section we introduce a notion of a strict vector ordering and prove that an 
ordered vector space can be equipped with a strict vector ordering if and only if it is a solid vector space. 

Recall that a nonempty binary relation $\prec$ on a set $Y$ is said to be a
\emph{strict ordering} on $Y$ if it is irreflexive, asymmetric and transitive.

\begin{defn} \label{df:StrictVectorOrdering}
Let ${(Y,\preceq,\rightarrow)}$ be an ordered vector space.
A strict ordering $\prec$ on $Y$ is said to be a \emph{strict vector ordering} if it is compatible with the vector ordering, the algebraic structure and  the convergence structure on $Y$ in the sense that the following are true:
\begin{description}
	\item (S1) If $x \prec y$ , then $x \preceq y$;
	\item (S2) If $x \preceq y$ and $y \prec z$, then $x \prec z$;
	\item (S3) If $x \prec y$ , then $x+z \prec y+z$;
	\item (S4) If $\lambda > 0$ and $x \prec y$, then $\lambda x \prec \lambda y$;
	\item (S5) If $x_n \rightarrow x$, $y_n \rightarrow y$ and $x \prec y$, 
	then $x_n \prec y_n$ for all but finitely many $n$.
\end{description}

An ordered vector space ${(Y,\preceq,\rightarrow)}$ equipped with a strict vector ordering
$\prec$ is denoted by ${(Y,\preceq,\prec,\rightarrow)}$. It turns out that ordered vector spaces with strict vector ordering are just solid vector spaces (see Corollary~\ref{cor:SolidVectorSpace=StrictVectorOrdering} below).  
\end{defn}

Axiom (S5) is known as \emph{converse property of passage to the limit in inequalities}.
It is equivalent to the following statement:
\begin{description}
	\item {(S$5^\prime$)} If $x_n \rightarrow 0$ and $c \succ 0$, 
	then $x_n \prec c$ for all but finitely many $n$. 
\end{description}
Strict vector ordering $\prec$ on a ordered vector space 
${(Y,\preceq,\rightarrow)}$ satisfies also the following properties:
\begin{description}
	\item (S6) If $\lambda < 0$ and $x \prec y$, then $\lambda x \succ \lambda y$.
	\item (S7) If $\lambda < \mu$ and $x \succ 0$, then $\lambda x \prec \mu x$.
	\item (S8) If $\lambda < \mu$ and $x \prec 0$, then $\lambda x \succ \mu x$.
	\item (S9) If $x \prec y$ and $y \preceq z$, then $x \prec z$.
	\item (S10) If $x \preceq y$  and $u \prec v$ , then $x+u \prec y+v$;
	\item (S11) If $x \prec c$ for each $c \succ 0$, then $x \preceq 0$.
	\item (S12) For every finite set ${A \subset Y}$ consisting of strictly positive vectors, there exists a vector 
	${c \succ 0}$ such that ${c \prec x}$ for all ${x \in A}$. Moreover, for every vector $b \succ 0$, $c$ always can be chosen in the form ${c = \lambda \, b}$ for some ${\lambda > 0}$.
	\item (S13) For every finite set ${A \subset Y}$, there is a vector ${c \succ 0}$ such that ${-c \prec x \prec c}$ for all ${x \in A}$. Moreover, for every vector $b \succ 0$, $c$ always can be chosen in the form ${c = \lambda \, b}$ for some ${\lambda > 0}$.
	\item (S14) For every ${x \in Y}$ and every ${b \in Y}$ with ${b \succ 0}$, there exists ${\lambda > 0}$ such that 
	${-\lambda \, b \prec x \prec \lambda \, b}$.
	\end{description}

The proofs of properties (S6)--(S10) are trivial. Property (S14) is a special case of (S13). So we shall prove (S11)--(S13). 

\begin{prS11}
Let $x$ be a vector in $Y$ such that $x \prec c$ for each $c \succ 0$. 
Choose a vector ${b \in Y}$ with ${b \succ 0}$. 
It follows from (S4) that ${\frac{1}{n}\, b \succ 0}$ for each ${n \in \Nset}$. 
Hence, ${x \prec \frac{1}{n}\, b}$ for each ${n \in \Nset}$. 
Passing to the limit in this inequality, we obtain ${x \preceq 0}$.
\qed
\end{prS11}

\begin{prS12}
Let $x$ be an arbitrary vector from $A$.
Choose a vector ${b \in Y}$ with ${b \succ 0}$. 
Since ${\frac{1}{n}\, b \rightarrow 0}$ and ${0 \prec x}$, then from (S5) we deduce that 
${\frac{1}{n}\, b \prec x}$ for all but finitely many $n$. 
Taking into account that $A$ is a finite set, we conclude that for sufficiently large $n$ we have ${\frac{1}{n}\, b \prec x}$ for all ${x \in A}$. 
Now every vector ${c = \frac{1}{n}\, b}$ with sufficiently large $n$ satisfies 
${c \prec x}$ for all ${x \in A}$. To complete the proof put ${\lambda = \frac{1}{n}}$.
\qed
\end{prS12}

\begin{prS13}
Let $x$ be an arbitrary vector from $A$.
Choose a vector ${b \in Y}$ with ${b \succ 0}$. 
Since ${\frac{1}{n}\, x \rightarrow 0}$ and ${-\frac{1}{n}\, x \rightarrow 0}$, then from (S5) we obtain that ${\frac{1}{n}\, x \prec b}$ and ${-\frac{1}{n}\, x \prec b}$ for all but finitely many $n$. From these inequalities, we conclude that ${-n b \prec x \prec n b}$.   
Taking into account that $A$ is a finite set, we get that every vector ${c = n b}$ with sufficiently large $n$ satisfies ${-c \prec x \prec c}$ for all ${x \in A}$. To complete the proof put ${\lambda = n}$.
\qed
\end{prS13}

The next theorem shows that an ordered vector space can be equipped with a strict vector ordering if and only if it is a solid vector space. 
Moreover, on every ordered vector space there is at most one strict vector ordering.
In other words the solid cones and strict vector orderings on a vector space with convergence are in one-to-one correspondence.

\begin{thm} \label{thm:StrictVectorOrdering-SolidCone}
Let ${(Y,\preceq,\rightarrow)}$ be an ordered vector space and let $K$ be its positive cone, i.e. ${K = \{ x \in Y : x \succeq 0 \}}$.
If a relation $\prec$ is a strict vector ordering on $Y$, then $K$ is a solid cone with the interior
\begin{equation} \label{eq:interior}
K^{\circ} = \{ x \in Y : x \succ 0 \}.
\end{equation}
Conversely, if $K$ is a solid cone with the interior $K^{\circ}$, then the relation $\prec$ on $Y$ defined by means of 
\begin{equation} \label{eq:Interior-StrictVectorOrdering}
x \prec y \quad \text{if and only if} \quad y-x \in K^{\circ}.
\end{equation}
is a unique strict vector ordering on $Y$.
\end{thm}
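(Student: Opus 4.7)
The plan is to handle the two directions symmetrically, reducing each to an application of Theorem~\ref{thm:SolidCone-Criterion}. I will write $U = \{x \in Y : x \succ 0\}$ throughout the forward direction and show $U = K^{\circ}$.

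For the forward direction, I first note that by Theorem~\ref{thm:VectorOrdering-Cone} the set $K$ is already a cone, so I only need to produce its interior. My plan is to verify that $U$ satisfies the three conditions of Theorem~\ref{thm:SolidCone-Criterion} and then invoke its second part to identify $U$ with $K^{\circ}$. The inclusion $U \subset K$ is immediate from (S1), and $U$ is nonempty because the strict ordering relation is nonempty by definition: picking any pair $x \prec y$ and applying (S3) with $z = -x$ yields $0 \prec y - x$. Openness of $U$ is precisely axiom (S5) applied with the constant sequence $y_n = 0$ and the limit $0 \prec x$. The three structural conditions (i)--(iii) fall out easily: (i) from (S4); (ii) from (S10) combined with $0 \preceq a$ for $a \in K$ and $0 \prec x$ for $x \in U$; and (iii) from irreflexivity of $\prec$. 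Hence $U$ is a nonempty open subset of $K$ with the required properties, and Theorem~\ref{thm:SolidCone-Criterion} forces $U = K^{\circ}$, which simultaneously shows that $K$ is solid and establishes \eqref{eq:interior}.

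For the converse, given a solid cone $K$ with interior $K^{\circ}$, I define $\prec$ via \eqref{eq:Interior-StrictVectorOrdering} and verify the axioms. The main lever will be the properties (i)--(iii) of $K^{\circ}$ from Theorem~\ref{thm:SolidCone-Criterion}, together with the derived inclusion $K^{\circ} + K^{\circ} \subset K + K^{\circ} \subset K^{\circ}$. Irreflexivity and the fact that the relation is nonempty follow from $0 \notin K^{\circ}$ and $K^{\circ} \neq \varnothing$; asymmetry follows because $y-x \in K^{\circ}$ and $x-y \in K^{\circ}$ would give $0 = (y-x)+(x-y) \in K + K^{\circ} \subset K^{\circ}$, contradicting (iii); transitivity uses $K^{\circ} + K^{\circ} \subset K^{\circ}$. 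For the axioms: (S1) is $K^{\circ} \subset K$; (S2) uses $K + K^{\circ} \subset K^{\circ}$; (S3) is trivial since $(y+z)-(x+z) = y-x$; (S4) uses $\lambda K^{\circ} \subset K^{\circ}$; and (S5) uses openness of $K^{\circ}$ applied to the sequence $y_n - x_n$, which converges to $y-x \in K^{\circ}$ by axioms (C1) and (C2).

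Finally, uniqueness is cheap once both directions are in hand. Given any strict vector ordering $\prec'$ on $Y$, the forward direction (applied to $\prec'$) shows $K^{\circ} = \{x \in Y : 0 \prec' x\}$, so by (S3) applied to $\prec'$ we have $x \prec' y$ iff $y - x \in K^{\circ}$ iff $x \prec y$.

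I expect the only mildly tricky point to be the openness of $U$ in the forward direction, since it requires recognizing (S5) as exactly the sequential openness condition of Definition~\ref{df:OpenClosedSet}(a) applied with a constant comparator sequence; everything else is a clean bookkeeping exercise in the axioms (S1)--(S5) and the three structural properties of $K^{\circ}$.
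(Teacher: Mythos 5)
Your proposal is correct and follows essentially the same route as the paper: the forward direction identifies $\{x : x \succ 0\}$ as the interior of $K$ via the uniqueness clause of Theorem~\ref{thm:SolidCone-Criterion}, the converse verifies (S1)--(S5) directly from the three structural properties of $K^{\circ}$ and its openness, and uniqueness is deduced from the characterization \eqref{eq:interior} together with (S3). The only cosmetic differences are that you also check asymmetry and transitivity explicitly (the paper leaves these to follow from irreflexivity, (S1) and (S2)) and a harmless slot-naming slip in your application of (S5) for openness.
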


\begin{proof}
\emph{First part}.
Suppose a relation $\prec$ is a strict vector ordering on $Y$.
We shall prove that the set ${K^{\circ}}$ defined by \eqref{eq:interior} is a nonempty open subset of $K$ which satisfies conditions (i)-(iii) of Theorem~\ref{thm:SolidCone-Criterion}. Then it follows from the second part of Theorem 3.3 that ${K^{\circ}}$ is the interior of $K$ and that $K$ is a solid cone. By the definition of strict ordering, it follows that the relation $\prec$ is nonempty. Therefore, there are at least two vectors $a$ and $b$ in $Y$ such that ${a \prec b}$. From (S3), we obtain ${b-a \succ 0}$. 
Therefore, ${b-a \in K^{\circ}}$ which proves that ${K^{\circ}}$ is nonempty. 
Now let ${x_n \rightarrow x}$ and ${x \in K^{\circ}}$. By the definition of ${K^{\circ}}$, we get ${x \succ 0}$. Then by (S5), we conclude that ${x_n \succ 0}$ for all but finitely many $n$ which means that ${K^{\circ}}$ is open. Conditions (i) and (ii) of
Theorem~\ref{thm:SolidCone-Criterion} follow immediately from (S4) and (S10) respectively.
It remains to prove that ${\, 0 \notin K^{\circ}}$. 
Assume the contrary, that is ${\, 0 \in K^{\circ}}$. By the definition of ${K^{\circ}}$, we get $0 \succ 0$ which is a contradiction since the relation $\prec$ is irreflexive.    

\emph{Second part}.
Let $K$ be a solid cone and ${K^{\circ}}$ be its interior. Note that according to to the first part of Theorem~\ref{thm:SolidCone-Criterion}, ${K^{\circ}}$ has the following properties: 	${\lambda K^{\circ} \subset K^{\circ}}$ for any ${\lambda > 0}$,
${K + K^{\circ} \subset K^{\circ}}$ and ${0 \notin K^{\circ}}$.
  We have to prove that the relation $\prec$ defined by \eqref{eq:Interior-StrictVectorOrdering} is a strict vector ordering. First we shall show that $\prec$ is nonempty and irreflexive. Since $K$ is solid, ${K^{\circ}}$ is nonempty and nontrivial. Hence, there exists a vector 
${c \in K^{\circ}}$ such that ${c \neq 0}$. Now by the definition of $\prec$, we get 
${0 \prec c}$ which means that $\prec$ is nonempty. To prove that $\prec$ is irreflexive assume the contrary. Then there exists a vector ${x \in Y}$ such that ${x \prec x}$. 
Hence, ${0 = x-x \in K^{\circ}}$ which is a contradiction since ${0 \notin K^{\circ}}$. 
Now we shall show that $\prec$ satisfies properties (S1)--(S5).

(S1) Let $x{ \prec y}$. Using the definition \eqref{eq:Interior-StrictVectorOrdering}, the inclusion ${K^{\circ} \subset K}$, and the definition of the positive cone $K$, we have
\[
x \prec y \, \Rightarrow \, 
y-x \in K^{\circ} \, \Rightarrow \, 
y-x \in K \, \Rightarrow \, 
y-x \succeq 0 \, \Rightarrow \, x \preceq y .  
\]

(S2) Let ${x \preceq y}$ and ${y \prec z}$. Using the definition of the positive cone $K$, the definition \eqref{eq:Interior-StrictVectorOrdering} and the inclusion 
${K + K^{\circ} \subset K^{\circ}}$, we get  
\[
x \preceq y \text{ and } y \prec z \, \Rightarrow \,
y-x \in K \text{ and } z-y \in K^{\circ} \, \Rightarrow \, 
z-x \in K^{\circ} \, \Rightarrow \, x \prec z . 
\]

(S3) follows immediately from the definition \eqref{eq:Interior-StrictVectorOrdering}.
 
(S4) Let ${x \preceq y}$ and ${\lambda > 0}$. 
Using the definition \eqref{eq:Interior-StrictVectorOrdering} and the inclusion 
${\lambda K^{\circ} \subset K^{\circ}}$, we obtain 
\[
x \prec y \, \Rightarrow \, 
y-x \in K^{\circ} \, \Rightarrow \,
\lambda (y-x) \in K^{\circ} \, \Rightarrow \, 
\lambda y - \lambda x \in K^{\circ} \, \Rightarrow \,
\lambda x \prec \lambda y .
\]

(S5) Let ${x_n \rightarrow x}$, ${y_n \rightarrow y}$ and ${x \prec y}$. This yields ${y_n - x_n \rightarrow y - x}$
and ${y-x \in K^{\circ}}$. Since ${K^{\circ}}$ is open, we conclude that 
${y_n - x_n \in K^{\circ}}$ for all but finitely many $n$. 
Hence, ${x_n \prec y_n}$ for all but finitely many $n$.

\emph{Uniqueness}.
Now we shall prove the uniqueness of the strict vector ordering on $Y$. Assume that $\prec$ and $<$ are 
two vector orderings on $Y$. It follows from the first part of the theorem
that 
\[
K^{\circ} = \{ x \in Y : x \succ 0 \} = \{ x \in Y : x > 0 \}.
\]
From this and (S3), we get for all ${x,y \in Y}$,
\[
x \prec y \, \Leftrightarrow \, 
y-x \succ 0 \, \Leftrightarrow \,
y-x \in K^{\circ} \, \Leftrightarrow \,
y-x > 0 \, \Leftrightarrow \,
x < y
\]
which means that relations $\prec$ and $<$ are equal. 
\end{proof}

Note that property (S13) shows that every finite set in a solid vector space is bounded. Property (S12) shows that every finite set consisting of strictly positive vectors in a solid vector space is bounded below by a positive vector. 

The following assertion is an immediate consequence of Theorem~\ref{thm:StrictVectorOrdering-SolidCone}.

\begin{cor} \label{cor:SolidVectorSpace=StrictVectorOrdering}
Let ${(Y,\preceq,\rightarrow)}$ be an ordered vector space. Then the following statements are equivalent.
\begin{enumerate}
	\item $Y$ is a solid vector space.
	\item $Y$ can be equipped with a strict vector ordering.
\end{enumerate}
\end{cor}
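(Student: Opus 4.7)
The plan is to derive the corollary as a direct two-way translation of Theorem~\ref{thm:StrictVectorOrdering-SolidCone}, since that theorem already packages both directions of the equivalence; only the correct bookkeeping of which half supplies which implication needs to be spelled out.

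For the implication (i) $\Rightarrow$ (ii), I will assume that $Y$ is solid, so by Definition~\ref{df:SolidSpace} its positive cone $K = \{x \in Y : x \succeq 0\}$ is a solid cone with nonempty interior $K^{\circ}$. Invoking the second part of Theorem~\ref{thm:StrictVectorOrdering-SolidCone}, I will then define the relation $\prec$ on $Y$ by $x \prec y$ iff $y - x \in K^{\circ}$; the theorem asserts this is a (unique) strict vector ordering on $Y$, so $Y$ is equipped with one, as required.

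For the converse (ii) $\Rightarrow$ (i), I will assume $Y$ carries some strict vector ordering $\prec$ compatible with $\preceq$. The first part of Theorem~\ref{thm:StrictVectorOrdering-SolidCone} then tells me that the positive cone $K$ is a solid cone whose interior is exactly $\{x \in Y : x \succ 0\}$; in particular $K^{\circ} \neq \varnothing$, so by Definition~\ref{df:SolidSpace} the space $(Y,\preceq,\rightarrow)$ is solid.

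There is no real obstacle here: both implications are immediate consequences of the corresponding halves of the theorem just proved, and the only thing to be careful about is to cite Theorem~\ref{thm:StrictVectorOrdering-SolidCone} by the correct part in each direction and to note explicitly that the strict ordering produced in the forward direction is the one canonically given by $K^{\circ}$. The write-up should therefore be short, essentially two sentences, and can even be left to the reader, as the author has done.
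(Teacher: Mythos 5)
Your proposal is correct and matches the paper exactly: the paper states that the corollary ``is an immediate consequence of Theorem~\ref{thm:StrictVectorOrdering-SolidCone}'' and gives no further argument, and your two-way bookkeeping (second part of the theorem for (i)$\Rightarrow$(ii), first part for (ii)$\Rightarrow$(i)) is precisely the intended reading.
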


\begin{rem}
The strict ordering $\prec$ defined by \eqref{eq:Interior-StrictVectorOrdering} 
was first introduced in 1948 by Krein and Rutman \cite[p.~8]{KR48} in the case when $K$ is a solid cone in a Banach space $Y$. In this case they proved that $\prec$ satisfies axioms {(S1)--(S4)}.
\end{rem}

In conclusion of this section we present three examples of solid vector spaces 
We end the section with a remark which shows that axiom (S5) plays an important role in the definition of strict vector ordering.

\begin{exmp} \label{exmp:R^n}
Let ${Y = \Rset^n}$ with $\rightarrow$ the coordinate-wise convergence, 
and with coordinate-wise ordering defined by
\begin{eqnarray*}
& x \preceq y & \, \text{if and only if} \quad x_i \le y_i \text{ for each } 
i = 1,\dots,n ,\\
& x \prec y & \, \text{if and only if} \quad x_i < y_i \text{ for each } 
i = 1,\dots,n .
\end{eqnarray*}
Then ${(Y,\preceq,\prec,\rightarrow)}$ is a solid vector space.
This space is normal and regular.
\end{exmp}

\begin{exmp} \label{exmp:C[0,1]}
Let ${Y = C[0,1]}$ with the max-norm ${\|\, . \,\|_{\infty}}$.
Define the pointwise ordering $\preceq$ and $\prec$ on $Y$ by means of 
\begin{eqnarray*}
& x \preceq y & \, \text{if and only if} \quad x(t) \le y(t) \text{ for each } t \in [0,1],\\
& x \prec y & \, \text{if and only if} \quad x(t) < y(t) \text{ for each } t \in [0,1].
\end{eqnarray*}
Then ${(Y,\|\, . \,\|_{\infty},\preceq,\prec)}$ is a solid Banach space.
This space is normal but nonregular. Consider, for example, the sequence ${(x_n)}$ in $Y$ defined by 
${x_n(t) = t^n}$.
We have ${x_1 \succeq x_2 \succeq \cdots \succeq 0}$ but ${(x_n)}$ is not convergent in $Y$.
\end{exmp}

\begin{exmp} \label{exmp:C^1[0,1]}
Let ${Y = C^1[0,1]}$ with the norm ${\|x\| = \|x\|_{\infty} + \|x'\|_{\infty}}$.
Define the ordering $\preceq$ and $\prec$ as in Example~\ref{exmp:C[0,1]}.
Then ${(Y,\|\, .\,\|, \preceq,\prec)}$ is is a solid Banach space.
The space $Y$ is not normal. Consider, for example, the sequences ${(x_n)}$ and ${(y_n)}$ in $Y$ defined by ${x_n(t) = \frac{t^n}{n}}$ and ${y_n(t) = \frac{1}{n}}$.
It is easy to see that ${0 \preceq x_n \preceq y_n}$ for all $n$, ${y_n \rightarrow 0}$ and ${x_n \not\rightarrow 0}$.
\end{exmp}

\begin{rem}
Let ${(Y,\preceq,\rightarrow)}$ be an arbitrary ordered vector space. Then the relation $\prec$ on $Y$ defined by
\begin{equation} \label{eq:StrictOrdering}
x \prec y \quad \text{if and only if} \quad x \preceq y \text{ and } x \neq y
\end{equation}
is a strict ordering on $Y$ and it always satisfies axioms {(S1)--(S4)} and 
properties {(S5)--(S10)}. 
However, it is not in general a strict vector ordering on $Y$. 
For example, from the uniqueness of strict vector ordering (Theorem~\ref{thm:StrictVectorOrdering-SolidCone}) it follows that $\prec$ defined by \eqref{eq:StrictOrdering} is not a strict vector ordering in the ordered vector spaces defined in Examples {\ref{exmp:R^n}--\ref{exmp:C^1[0,1]}}. 
\end{rem}

\section{Order topology on solid vector spaces}
\label{sec:OrderTopologyOnSolidVectorSpaces}

In this section, we show that every solid vector space can be endowed with 
an order topology $\tau$ and that 
${x_n \rightarrow x}$ implies ${x_n \stackrel{\tau}{\rightarrow} x}$. 
As a consequence we show that every convergent sequence in a solid vector space has a unique limit.

\begin{defn}
Let ${(Y,\preceq,\prec,\rightarrow)}$ be a solid vector space, and let ${a,b \in Y}$ be two vectors with ${a \prec b}$.
Then the set ${(a,b) = \{x \in Y : a \prec x \prec b\}}$
is called an \emph{open interval} in $Y$.
\end{defn}

It is easy to see that every open interval in $Y$ is an infinite set. Indeed, one can prove that
${a + \lambda \, (b - a) \in (a,b)}$ for all $\lambda \in \Rset$ with $0 < \lambda < 1$.

\begin{thm} \label{thm:OrderTopology1}
Let ${(Y,\preceq,\prec,\rightarrow)}$ be a solid vector space. Then the collection $\cal{B}$ of all open intervals in $Y$ is a basis for a Hausdorff topology $\tau$ on $Y$.
\end{thm}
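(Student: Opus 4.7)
The plan is to verify the two basis conditions for $\mathcal{B}$, namely (B1) that $\mathcal{B}$ covers $Y$ and (B2) that the intersection of two members of $\mathcal{B}$ contains, around each of its points, another member of $\mathcal{B}$, and then (B3) that the resulting topology separates points. Each step reduces to a direct application of one of the properties (S1)--(S14) of the strict vector ordering, together with transitivity of $\prec$.

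For (B1), I would fix a vector $b \succ 0$ (which exists because $K$ is a solid cone, so $K^{\circ}$ is nonempty and $0 \notin K^{\circ}$) and, given an arbitrary $x \in Y$, invoke property (S14) to produce $\lambda > 0$ with $-\lambda b \prec x \prec \lambda b$. This places $x$ inside the basic set $(-\lambda b, \lambda b)$.

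For (B2), suppose $x \in (a_1,b_1) \cap (a_2,b_2)$. The four differences $x-a_1$, $b_1-x$, $x-a_2$, $b_2-x$ are all strictly positive, so property (S12) furnishes a single vector $p \succ 0$ with $p \prec x-a_i$ and $p \prec b_i-x$ for $i=1,2$. Rearranging these four strict inequalities gives $a_i \prec x-p$ and $x+p \prec b_i$; combining with the transitivity of $\prec$ shows that the open interval $(x-p, x+p)$ contains $x$ and lies inside $(a_1,b_1) \cap (a_2,b_2)$.

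For (B3), given distinct $x, y$ in $Y$, the crucial step is to find $c \succ 0$ with $y-x \notin (-c,c)$. If no such $c$ existed, then for every $c \succ 0$ we would have both $y-x \prec c$ and $-(y-x) \prec c$, and property (S11) would force $y-x \preceq 0$ and $y-x \succeq 0$, i.e.\ $y=x$. Once such a $c$ is in hand, set $p = c/2$, which is $\succ 0$ by (S4), and take $U = (x-p,x+p)$ and $V=(y-p,y+p)$. Any common point $w \in U \cap V$ would yield $y-p \prec w \prec x+p$ (hence $y-x \prec c$) and $x-p \prec w \prec y+p$ (hence $-c \prec y-x$), placing $y-x$ in $(-c,c)$ and contradicting the choice of $c$. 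I expect this Hausdorff step to be the main obstacle, since it is the only place where the order structure must be shown to be sharp enough to distinguish any two distinct points; the verification of (B1) and (B2) is essentially a formal manipulation of the axioms, whereas the separation argument relies essentially on (S11), which is the one place where the convergence relation $\rightarrow$ enters the proof.
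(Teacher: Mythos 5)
Your proposal is correct and follows essentially the same route as the paper: the covering condition is immediate, the intersection condition rests on (S12) applied to the four strictly positive differences, and the Hausdorff property rests on (S11) together with the halving trick $p = c/2$. The only cosmetic difference is that you organize the separation step as a direct construction of a separating $c$ (with a small contradiction sub-argument), whereas the paper runs the whole Hausdorff step as one global proof by contradiction; the underlying inequalities are identical.
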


\begin{proof}
One has to prove that $\cal{B}$ satisfies the requirements for a basis. First, note that every vector $x$ of $Y$ lies in at least one element of $\cal{B}$. Indeed, ${x \in (x-c, x+c)}$ for each vector ${c \succ 0}$.
Second, note that the intersection of any two open intervals contains another open interval, or is empty. Suppose  ${(a_1,b_1)}$ and ${(a_2,b_2)}$ are two elements of $\cal{B}$ and a vector $x$ lies in their intersection. 
Then ${b_i - x \succ 0}$ and ${x - a_i \succ 0}$ for ${i = 1, 2}$. It follows from (S12) that there exists a vector ${c \succ 0}$ such that 
${c \prec b_i - x}$ and ${c \prec x - a_i}$ for ${i = 1, 2}$. 
Hence, ${a_i \prec x-c}$ and ${x+c \prec b_i}$ for ${i = 1, 2}$. This implies that
${(x-c,x+c) \subset (a_1,b_1) \cap (a_2,b_2)}$.

It remains to show that the topology $\tau$ is Hausdorff. 
We shall prove that for all ${x,y \in X}$ with ${x \neq y}$ there exists ${c \succ 0}$ such that the intersection of the intervals ${(x-c,x+c)}$ and ${(y-c,y+c)}$ is empty. Assume the contrary. 
Then there exists 
${x,y \in X}$ with ${x \neq y}$ such that for every ${c \succ 0}$ the intersection of ${(x-c,x+c)}$ and ${(y-c,y+c)}$ is nonempty. Now let ${c \succ 0}$ be fixed. 
Hence, there is a vector $z \in Y$ satisfying ${x-c \prec z \prec x+c}$ and ${y-c \prec z \prec y+c}$. 
Therefore, ${-c \prec x - z \prec c}$ and ${-c \prec z - y \prec c}$.
Using (S10), we get
${-2c \prec x - y \prec 2c}$. Applying these inequalities to ${\frac{1}{2} c}$, we conclude that ${x - y \prec c}$ and ${y - x \prec c}$ for each ${c \succ 0}$.
Now it follows from (S11) that ${x \preceq y}$ and ${y \preceq x}$ which is a contradiction since  ${x \neq y}$. 
\end{proof}

Thanks to Theorem~\ref{thm:OrderTopology1} we can give the following definition.

\begin{defn} \label{df:OrderTopology}
Let ${(Y,\preceq,\prec,\rightarrow)}$ be a solid vector space. The topology $\tau$ on $Y$ with  basis formed by open intervals in $Y$ is called the \emph{order topology} on $Y$.
\end{defn}

\begin{rem} \label{rem:OrderTopology}
Let ${(Y,\preceq,\prec,\rightarrow)}$ be a solid vector space. 
It follows from the proof of Theorem~\ref{thm:OrderTopology1} that the collection
\[
{\cal{B'}} = \{(x-c,x+c) : x,c \in Y, c \succ 0\}
\]
is also a basis for the order topology $\tau$ on $Y$.
\end{rem}

\begin{thm} \label{thm:OrderTopology2}
Let ${(Y,\preceq,\prec,\rightarrow)}$ be a solid vector space and let $\tau$ be the order topology on $Y$. Then:
\begin{enumerate}
	\item For a sequence ${(x_n)}$ in $Y$, ${x_n \stackrel{\tau}{\rightarrow} x}$ 
	if and only if for every 	${c \succ 0}$ there exists ${N \in \Nset}$ such that
	${x - c \prec x_n \prec x - c}$ for all ${n > N}$.
	\item For a sequence ${(x_n)}$ in $Y$, ${x_n \rightarrow x}$ implies 
	${x_n \stackrel{\tau}{\rightarrow} x}$.
\end{enumerate}
\end{thm}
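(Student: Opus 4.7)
The plan for part (i) is to show that the symmetric intervals $\{(x-c, x+c) : c \succ 0\}$ form a neighborhood basis at $x$ for the order topology $\tau$. The forward direction is then immediate: each $(x-c, x+c)$ is a basic $\tau$-open set (by Remark~\ref{rem:OrderTopology}) containing $x$, so $\tau$-convergence forces $x_n \in (x-c, x+c)$ eventually, which is exactly the stated inequality.

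For the reverse direction in (i), I would let $U$ be an arbitrary $\tau$-open neighborhood of $x$. By Theorem~\ref{thm:OrderTopology1} there is a basic open interval $(a,b) \subset U$ with $a \prec x \prec b$. Applying (S3) gives $x-a \succ 0$ and $b-x \succ 0$, and then property (S12) applied to the finite set $\{x-a, b-x\}$ yields some $c \succ 0$ satisfying $c \prec x-a$ and $c \prec b-x$. Rearranging via (S3) turns these into $a \prec x-c$ and $x+c \prec b$, so $(x-c,x+c) \subset (a,b) \subset U$. The hypothesis then places $x_n$ in $(x-c,x+c) \subset U$ for all but finitely many $n$, which is the required $\tau$-convergence.

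For part (ii), assume $x_n \rightarrow x$ and fix $c \succ 0$. By (C4) the constant sequence $y_n = x-c$ converges to $x-c$, and since $x-c \prec x$, axiom (S5) applied to $(y_n)$ and $(x_n)$ yields $x-c \prec x_n$ for all but finitely many $n$. The same reasoning with the constant sequence $z_n = x+c$ (using $x \prec x+c$) gives $x_n \prec x+c$ for all but finitely many $n$. Taking $N$ to be the larger of the two thresholds, both inequalities hold for all $n > N$, which by part (i) is exactly $x_n \stackrel{\tau}{\rightarrow} x$.

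There is essentially no technical obstacle here; the only subtle point is the reverse direction of (i), where a general basic neighborhood of $x$ must be replaced by a symmetric one. This is precisely what property (S12) was designed to provide, and once the symmetric basis at $x$ is in hand, part (ii) reduces to two applications of the ``converse passage to the limit'' axiom (S5) against constant sequences.
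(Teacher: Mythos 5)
Your proposal is correct and follows essentially the same route as the paper: part (i) is the statement that the symmetric intervals $(x-c,x+c)$ form a (neighborhood) basis for $\tau$, which the paper delegates to Remark~\ref{rem:OrderTopology} and you justify directly via (S12) exactly as in the proof of Theorem~\ref{thm:OrderTopology1}; part (ii) is the same two-fold application of (S5) against constant sequences that the paper performs on an arbitrary basic interval $(a,b)$ containing $x$. The only difference is expository detail, not substance.
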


\begin{proof}
The first claim follows from Remark~\ref{rem:OrderTopology}. Let ${x_n \rightarrow x}$ and ${(a,b)}$ be a neighborhood of $x$. From ${a \prec x \prec b}$ and (S5), we conclude that
${x_n \in (a,b)}$ for for all but finitely many $n$. 
Hence, ${x_n \stackrel{\tau}{\rightarrow} x}$ which proves the second claim.  
\end{proof}

At the end of the next section we shall prove that the converse of the statement (ii) of Theorem~\ref{thm:OrderTopology2} holds true if and only if $Y$ is normal.

\begin{thm} \label{thm:ConvergenceSolidVectorSpace-UniqueLimit}
If ${(Y,\preceq,\prec,\rightarrow)}$ is a solid vector space, then the convergence on $Y$ has the following properties.
\begin{description}
	\item {\rm{(C6)}} Each convergent sequence in $Y$ has a unique limit.
	\item {\rm{(C7)}} Each convergent sequence in $Y$ is bounded.
\end{description}
\end{thm}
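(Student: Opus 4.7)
The plan is to prove the two claims separately, reusing the machinery already assembled in Sections~\ref{sec:StrictVectorOrderingsAndSolidCones} and~\ref{sec:OrderTopologyOnSolidVectorSpaces}: the strict vector ordering $\prec$ associated to the solid cone, property (S5$'$), the boundedness property (S13), and the order topology $\tau$.

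For (C6), the cleanest route is via the order topology. By Theorem~\ref{thm:OrderTopology1}, $\tau$ is Hausdorff, and by Theorem~\ref{thm:OrderTopology2}(ii) every $\rightarrow$-convergent sequence is also $\tau$-convergent to the same limit. So if $(x_n)$ had two $\rightarrow$-limits $x$ and $y$, it would have two $\tau$-limits, forcing $x = y$ by the Hausdorff property. (An alternative direct argument: from $x_n \rightarrow x$ and $x_n \rightarrow y$, axioms (C1) and (C2) yield $x_n - x \rightarrow 0$ and $y - x_n \rightarrow 0$; applying (S5$'$) twice for an arbitrary $c \succ 0$ gives an $n$ with both $x_n - x \prec c/2$ and $y - x_n \prec c/2$, and (S10) produces $y - x \prec c$ and symmetrically $x - y \prec c$; then (S11) forces $x - y \preceq 0$ and $y - x \preceq 0$, hence $x = y$.)

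For (C7), the plan is to split the sequence into a tail and a finite initial segment and bound both along a common direction. Fix any $b \succ 0$, which exists because the positive cone of a solid vector space has nonempty interior. Since $x_n \rightarrow x$, axioms (C1), (C2), and (C4) give $x_n - x \rightarrow 0$ and $x - x_n \rightarrow 0$, so (S5$'$) yields $N$ with $-b \prec x_n - x \prec b$ for every $n > N$. For the finitely many remaining terms, apply the ``moreover'' clause of (S13) to the finite set $\{x_1 - x, \dots, x_N - x\}$ with the fixed direction $b$: there is $\lambda > 0$ such that $-\lambda b \prec x_n - x \prec \lambda b$ for $n \le N$. Taking $\lambda \ge 1$ without loss of generality, the inclusion $(\lambda - 1) b \in K$ gives $b \preceq \lambda b$ and $-\lambda b \preceq -b$, so $-\lambda b \preceq x_n - x \preceq \lambda b$ holds for every $n$. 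Translating by $x$ and invoking (S1) shows $x - \lambda b \preceq x_n \preceq x + \lambda b$, i.e.\ $(x_n)$ is bounded.

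The only nontrivial ingredient is the ``moreover'' clause of (S13), which lets a single direction $b$ bound a finite set. That step has already been carried out in the proof of (S13), so the obstacle here is purely organizational, not technical; everything else is a direct unwinding of the axioms.
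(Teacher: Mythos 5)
Your proof is correct and follows essentially the same route as the paper: (C6) via the Hausdorff order topology (Theorems \ref{thm:OrderTopology1} and \ref{thm:OrderTopology2}), and (C7) by bounding the tail using convergence and the finite initial segment using (S13). The paper's (C7) is marginally slicker---it passes to $\tau$-convergence, traps the tail in an open interval $(a,b)$, and applies (S13) once to the finite set $\{a,b,x_1,\dots,x_N\}$, which avoids your ``moreover'' clause and the $\lambda \ge 1$ normalization---but the ingredients are identical.
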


\begin{proof}
(C6) Let $(x_n)$ be a convergent sequence in $Y$.
Assume that there are ${x,y \in Y}$ such that ${x_n \rightarrow x}$ and 
${x_n \rightarrow y}$.
It follows from Theorem~\ref{thm:OrderTopology2} that 
${x_n \stackrel{\tau}{\rightarrow} x}$ and ${x_n \stackrel{\tau}{\rightarrow} y}$.
According to Theorem~\ref{thm:OrderTopology1} the topology $\tau$ is Hausdorff.  
Now by the uniqueness of the limit of a convergent sequence in the Hausdorff topological space 
${(Y,\tau)}$, we conclude that ${x=y}$.

(C7) Let $(x_n)$ be a convergent sequence in $Y$ and ${x_n \rightarrow x}$.
By Theorem~\ref{thm:OrderTopology2}, ${x_n \stackrel{\tau}{\rightarrow} x}$. 
Choose an open interval ${(a,b)}$ which contains $x$. 
Then there exists a natural number $N$ such that ${x_n \in (a,b)}$ for all $n \ge N$. 
According to (S13), the set ${\{a,b,x_1,\ldots,x_N\}}$ is bounded in $Y$
which proves that ${(x_n)}$ is bounded. 
\end{proof}

\section{Minkowski functional on solid vector spaces}
\label{sec:MinkowskiFunctionalOnSolidVectorSpaces}

In this section, using the Minkowski functional, we prove that the order topology on every solid vector space is normable. Also we show that every normal and solid vector space $Y$ is normable
in the sense that there exists a norm ${\|\, \, . \, \|}$ on $Y$ such that ${x_n \rightarrow x}$ if and only if 
${x_n \stackrel{\|.\,\|}{\rightarrow} x}$. 
Finally, we give a criterion for a normal vector space and show that the convergence of a sequence in normal and solid vector space has the properties of the convergence in $\Rset$.
This last result shows that the Sandwich theorem plays an important role in solid vector spaces. 

\begin{defn}
Let $Y$ be a real vector space. 
A subset $A$ of $Y$ is called:
\begin{description}
	\item (a) \emph{absorbing}, if for all ${x \in Y}$ there exists $\lambda > 0$ such that ${x \in \lambda \, A}$;
	\item (b) \emph{balanced}, if ${\lambda \, A \subset A}$ for every $\lambda \in \Rset$ with 	${|\,\lambda\,| \le 1}$;
	\end{description}
\end{defn}

\begin{defn} \label{df:MinkowskiFunctional}
Let $Y$ be a real vector space and ${A \subset Y}$ an absorbing set.
Then the functional ${\|\, \, . \, \| \, \colon Y \to \Rset}$ defined by
\begin{equation} \label{eq:MinkowskiFunctional-inf}
\|\,x\,\| = \inf \{ \lambda \ge 0 : x \in \lambda \, A \}.
\end{equation}
is called the \emph{Minkowski functional} of $A$.
\end{defn}

It is well known (see, e.g. \cite[Theorem 1.35]{Rud73}) that the Minkowski functional of every absorbing, convex and balanced subset $A$ of a vector space $Y$ is a seminorm on $Y$. 
    
\begin{lem} \label{lem:MinkowskiFunctional1}
Let ${(Y,\preceq,\rightarrow)}$ be an ordered vector space, and let ${A \subset Y}$ be an absorbing, convex, balanced and bounded set.
Then the Minkowski functional ${\|\, \, . \, \| \, \colon Y \to \Rset}$ of $A$ is a norm on $Y$. Moreover, if $A$ is closed, then
	\begin{equation} \label{eq:MinkowskiFunctional-min1}
	\|\,x\,\| = \min \{ \lambda \ge 0 : x \in \lambda \, A \}.
	\end{equation}
\end{lem}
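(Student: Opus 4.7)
The plan is to reduce to the cited seminorm result and then use boundedness together with axiom (V3) to promote the seminorm to a norm, and finally use closedness plus axiom (C3) to attain the infimum.

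\textbf{Step 1 (seminorm).} The hypotheses that $A$ is absorbing, convex, and balanced are exactly those of the Rudin result cited just before the lemma, so $\|\,\cdot\,\|$ is automatically a seminorm on $Y$. In particular $\|\,0\,\| = 0$, $\|\,\lambda x\,\| = |\lambda|\,\|\,x\,\|$, and the triangle inequality hold for free, and also $0 \in A$ (take $\lambda = 0$ in the definition of balanced). Thus the only remaining task for the first assertion is the implication $\|\,x\,\| = 0 \Rightarrow x = 0$.

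\textbf{Step 2 (definiteness via boundedness).} Suppose $\|\,x\,\| = 0$. By definition of the infimum, I can pick a sequence of real numbers $\lambda_n \ge 0$ with $\lambda_n \to 0$ such that $x \in \lambda_n A$, i.e.\ $x = \lambda_n a_n$ for some $a_n \in A$. Since $A$ is bounded, there exist $u,v \in Y$ with $u \preceq a_n \preceq v$ for all $n$; multiplying by $\lambda_n \ge 0$ (using (V2)) gives
\[
\lambda_n u \;\preceq\; x \;\preceq\; \lambda_n v \qquad \text{for all } n.
\]
By axiom (C3), $\lambda_n u \to 0$ and $\lambda_n v \to 0$. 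Applying axiom (V3) to the pairs $(\lambda_n u, x)$ and $(x, \lambda_n v)$ (viewing $x$ as a constant sequence, which converges to $x$ by (C4)) yields $0 \preceq x$ and $x \preceq 0$. Antisymmetry of $\preceq$ then forces $x = 0$, so $\|\,\cdot\,\|$ is a norm. This is the one step that genuinely uses the order structure, and it is the main obstacle: one must translate the order-theoretic notion of boundedness from Definition~\ref{df:Bounded/Increasing} into an estimate controlling $x$, and this is exactly what $u \preceq a_n \preceq v$ combined with (V2) and (V3) achieves.

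\textbf{Step 3 (attainment when $A$ is closed).} Set $\lambda = \|\,x\,\|$ and choose $\lambda_n \ge 0$ with $\lambda_n \downarrow \lambda$ and $x \in \lambda_n A$, say $x = \lambda_n a_n$ with $a_n \in A$. If $\lambda = 0$, then by Step~2 we have $x = 0$, and since $0 \in A$ (balancedness) we get $x = 0 \in 0 \cdot A$, i.e.\ $x \in \lambda A$. If $\lambda > 0$, then eventually $\lambda_n > 0$ and $a_n = \lambda_n^{-1}\, x$; since $\lambda_n^{-1} \to \lambda^{-1}$ in $\Rset$, axiom (C3) gives $a_n \to \lambda^{-1}\, x$. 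The set $A$ is closed, so $\lambda^{-1}\, x \in A$, i.e.\ $x \in \lambda A$. Either way the infimum in \eqref{eq:MinkowskiFunctional-inf} is attained, which is exactly \eqref{eq:MinkowskiFunctional-min1}.
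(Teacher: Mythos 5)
Your proposal is correct and follows essentially the same route as the paper's proof: reduce to the Rudin seminorm result, use boundedness of $A$ together with (V2), (C3) and (V3) to sandwich $x$ between two null sequences and conclude definiteness, and use closedness of $A$ with (C3) applied to $a_n=\lambda_n^{-1}x$ to show the infimum is attained. The only cosmetic difference is that you extract a sequence $\lambda_n\in B_x$ decreasing to the infimum directly, where the paper first observes that $B_x$ is upward closed (via balancedness) and works with $\lambda+\tfrac{1}{n}$; the two devices are interchangeable.
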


\begin{proof}
Let $x \in Y$ be fixed and let ${B_x = \{ \lambda \ge 0 : x \in \lambda \, A \}}$. Since $A$ is absorbing, $B_x$ is nonempty. Since $A$ is balanced, ${\alpha \in B_x}$ and ${\alpha < \beta}$ imply ${\beta \in B_x}$. Let
${\inf {B_x} = \lambda}$.
By the definition of $\inf B_x$, for every $n \in \Nset$ there exists ${\alpha \in B}$ such that 
${\alpha < \lambda + \frac{1}{n}}$. Hence, 
${\lambda + \frac{1}{n} \in B_x}$ which means that 
${x \in \left( \lambda + \frac{1}{n} \right) A}$. 

Now we are ready to prove the ${\|\, \, . \, \|}$ is indeed a norm on $Y$. 
Since the Minkowski functional of $A$ is seminorm, we have only to prove that ${\|\,x\,\| = 0}$ implies ${x = 0}$. 
Let $x$ be a vector in $Y$ such that ${\|\,x\,\| = 0}$. In the case $\lambda =0$ the inclusion 
${x \in \left( \lambda + \frac{1}{n} \right) A}$ reduces to ${x \in \frac{1}{n} A}$. 
Since $A$ is bounded, there is an interval $[a,b]$ containing $A$. 
Hence, ${\frac{1}{n} \, a \preceq x \preceq \frac{1}{n} \, b}$ for all $n \in \Nset$. 
According to (C3), ${\frac{1}{n} \, a \rightarrow 0}$ and ${\frac{1}{n} \, b \rightarrow 0}$.
Hence, applying (V3) we conclude that ${0 \preceq x \preceq 0}$ which means that ${x=0}$. 

Now we shall prove \eqref{eq:MinkowskiFunctional-min1} provided that $A$ is closed.
We have to prove that ${\lambda = \inf B_x}$ belongs to $B_x$. 
If ${\lambda = 0}$, then ${x=0}$ which implies that ${\lambda \in B_x}$.
Now let ${\lambda \neq 0}$. 
The inclusion ${x \in \left( \lambda + \frac{1}{n} \right) A}$ implies that the sequence ${(x_n)}$ defined by 
${x_n = \left( \lambda + \frac{1}{n} \right)^{-1} x}$ lies in $A$. 
According to (C3), ${x_n \rightarrow \lambda^{-1} x}$ which implies
${x \lambda^{-1} \in A}$ since $A$ is closed. Hence, $x \in \lambda \, A$ which proves that 
${\lambda \in B_x}$.
\end{proof}

\begin{defn}
Let ${(Y,\preceq,\rightarrow)}$ be an ordered vector space, and let ${a,b \in Y}$ be two vectors with ${a \preceq b}$.
Then the set
${[a,b] = \{x \in Y : a \preceq x \preceq b\}}$
is called a \emph{closed interval} in $Y$.
\end{defn}

Obviously, every closed interval ${[-b,b]}$ in an ordered vector space $Y$ is a convex, balanced, closed and bounded set.
It follows from (S14) that ${[-b,b]}$ is also an absorbing set provided that $Y$ is a solid vector space and $b \succ 0$.

\begin{defn}
Let ${(Y,\preceq,\rightarrow)}$ be an ordered vector space.
A norm ${\|\, \, . \, \|}$ on $Y$ is called:
\begin{description}
	\item (a) \emph{monotone} if ${\|\,x\,\| \le \|\,y\,\|}$ 
	whenever ${0 \preceq x \preceq y}$.
	\item (b) \emph{semimonotone} if there exists a constant ${K > 0}$ such that 
	${\|\,x\,\| \le K \,\|\,y\,\|}$ 
	whenever ${0 \preceq x \preceq y}$. 
	\end{description}
\end{defn}

\begin{lem} \label{lem:MinkowskiFunctional-SolidCone}
Let ${(Y,\preceq,\prec,\rightarrow)}$ be a solid vector space. 
Let ${\|\, \, . \, \| \, \colon Y \to \Rset}$ be the Minkowski functional of ${[-b,b]}$ 
for some vector $b$ in $Y$ with ${b \succ 0}$. Then:
\begin{enumerate}
	\item ${\|\, \, . \, \|}$ is a monotone norm on $Y$ which can be defined by
	\begin{equation} \label{eq:MinkowskiFunctional-min2}
	\|\,x\,\| = \min \{ \lambda \ge 0 : -\lambda \, b \preceq x \preceq \lambda \, b \}.
	\end{equation}
 	\item  For ${x \in Y}$ and ${\varepsilon > 0}$,
	\begin{equation} \label{eq:MinkowskiFunctionalProperty1}
	\|\,x\,\| < \varepsilon 
	\quad\text{if and only if}\quad - \varepsilon \, b \prec x \prec \varepsilon \, b .
	\end{equation}
\end{enumerate}
\end{lem}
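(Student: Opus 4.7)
The plan is to invoke Lemma~\ref{lem:MinkowskiFunctional1} applied to the set $[-b,b]$ and then deduce the additional information from the explicit formula \eqref{eq:MinkowskiFunctional-min2}, using (S10) for one direction of (ii) and (S12) for the other.

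First I would observe that, as noted in the paragraph preceding the statement, the closed interval $[-b,b]$ is convex, balanced, closed and bounded; furthermore, because $b \succ 0$, property (S14) makes it absorbing. Hence Lemma~\ref{lem:MinkowskiFunctional1} applies: $\|\,.\,\|$ is a norm on $Y$ and $\|x\| = \min\{\lambda \ge 0 : x \in \lambda[-b,b]\}$. Since (V2) gives $\lambda[-b,b] = [-\lambda b, \lambda b]$ for $\lambda \ge 0$, this rewrites as \eqref{eq:MinkowskiFunctional-min2}. Monotonicity of $\|\,.\,\|$ is then immediate: if $0 \preceq x \preceq y$ and $\lambda = \|y\|$, then $\lambda b \succeq 0$ and hence $-\lambda b \preceq 0 \preceq x \preceq y \preceq \lambda b$, giving $\|x\| \le \lambda = \|y\|$.

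For the forward direction of (ii) suppose $\|x\| < \varepsilon$, set $\lambda = \|x\|$, and write $\delta = \varepsilon - \lambda > 0$. Formula \eqref{eq:MinkowskiFunctional-min2} gives $\lambda b - x \succeq 0$ and $x + \lambda b \succeq 0$, while (S4) yields $\delta b \succ 0$. Combining these via (S10) produces $\varepsilon b - x \succ 0$ and $x + \varepsilon b \succ 0$, i.e.\ $-\varepsilon b \prec x \prec \varepsilon b$.

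The main obstacle is the converse of (ii): from the pair of strict inequalities $-\varepsilon b \prec x \prec \varepsilon b$ one must deduce the strict bound $\|x\| < \varepsilon$, whereas \eqref{eq:MinkowskiFunctional-min2} applied to the corresponding non-strict inequalities yields only $\|x\| \le \varepsilon$. The remedy is to apply (S12) to the two-element set $\{\varepsilon b - x,\, \varepsilon b + x\}$ of strictly positive vectors; this produces $\lambda > 0$ with $\lambda b \prec \varepsilon b - x$ and $\lambda b \prec \varepsilon b + x$. Since (V5) together with (S2) shows that these two inequalities remain valid when $\lambda$ is replaced by any smaller positive number, we may assume $\lambda < \varepsilon$. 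Then (S1) gives $-(\varepsilon - \lambda) b \preceq x \preceq (\varepsilon - \lambda) b$, so \eqref{eq:MinkowskiFunctional-min2} yields $\|x\| \le \varepsilon - \lambda < \varepsilon$, finishing the proof.
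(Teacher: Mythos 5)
Your proposal is correct and follows essentially the same route as the paper: part (i) via Lemma~\ref{lem:MinkowskiFunctional1} applied to $[-b,b]$ with monotonicity read off from \eqref{eq:MinkowskiFunctional-min2}, the forward implication in (ii) from the minimum formula together with (S10), and the converse via (S12) applied to $\varepsilon b - x$ and $\varepsilon b + x$. Your extra step ensuring $\lambda < \varepsilon$ before concluding $\|x\| \le \varepsilon - \lambda$ is a small refinement the paper leaves implicit, and it is correctly justified.
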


\begin{proof}
(i) The claim with the exception of the monotonicity of the norm follows from
Lemma~\ref{lem:MinkowskiFunctional1}.
Let $x$ and $y$ be two vectors in $Y$ such that ${0 \preceq x \preceq y}$. 
From \eqref{eq:MinkowskiFunctional-min2}, we get ${y \preceq \|\,y\,\| \, b}$. 
Hence, ${-\|\,y\,\| \, b \preceq x \preceq \|\,y\,\| \, b}$. Again from \eqref{eq:MinkowskiFunctional-min2}, we conclude that ${\|\,x\,\| \preceq \|\,y\,\|}$.
Hence, ${\|\, \, . \, \|}$ is a monotone norm.

(ii) Let ${\|\,x\,\| < \varepsilon}$. 
By \eqref{eq:MinkowskiFunctional-min2}, we have 
${- \|\,x\,\| \, b \preceq x \preceq \|\,x\,\| \, b}$ which implies
${- \varepsilon \, b \prec x \prec \varepsilon \, b}$.
 
Conversely, let ${- \varepsilon \, b \prec x \prec \varepsilon \, b}$. 
Then ${\varepsilon \, b - x \succ 0}$ and ${\varepsilon \, b + x \succ 0}$. 
It follows from (S12) that there is ${\lambda > 0}$ such that 
${\varepsilon \, b - x \succ \lambda \, b}$ and 
${\varepsilon \, b + x \succ \lambda \, b}$. Consequently, 
${- (\varepsilon - \lambda) \, b \prec x \prec (\varepsilon - \lambda) \, b}$. 
From this and \eqref{eq:MinkowskiFunctional-min2}, we conclude that
${\|\,x\,\| \le \varepsilon - \lambda < \varepsilon}$.
\end{proof}

The following theorem shows that the order topology on $Y$ is normable.
  
\begin{thm} \label{thm:MinkowskiFunctional-SolidCone}
Let ${(Y,\preceq,\prec,\rightarrow)}$ be a solid vector space, and let
${\|\, \, . \, \| \, \colon Y \to \Rset}$ be the Minkowski functional of ${[-b,b]}$ for some ${b \in Y}$ with  ${b \succ 0}$. Then:
\begin{enumerate}
	\item The monotone norm ${\|\, \, . \, \|}$ generates the order topology on $Y$.
	\item For a sequence ${(x_n)}$ in $Y$,
${x_n \rightarrow x}$ implies ${x_n \stackrel{\|.\,\|}{\rightarrow} x}$.
\end{enumerate}
\end{thm}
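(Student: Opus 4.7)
My plan is to use Lemma~\ref{lem:MinkowskiFunctional-SolidCone}(ii) to identify the norm balls explicitly with the basic open intervals of the order topology described in Remark~\ref{rem:OrderTopology}, and then derive (ii) either from (i) combined with Theorem~\ref{thm:OrderTopology2}(ii), or directly using axiom (S5) together with the characterization \eqref{eq:MinkowskiFunctionalProperty1}.

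For part (i), the key observation is that \eqref{eq:MinkowskiFunctionalProperty1} gives, for every $x_0 \in Y$ and $\varepsilon > 0$,
\[
\{x \in Y : \|x - x_0\| < \varepsilon\} = (x_0 - \varepsilon b,\, x_0 + \varepsilon b),
\]
so each norm-open ball is an open interval from the basis $\cal{B'}$ of Remark~\ref{rem:OrderTopology}; in particular every norm-open set is $\tau$-open. For the converse inclusion, let $(a_1, a_2)$ be an arbitrary basic open interval in $Y$ and let $y \in (a_1, a_2)$, so $y - a_1 \succ 0$ and $a_2 - y \succ 0$. By property (S12) applied to the finite set $\{y - a_1,\, a_2 - y\}$ together with the positive vector $b$, there exists $\lambda > 0$ with $\lambda\, b \prec y - a_1$ and $\lambda\, b \prec a_2 - y$, whence $a_1 \prec y - \lambda\, b$ and $y + \lambda\, b \prec a_2$. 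Therefore the norm ball $(y - \lambda\, b,\, y + \lambda\, b)$ is contained in $(a_1, a_2)$ (by transitivity of $\prec$), which shows that every $\tau$-open set is norm-open. Hence the two topologies coincide.

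For part (ii), the cleanest route is the following. Fix $\varepsilon > 0$; by \eqref{eq:MinkowskiFunctionalProperty1} it suffices to produce $N$ such that $-\varepsilon\, b \prec x_n - x \prec \varepsilon\, b$ for all $n > N$. Since $\varepsilon\, b \succ 0$ and $-\varepsilon\, b \prec 0$, axioms (C1) and (S5) applied to the convergent sequence $x_n - x \to 0$ yield both $x_n - x \prec \varepsilon\, b$ and $-\varepsilon\, b \prec x_n - x$ for all sufficiently large $n$. Alternatively one may simply note that by Theorem~\ref{thm:OrderTopology2}(ii) we have $x_n \stackrel{\tau}{\to} x$, and then invoke part (i) to identify $\tau$-convergence with norm convergence.

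I do not expect a genuine obstacle: the only nontrivial step is producing the $\lambda > 0$ in the basis-comparison argument of part (i), and that is precisely what property (S12) was designed to supply. Everything else is a direct translation between the norm, the strict ordering, and the order topology via Lemma~\ref{lem:MinkowskiFunctional-SolidCone} and Theorem~\ref{thm:OrderTopology2}.
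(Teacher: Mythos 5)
Your proposal is correct and follows essentially the same route as the paper: both identify the norm balls with the intervals $(x-\varepsilon b, x+\varepsilon b)$ via Lemma~\ref{lem:MinkowskiFunctional-SolidCone}(ii), both use (S12) to fit such a ball inside an arbitrary basic interval (you apply it once in the $\lambda b$ form, the paper first shrinks to $(x-c,x+c)$ and then picks $\varepsilon$ with $\varepsilon b \prec c$ --- the same idea), and the paper derives (ii) exactly as in your second, shorter route from (i) and Theorem~\ref{thm:OrderTopology2}.
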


\begin{proof}
(i) Denoting by ${B(x,\varepsilon)}$ an open ball in the normed space 
${(Y,\|\, \, . \, \|)}$, we shall prove that each ${B(x,\varepsilon)}$ contains 
some interval ${(u,v)}$ in $Y$ and vice versa. 
First, we shall prove the following identity
\begin{equation} \label{eq:Ball=Interval}
	B(x,\varepsilon) = (x - \varepsilon \, b,x - \varepsilon \, b) 
	\quad\text{for all } x \in Y \text{ and } \varepsilon > 0 . 
\end{equation}
According to Lemma~\ref{lem:MinkowskiFunctional-SolidCone},
for each ${x,y \in Y}$and ${\varepsilon > 0}$, 
\[
\|x - y\| < \varepsilon 
\quad\text{if and only if}\quad x - \varepsilon \, b \prec y \prec x + \varepsilon \, b
\]
which proves \eqref{eq:Ball=Interval}. Note that identity \eqref{eq:Ball=Interval} means that every open ball in the normed space ${(Y,\|\, \, . \, \|)}$ is an open interval in
$Y$. Now let ${(u,v)}$ be an arbitrary open interval in $Y$ and let ${x \in (u,v)}$.
Choose an interval of the type ${(x-c,x+c)}$ which is a subset of ${(u,v)}$,
where ${c \in Y}$ with ${c \succ 0}$. Then choosing ${\varepsilon > 0}$ such that ${\varepsilon \, b \prec c}$, we conclude by \eqref{eq:Ball=Interval} that ${B(x,\varepsilon) \subset (u,v)}$.

(ii) follows from (i) and Theorem~\ref{thm:OrderTopology2}.
\end{proof}

The main part of Theorem~\ref{thm:MinkowskiFunctional-SolidCone} can be formulated in the following theorem. 

\begin{thm} \label{thm:NormOnSolidVectorCone}
Let ${(Y,\preceq,\prec,\rightarrow)}$ be a solid vector space. Then there exists a monotone norm ${\|\, \, . \, \|}$ on $Y$ such that the following statements hold true.
\begin{enumerate}
	\item The norm ${\|\, \, . \, \|}$ generates the order topology on $Y$.
	\item For a sequence ${(x_n)}$ in $Y$,
	${x_n \rightarrow x}$ implies ${x_n \stackrel{\|.\,\|}{\rightarrow} x}$.
\end{enumerate}
\end{thm}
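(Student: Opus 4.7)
The plan is to reduce this immediately to the preceding Theorem~\ref{thm:MinkowskiFunctional-SolidCone}, which has essentially the same conclusion but with the norm explicitly constructed from a choice of strictly positive vector. The only work is to produce such a vector and then invoke that theorem.

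First I would note that since $Y$ is a solid vector space, its positive cone $K = \{x \in Y : x \succeq 0\}$ is solid by Definition~\ref{df:SolidSpace}, so $K^{\circ} \neq \varnothing$. By Theorem~\ref{thm:StrictVectorOrdering-SolidCone} we have $K^{\circ} = \{x \in Y : x \succ 0\}$, hence there exists at least one vector $b \in Y$ with $b \succ 0$. Fix any such $b$.

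Next I would let $\|\,\cdot\,\|$ be the Minkowski functional of the closed interval $[-b,b]$. The interval $[-b,b]$ is convex, balanced, closed, and bounded as noted just before Lemma~\ref{lem:MinkowskiFunctional-SolidCone}, and it is absorbing by property (S14). So Lemma~\ref{lem:MinkowskiFunctional1} applies and $\|\,\cdot\,\|$ is a norm on $Y$. Then Theorem~\ref{thm:MinkowskiFunctional-SolidCone} gives exactly the two claims: that this $\|\,\cdot\,\|$ is a monotone norm generating the order topology on $Y$, and that $x_n \rightarrow x$ implies $x_n \stackrel{\|.\|}{\rightarrow} x$.

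There is no real obstacle here; the entire content has been established in Theorem~\ref{thm:MinkowskiFunctional-SolidCone}, and the present theorem is essentially its abstract restatement stripped of the explicit construction. The only point worth mentioning is that the existence of the vector $b \succ 0$ requires invoking Theorem~\ref{thm:StrictVectorOrdering-SolidCone} to identify $K^\circ$ with the set of strictly positive vectors (equivalently, to know the strict vector ordering $\prec$ present in the hypothesis is nonempty, which already follows from irreflexivity plus nonemptiness of the relation).
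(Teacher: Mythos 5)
Your proposal is correct and matches the paper exactly: the paper states this theorem as a direct reformulation of Theorem~\ref{thm:MinkowskiFunctional-SolidCone} and offers no separate proof, so taking the Minkowski functional of $[-b,b]$ for some $b \succ 0$ (whose existence follows from the nonemptiness of the strict ordering, as you note) is precisely the intended argument.
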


In the next theorem we shall give a criterion for a normal vector space.
In particular, this theorem shows that every normal and solid vector space $Y$ is normable in the sense that there exists a norm ${\|\, \, . \, \|}$ on $Y$ such that 
${x_n \rightarrow x}$ if and only if ${x_n \stackrel{\|.\,\|}{\rightarrow} x}$. 
Analogous result for normability of normal topological vector space was proved by Vandergraft \cite{Van67}.

\begin{thm} \label{thm:NormalSolidVectorSpace}
Let ${(Y,\preceq,\prec,\rightarrow)}$ be a solid vector space.
Then the following statements are equivalent.
\begin{enumerate}
	\item $Y$ is a normal vector space
	\item The convergence in $Y$ is generated by a monotone norm on $Y$.
	\item The convergence in $Y$ is generated by the order topology on $Y$.
\end{enumerate}
\end{thm}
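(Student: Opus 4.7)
The plan is to close the cycle \emph{(iii)} $\Rightarrow$ \emph{(ii)} $\Rightarrow$ \emph{(i)} $\Rightarrow$ \emph{(iii)}. The first two implications are essentially bookkeeping once the preceding machinery is in place; the real work is in the last one.

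For \emph{(iii)} $\Rightarrow$ \emph{(ii)}, I would simply invoke Theorem~\ref{thm:MinkowskiFunctional-SolidCone}(i). Pick any $b \succ 0$ and let $\|\,\cdot\,\|$ be the Minkowski functional of $[-b,b]$; this is a monotone norm whose topology coincides with the order topology on $Y$. If the convergence on $Y$ is generated by the order topology, then it is generated by this monotone norm.

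For \emph{(ii)} $\Rightarrow$ \emph{(i)}, suppose a monotone norm $\|\,\cdot\,\|$ generates the convergence. Given $x_n \preceq y_n \preceq z_n$ with $x_n \rightarrow x$ and $z_n \rightarrow x$, translate to get $0 \preceq y_n - x_n \preceq z_n - x_n$. Monotonicity gives $\|\,y_n - x_n\,\| \le \|\,z_n - x_n\,\|$, and the right-hand side tends to $0$; hence $y_n - x_n \rightarrow 0$ in norm, so $y_n \rightarrow x$, which verifies the sandwich property.

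The main step is \emph{(i)} $\Rightarrow$ \emph{(iii)}. Since Theorem~\ref{thm:OrderTopology2}(ii) already provides $x_n \rightarrow x \Rightarrow x_n \stackrel{\tau}{\rightarrow} x$, I only need the converse under normality. Fix a reference vector $b \succ 0$. Applying Theorem~\ref{thm:OrderTopology2}(i) to each scale $\tfrac{1}{k} b \succ 0$, I can choose a strictly increasing sequence $N_1 < N_2 < \cdots$ of indices such that $-\tfrac{1}{k} b \prec x_n - x \prec \tfrac{1}{k} b$ whenever $n > N_k$. Define $k_n = k$ for $N_k < n \le N_{k+1}$, so that $k_n \to \infty$. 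By axiom (C3), both $\tfrac{1}{k_n} b \rightarrow 0$ and $-\tfrac{1}{k_n} b \rightarrow 0$; since by construction $-\tfrac{1}{k_n} b \preceq x_n - x \preceq \tfrac{1}{k_n} b$, the sandwich property (i) forces $x_n - x \rightarrow 0$, and (C1) then gives $x_n \rightarrow x$. The main obstacle is exactly this diagonal construction: one must use the definition of $\tau$-convergence to produce order-bounds that themselves converge to $0$ in the original convergence, so that normality can be invoked.
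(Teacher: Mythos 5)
Your proof is correct, but it closes the cycle in the opposite direction from the paper: you prove (iii)~$\Rightarrow$~(ii)~$\Rightarrow$~(i)~$\Rightarrow$~(iii), whereas the paper proves (i)~$\Rightarrow$~(ii)~$\Rightarrow$~(iii)~$\Rightarrow$~(i). The consequence is that the hard work lands in a different place. In the paper, the delicate step is (ii)~$\Rightarrow$~(iii): from $-c \prec x_n \prec c$ one extracts the quantitative bound $\|x_n\| \le 3\|c\|$ via monotonicity of the norm, and normality itself is only used in (i)~$\Rightarrow$~(ii) through the Sandwich theorem applied to $-\|x_n\|\,b \preceq x_n \preceq \|x_n\|\,b$; the step (iii)~$\Rightarrow$~(i) is then almost immediate from (S5). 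In your version, (iii)~$\Rightarrow$~(ii) is a one-line appeal to Theorem~\ref{thm:MinkowskiFunctional-SolidCone}(i) and (ii)~$\Rightarrow$~(i) is a routine translation-plus-monotonicity check, but (i)~$\Rightarrow$~(iii) forces you to manufacture, from the quantifier ``for every $c \succ 0$ eventually $-c \prec x_n - x \prec c$,'' a single pair of order-bounding sequences $\pm\frac{1}{k_n}b$ that converge to $0$ in the \emph{original} convergence so that the Sandwich theorem can bite; this diagonal construction is the idea the paper avoids by routing through the Minkowski norm. Your construction is sound: $k_n \to \infty$ gives $\frac{1}{k_n}b \to 0$ by (C3), and (S1) turns the strict bounds into the $\preceq$ bounds that Definition~\ref{df:NormalSpace} requires. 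Two small points worth making explicit: the indices $N_k$ can be taken strictly increasing without loss of generality, and $k_n$ is undefined for $n \le N_1$, which is harmless by (C5) since convergence is insensitive to finitely many terms.
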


\begin{proof}
{$(i)\rightarrow(ii)$}. 
Suppose $Y$ be a normal vector space. 
Let ${\|\, \, . \, \| \, \colon Y \to \Rset}$ be the Minkowski functional of ${[-b,b]}$ 
for some vector $b$ in $Y$ with ${b \succ 0}$. 
According to Lemma~\ref{lem:MinkowskiFunctional-SolidCone} the Minkowski functional of ${[-b,b]}$ is a monotone norm on $Y$.  
We shall prove that the convergence in $Y$ is generated by this norm. We have to prove that for a sequence ${(x_n)}$ in $Y$, 
${x_n \rightarrow x}$ if and only if ${x_n \stackrel{\|.\,\|}{\rightarrow} x}$. 
Without loss of generality we may assume that ${x=0}$. 
Then we have to prove that ${x_n \rightarrow 0}$
if and only if ${\|\,x_n\| \rightarrow 0}$.
Taking into account Theorem~\ref{thm:MinkowskiFunctional-SolidCone} we have only to prove that  
${\|\,x_n\| \rightarrow 0}$ implies ${x_n \rightarrow 0}$.
Let ${\|\,x_n\| \rightarrow 0}$.
By Lemma~\ref{lem:MinkowskiFunctional-SolidCone}, we get
\[
- \|\,x_n\| \, b \preceq x_n \preceq \|\,x_n\| \, b
\quad\text{for all }\, n.
\]
It follows from axiom (C3) that ${\|\,x_n\| \, b \rightarrow 0}$. Then by the Sandwich theorem
we conclude that ${x_n \rightarrow 0}$.

{$(ii)\rightarrow(iii)$}.
Suppose the convergence in $Y$ is generated by a monotone norm ${\|\, \, . \, \|}$ on $Y$, i.e. for a sequence ${(x_n)}$ in $Y$, 
${x_n \rightarrow x}$ if and only if ${x_n \stackrel{\|.\,\|}{\rightarrow} x}$.
We shall prove that the convergence in $Y$ is generated by the order topology $\tau$ on $Y$.
According to Theorem~\ref{thm:OrderTopology2} it is sufficient to prove that for a sequence ${(x_n)}$ in $Y$, ${x_n \stackrel{\tau}{\rightarrow} x}$ implies ${x_n \rightarrow x}$.
Again without loss of generality we may assume that ${x=0}$. 
Let ${x_n \stackrel{\tau}{\rightarrow} 0}$. Let ${\varepsilon > 0}$ be fixed.
It follows from Theorem~\ref{thm:OrderTopology2} that for every vector ${c \succ 0}$,
\begin{equation} \label{eq:TauConvergence}
-c \prec x_n \prec c
\end{equation}
for all sufficiently large $n$. From \eqref{eq:TauConvergence}, we obtain
${0 \prec c - x_n \prec 2 \, c}$. By monotonicity of the norm, we conclude that
${\|\,c - x_n\| \le 2 \, \|\,c\,\|}$ which implies that 
${\|\,x_n\| \le 3 \, \|\,c\,\|}$. Now choosing a vector ${c \succ 0}$ such that  
${\|\,c\,\| < \varepsilon / 3}$, we obtain ${\|\,x_n\| < \varepsilon}$ for all sufficiently large $n$. Hence, ${\|\,x_n\| \rightarrow 0}$ which equivalent ${x_n \rightarrow x}$.

{$(iii)\rightarrow(i)$}.
Suppose the convergence in $Y$ is generated by the order topology on $Y$. 
We shall prove that $Y$ is normal.
Obviously, condition \eqref{eq:SandwichTheorem1} in Definition~\ref{df:NormalSpace} is equivalent to the following
\begin{equation} \label{eq:SandwichTheorem2}
0 \preceq x_n \preceq y_n \text{  for all  } n \, 
\text{ and }\, y_n \rightarrow 0  
\quad \text{imply} \quad x_n \rightarrow 0. 
\end{equation}
Let ${(x_n)}$ and ${(y_n)}$ be two sequences in $Y$ such that ${0 \preceq x_n \preceq y_n}$ for all $n$ and
${y_n \rightarrow 0}$. We have to prove that ${x_n \rightarrow 0}$.
Let ${c \succ 0}$ be fixed. It follows from ${y_n \rightarrow 0}$ and (S5) that ${y_n \prec c}$ for all but finitely many $n$.
From this and ${0 \preceq x_n \preceq y_n}$, we conclude that ${-c \prec x_n \prec c}$ for all sufficiently large $n$.
Now it follows from Theorem~\ref{thm:OrderTopology2} that 
${x_n \stackrel{\tau}{\rightarrow} x}$ which is equivalent to ${x_n \rightarrow 0}$.    
\end{proof}

Note that Theorem~\ref{thm:NormalSolidVectorSpace} remains true if we replace in it ``monotone norm'' by ``semimonotone norm''. 

The following theorem shows that the convergence in a normal and solid vector space
has the properties of the convergence in $\Rset$.  

\begin{thm} \label{thm:ConvergenceNormalSolidVectorSpace-Properties}
If ${(Y,\preceq,\prec,\rightarrow)}$ is a normal and solid vector space, then the convergence on $Y$ has the following additional properties. 
\begin{description}
	\item {\rm{(C8)}} Each subsequence of a convergent sequence converges to the same limit. 
	\item {\rm{(C9)}} The convergence of a sequence and its limit do not depend on 
	finitely many of its terms.
	\item {\rm{(C10)}} If ${\lambda_n \rightarrow \lambda}$ in $\Rset$ and 
	${x_n \rightarrow x}$, then ${\lambda_n\, x_n \rightarrow \lambda\, x}$.
	\item {\rm{(C11)}} If ${\lambda_n \rightarrow 0}$ in $\Rset$ and ${(x_n)}$ 
	is a bounded sequence in $Y$, then ${\lambda_n\, x_n \rightarrow 0}$.
	\item {\rm{(C12)}} If ${(\lambda_n)}$ is a bounded sequence in $\Rset$ and 
	${x_n \rightarrow 0}$, then ${\lambda_n\, x_n \rightarrow 0}$. 
	\item {\rm{(C13)}} For each sequence  ${(x_n)}$ in $Y$, ${x_n \rightarrow x}$ if and only if 
	for every ${c \succ 0}$ there exists a natural number $N$ such that
	${x - c \prec x_n \prec x - c}$ for all ${n > N}$.
\end{description}
\end{thm}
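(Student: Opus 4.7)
The plan is to reduce everything to standard facts about norm convergence by invoking Theorem~\ref{thm:NormalSolidVectorSpace}. Since $Y$ is both normal and solid, that theorem supplies a monotone norm ${\|\,\cdot\,\|}$ on $Y$ (one concrete choice is the Minkowski functional of ${[-b,b]}$ for a fixed ${b \succ 0}$) such that ${x_n \rightarrow x}$ if and only if ${\|x_n - x\| \rightarrow 0}$ in $\Rset$. Throughout the argument I would use this identification to translate ${\rightarrow}$-convergence into real-valued convergence of ${(\|x_n - x\|)}$.

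Once the translation is made, (C8) and (C9) follow instantly from the analogous facts for real null sequences: passing to a subsequence or modifying finitely many terms does not affect convergence or the limit of a sequence in $\Rset$. For (C10), (C11), and (C12), I would use the elementary inequality
\[
\|\lambda_n x_n - \lambda x\| \le |\lambda_n|\, \|x_n - x\| + |\lambda_n - \lambda|\, \|x\|,
\]
combined with the fact that every convergent real sequence is bounded; each of the three statements then reduces to arithmetic of real null sequences. The only subtle point is that ``bounded'' in (C11) refers to order-boundedness in the sense of Definition~\ref{df:Bounded/Increasing}, which is a priori weaker than norm-boundedness, so a short check is needed: if ${a \preceq x_n \preceq b}$, then ${0 \preceq x_n - a \preceq b - a}$, and monotonicity of ${\|\,\cdot\,\|}$ yields ${\|x_n - a\| \le \|b - a\|}$, hence ${\|x_n\| \le \|a\| + \|b - a\|}$. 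So order-bounded sequences are norm-bounded, and (C11) is covered.

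For (C13), I would not route through the norm at all. Theorem~\ref{thm:NormalSolidVectorSpace} identifies ${\rightarrow}$-convergence on $Y$ with convergence in the order topology $\tau$, while Theorem~\ref{thm:OrderTopology2}(i) characterizes ${\tau}$-convergence precisely by the order-interval condition appearing in (C13). Chaining these two results delivers (C13) at once; normality is exactly what upgrades the one-way implication of Theorem~\ref{thm:OrderTopology2}(ii) to an equivalence.

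The only real point that is not pure bookkeeping is the boundedness compatibility highlighted above; the rest of the theorem is a direct corollary of the normability of normal solid vector spaces already established. No further cone-theoretic argument is required.
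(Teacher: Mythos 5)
Your proposal is correct and follows the same overall strategy as the paper: invoke Theorem~\ref{thm:NormalSolidVectorSpace} to replace the convergence ${\rightarrow}$ by convergence in a monotone norm, dispose of (C8)--(C10) and (C12) as standard normed-space facts, and obtain (C13) by chaining Theorem~\ref{thm:NormalSolidVectorSpace} with Theorem~\ref{thm:OrderTopology2}(i). The one place where your route genuinely differs is (C11). The paper stays in the order structure: from ${a \preceq x_n \preceq b}$ it derives ${|\lambda_n|\,a \preceq |\lambda_n|\,x_n \preceq |\lambda_n|\,b}$, applies axiom (C3) to the outer sequences, and then uses the Sandwich theorem (i.e., normality directly) to conclude ${\lambda_n x_n \rightarrow 0}$ before ever touching the norm. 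You instead prove that order-boundedness implies norm-boundedness via monotonicity of the Minkowski norm (${0 \preceq x_n - a \preceq b - a}$ gives ${\|x_n - a\| \le \|b - a\|}$), and then finish with real arithmetic. Both are valid; your version makes explicit a small compatibility fact (order-bounded ${\Rightarrow}$ norm-bounded for a monotone norm) that the paper sidesteps, while the paper's version shows (C11) as a direct manifestation of the Sandwich property, which is the thematic point the author wants to emphasize. Neither argument has a gap.
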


\begin{proof}
Let ${\|\, \, . \, \|}$ be a norm on $Y$ that generates the convergence in $Y$. 
The existence of such norm follows from Theorem~\ref{thm:NormalSolidVectorSpace}.
The properties C8)--(C10) are valid in any normed space. 
Property (C13) follows from Theorems \ref{thm:OrderTopology2} and \ref{thm:NormalSolidVectorSpace}.
The proofs of (C11) and (C12) are similar. We will prove only (C11).
Since ${(x_n)}$ is bounded, there exist $a,b \in Y$ such that ${a \preceq x_n \preceq b}$ for all $n$. This implies
\begin{equation} \label{eq:Bounded1}
|\lambda_n| \, a \preceq |\lambda_n| \, x_n \preceq |\lambda_n| \, b
\end{equation}
By axiom (C3), we get 
${|\lambda_n| \, a \rightarrow 0}$ and ${|\lambda_n| \, b \rightarrow 0}$.
Applying the Sandwich theorem to the inequalities \eqref{eq:Bounded1}, we conclude that
${|\lambda_n| \, x_n \rightarrow 0}$. Then by Theorem~\ref{thm:NormalSolidVectorSpace}, 
we obtain ${\|\,|\lambda_n| \,\, x_n \| \rightarrow 0}$, that is, 
${\|\,\lambda_n \, x_n \| \rightarrow 0}$. Again by Theorem~\ref{thm:NormalSolidVectorSpace}, we conclude that 
${\lambda_n \, x_n \rightarrow 0}$.    
\end{proof}

\section{Cone metric spaces and cone normed spaces}
\label{sec:ConeMetricSpacesConeNormedSpaces}

In this section we introduce the notions of cone metric spaces and cone normed spaces.
Cone metric spaces were first introduced in 1934 by Kurepa \cite{Kur34}.
Cone normed spaces were first introduced in 1936 by Kantorovich \cite{Kan36,Kan39}.
For more on these abstract metric spaces, see the monograph of Collatz \cite{Col64} 
and the survey  paper of Zabrejko \cite{Zab97}.

\begin{defn} \label{df:ConeMetricSpace}
Let $X$ be a nonempty set, and let ${(Y,\preceq,\rightarrow)}$ be an ordered vector space. 
A vector-valued function ${d \colon X \times X \to Y}$ is said to be a \emph{cone metric} 
on $Y$ if the following conditions hold:
\begin{enumerate}
	\item $d(x,y) \succeq 0$ for all $x,y \in X$ and $d(x,y) = 0$ if and only if $x=y$;
	\item $d(x,y) = d(x,y)$ for all $x,y \in X$;
	\item $d(x,y) \preceq d(x,z) + d(z,y)$ for all $x,y,z \in X$.
\end{enumerate}
The pair ${(X,d)}$ is called a \emph{cone metric space} over $Y$.
The elements of a cone metric space $X$ are called \emph{points}. 
\end{defn}

Obviously, every metric space is a cone metric space over $\Rset$.
In Section~\ref{sec:ConeMetricSpacesOverSolidVectorSpaces} 
we show that the theory of cone metric spaces over solid vector spaces
is very close to the theory of the metric spaces.   

\begin{defn} \label{df:ValuedField}
A map ${|\, \, . \, | \, \colon K \to \Rset}$ is called an \emph{absolute value} on a field $K$ if it satisfies the following axioms: 
\begin{enumerate}
\item $|\, x| \ge 0$ for all $x \in X$ and $|\, x| = 0$ if and only if $x = 0$;
\item $|\,x \, y| = |\,x| \, . |\,y|$ for all $x,y \in X$;
\item $|\,x + y| \le |\,x| + |\,y|$ for all $x \in X$.
\end{enumerate}
An absolute value is called \emph{trivial} if ${|\,x| = 1}$ for ${x \neq 0}$. A field ${(K,|\,\, . \,|)}$ equipped with a nontrivial absolute value is called a \emph{valued field}. 
\end{defn}

Note that finite fields and their extensions only have the trivial absolute value.
A valued field is always assumed to carry the topology induced by the metric 
${\rho(x, y) = |\,x - y|}$, with respect to which it is a topological field.
An absolute value is also called a \emph{multiplicative valuation} or a \emph{norm}.
For more on valuation theory, see Engler and Prestel \cite{EP05}.

One of the most important class of cone metric spaces is the class of cone normed spaces.

\begin{defn} \label{df:ConeNormedSpace}
Let $X$ be a vector space over a valued field ${(\Kset,|\,\, . \,|\,)}$,  
and let ${(Y,\preceq,\rightarrow)}$ be an ordered vector space. 
A map ${\|\, \, . \, \| \, \colon X \to Y}$ is said to be a \emph{cone norm} on $X$ 
if the following conditions hold: 
\begin{enumerate}
\item $\|\, x\| \succeq 0$ for all $x \in X$ and $\|\, x\| = 0$ if and only if $x = 0$;
\item $\|\,\lambda \, x\| = |\,\lambda| \, \|\,y\,\|$ 
for all ${\lambda \in \Kset}$ and $x \in X$;
\item $\|\,x + y\| \preceq \|\,x\,\| + \|\,y\,\|$ for all ${x, y \in X}$. 
\end{enumerate}
The pair ${(X,\|\, \, . \, \|)}$ is said to be a \emph{cone normed space} over $Y$.
\end{defn}

It is easy to show that every cone normed space ${(X,\|\, \, . \, \|)}$ over an ordered vector space $Y$ is a cone metric space over $Y$ with the cone metric defined by
${d(x, y)=\|\,x - y\|}$.

We end this section with the definitions of closed balls and bounded sets in cone metric spaces.

\begin{defn}
Let  ${(X,d)}$ be a cone metric space over an ordered vector space ${(Y,\preceq,\rightarrow)}$. For a point ${x_0 \in X}$ and a vector ${r \in Y}$ with 
${r \succeq 0}$, the set
\[
\overline{U}(x_0,r) = \{x \in X : d(x, x_0) \preceq r\}
\]
is called a \emph{closed ball} with center $x_0$ and radius $r$. 
\end{defn}

\begin{defn}
Let  $X$ be a cone metric space.
\begin{description}
	\item (a) A set ${A \subset X}$ is called \emph{bounded} if it is contained in some closed ball.
	\item (b) A sequence ${(x_n)}$ in $X$ is called \emph{bounded} if the set of its terms is bounded.
\end{description}
\end{defn}

Let ${(X,d)}$ be a cone metric space over an ordered vector space
${(Y,\preceq,\rightarrow)}$. 
It is easy to show that a nonempty set ${A \subset X}$ is bounded if and only if there exists a vector ${b \in Y}$ such that ${d(x,y) \preceq b}$ for all 
${x,y \in A}$.

Analogously, if ${(X,\|\, \, . \, \|)}$ is a cone normed space over an ordered vector space ${(Y,\preceq,\rightarrow)}$, then a nonempty set ${A \subset X}$ is bounded if and only if there exists a vector ${b \in Y}$ such that ${\|x\| \preceq b}$ for all 
${x \in A}$.

\section{Cone metric spaces over solid vector spaces}
\label{sec:ConeMetricSpacesOverSolidVectorSpaces}

In this section we shall study the cone metric spaces over solid vector spaces.
The theory of such cone metric spaces is very close to the theory of the usual metric spaces.
We show that every cone metric space over a solid vector space is a metrizable topological space. Every cone normed space over a solid vector space is normable.

\subsection{Topological structure of cone metric spaces}
\label{sec:TopologicalStructureOfConeMetricSpaces}

\begin{defn}
Let  ${(X,d)}$ be a cone metric space over a solid vector space ${(Y,\preceq,\prec,\rightarrow)}$. 
For a point ${x_0 \in X}$ and a vector ${r \in Y}$ with 
${r \succ 0}$, the set
\[
U(x_0,r) = \{x \in X : d(x,x_0) \prec r\} 
\]
is called an \emph{open ball} with center $x_0$ and radius $r$. 
\end{defn}

\begin{thm} \label{thm:TopologicalSpace}
Let  ${(X,d)}$ be a cone metric space over a solid vector space ${(Y,\preceq,\prec,\rightarrow)}$.
Then the collection 
\[
{\mathcal B} = \{ U(x,r) : x \in X, r \in Y, r \succ 0 \}
\]
of all open balls in $X$ is a basis for a topology $\tau_d$ on $X$
\end{thm}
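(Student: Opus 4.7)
The plan is to verify the two defining axioms for a basis: (B1) every point of $X$ lies in some member of $\mathcal{B}$, and (B2) for every $x \in U(x_1,r_1) \cap U(x_2,r_2)$ with $U(x_i,r_i) \in \mathcal{B}$, there exists $U(x,r) \in \mathcal{B}$ with $U(x,r) \subset U(x_1,r_1) \cap U(x_2,r_2)$.

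For (B1), I would argue directly: given any $x \in X$, pick any vector $r \succ 0$ (such $r$ exists since $Y$ is solid, hence the strict vector ordering is nonempty by the proof of Theorem~\ref{thm:StrictVectorOrdering-SolidCone}). Then $d(x,x) = 0 \prec r$, so $x \in U(x,r)$.

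For (B2), suppose $x \in U(x_1,r_1) \cap U(x_2,r_2)$, so $d(x,x_i) \prec r_i$ for $i=1,2$. By axiom (S3) these give $r_i - d(x,x_i) \succ 0$ for $i=1,2$. Now I invoke property (S12) (applied to the finite set $\{r_1 - d(x,x_1),\, r_2 - d(x,x_2)\}$ of strictly positive vectors) to produce a single vector $r \succ 0$ with $r \prec r_i - d(x,x_i)$ for $i=1,2$. I claim $U(x,r) \subset U(x_i,r_i)$ for each $i$. Indeed, for $y \in U(x,r)$ the triangle inequality yields $d(y,x_i) \preceq d(y,x) + d(x,x_i)$, and since $d(y,x) \prec r$ axiom (S3) gives $d(y,x) + d(x,x_i) \prec r + d(x,x_i) \prec r_i$. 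Combining with (S9) (passage from $\preceq$ combined with $\prec$ to $\prec$), I conclude $d(y,x_i) \prec r_i$, so $y \in U(x_i,r_i)$.

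The main (and really the only) obstacle is producing a \emph{single} strictly positive radius $r$ that works simultaneously for both constraints; this is precisely what property (S12) of the strict vector ordering provides, and it is the reason the statement requires $Y$ to be solid (so that a strict vector ordering exists on $Y$ by Theorem~\ref{thm:StrictVectorOrdering-SolidCone}). The remaining bookkeeping consists of correctly mixing $\preceq$ with $\prec$ via axioms (S2), (S3), and (S9), which is routine once (S12) is in hand.
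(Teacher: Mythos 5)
Your proof is correct and follows essentially the same route as the paper: apply (S12) to the two strictly positive vectors $r_i - d(x,x_i)$ to obtain a single radius $r \succ 0$, then finish with the triangle inequality and the compatibility axioms (the paper cites (S10) where you use (S3) plus transitivity, and your final step is really (S2) rather than (S9), but this is only a labelling slip). You also explicitly check the first basis axiom, which the paper omits as trivial.
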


\begin{proof}
Suppose  ${U(x_1,c_1)}$ and ${U(x_2,c_2)}$ are two open balls in $X$ 
and $x \in U(x_1,c_1) \cap U(x_2,c_2)$.
Then ${d(x, x_i) \prec c_i}$ for ${i = 1, 2}$.  From (S3), we get ${c_i - d(x, x_i) \succ 0}$ for ${i = 1, 2}$. It follows from (S12) that there exists a vector ${c \in Y}$ with ${c \succ 0}$ such that ${c \prec c_i - d(x, x_i)}$ for 
${i = 1, 2}$. By (S3), we obtain ${d(x, x_i) \prec c_i - c}$ for ${i = 1, 2}$. 
Now using the triangle inequality and (S10), it easy to show that
${U(x,c) \subset U(x_1,c_1) \cap U(x_2,c_2)}$.
Therefore, the collection $\mathcal{B}$ is a basis for a topology on $X$. 
\end{proof}

Thanks to Theorem~\ref{thm:TopologicalSpace} we can give the following definition.

\begin{defn} \label{df:ConeMetricTopology}
Let  ${(X,d)}$ be a cone metric space over a solid vector space ${(Y,\preceq,\prec,\rightarrow)}$.
The topology $\tau_d$ on $X$ with  basis formed by open balls in $X$ is called the 
\emph{cone metric topology} on $X$.
\end{defn}

We shall always assume that a cone metric space ${(X,d)}$ over a solid vector space $Y$
is endowed with the cone metric topology $\tau_d$. Hence, every cone metric space is a topological space.  

\begin{defn}\label{df:CauchySequence}
Let ${(X,d)}$ be a cone metric space over a solid vector space
${(Y,\preceq,\prec,\rightarrow)}$. Then:
\begin{enumerate}
	\item A sequence ${(x_n)}$ in $X$ is called \emph{Cauchy} if for every ${c \in Y}$ with ${c \succ 0}$ there is ${N \in \Nset}$ such that ${d(x_n,x_m) \prec c}$ for all 
${n, m > N}$.
	\item A cone metric space $X$ is called \emph{complete} if each Cauchy sequence in $X$ 
	is convergent.
	\item A complete cone normed space is called a \emph{cone Banach space}. 
	\end{enumerate}
\end{defn}

In the following theorem we show that each cone metric space ${(X,d)}$ over a  solid vector space is metrizable. Moreover, if ${(X,d)}$ is a complete cone metric space, then it is completely metrizable.    

\begin{thm} \label{thm:CMS-Metrizability}
Let ${(X,d)}$ be a cone metric space over a solid vector space 
${(Y,\preceq,\prec,\rightarrow)}$.
Suppose ${\|\, \, . \, \| \, \colon Y \to \Rset}$ is the Minkowski functional of ${[-b,b]}$ for some ${b \in Y}$ with  ${b \succ 0}$. 
Then:
\begin{enumerate}
	\item The metric ${\rho \colon X \times X \to \Rset}$ defined by 
	${\rho(x,y) = \|d(x,y)\|}$ generates the cone metric topology on $X$.
	\item The cone metric space ${(X,d)}$ is complete if and only if the metric space ${(X,\rho)}$ is complete.
	\item For ${x_i,y_i \in X}$ and $\lambda_i \in \Rset$ $(i=0,1,\ldots,n)$,
\[
d(x_0,y_0) \preceq \lambda_0 + \sum_{i=1}^n {\lambda_i \, d(x_i,y_i)}
\text{ implies  }
\rho(x_0,y_0) \le \|\lambda_0\| + \sum_{i=1}^n {\lambda_i \, \rho(x_i,y_i)}
\]
\end{enumerate}
\end{thm}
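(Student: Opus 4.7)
The overall strategy is to transport all three claims from the cone-order setting on $Y$ to the real-valued setting via the Minkowski norm $\|\, \cdot\,\|$, using two facts from the preceding section as a bridge: Lemma~\ref{lem:MinkowskiFunctional-SolidCone}(i), that $\|\, \cdot\,\|$ is a \emph{monotone} norm on $Y$, and Lemma~\ref{lem:MinkowskiFunctional-SolidCone}(ii), the key equivalence $\|y\| < \varepsilon \iff -\varepsilon b \prec y \prec \varepsilon b$. The first is what allows inequalities of vectors to become inequalities of scalars; the second is what matches open balls in the two topologies.

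\textbf{Part (i).} First verify that $\rho$ is a metric. Non-negativity and symmetry are immediate from the corresponding axioms for $d$. For definiteness, $\rho(x,y)=0$ gives $\|d(x,y)\|=0$, hence $d(x,y)=0$ (since $\|\, \cdot\,\|$ is a norm), hence $x=y$. For the triangle inequality, the cone triangle inequality $d(x,z) \preceq d(x,y)+d(y,z)$ has both sides ${\succeq 0}$, so monotonicity of $\|\, \cdot\,\|$ followed by its own triangle inequality yields $\rho(x,z) \le \rho(x,y)+\rho(y,z)$. For the topological claim, observe that since $d(x,x_0)\succeq 0$ the lower estimate in Lemma~\ref{lem:MinkowskiFunctional-SolidCone}(ii) is automatic, and therefore the open $\rho$-ball coincides with a particular cone-metric ball:
\[
B_\rho(x_0,\varepsilon)=\{x\in X : \|d(x,x_0)\|<\varepsilon\}= U(x_0,\varepsilon\, b).
\]
So every $\rho$-ball is already a cone-metric basic open. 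Conversely, given an arbitrary cone-metric ball $U(x_0,r)$ with $r\succ 0$, apply property~(S12) to the pair $\{r\}$ and the vector $b\succ 0$ to obtain $\varepsilon>0$ with $\varepsilon\, b \prec r$; then transitivity of $\prec$ (axiom (S2) together with (S1)) gives $B_\rho(x_0,\varepsilon)=U(x_0,\varepsilon\, b)\subset U(x_0,r)$. Standard basis-change reasoning then shows $\tau_\rho=\tau_d$.

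\textbf{Part (ii).} Translate the Cauchy condition: by the same application of Lemma~\ref{lem:MinkowskiFunctional-SolidCone}(ii), a sequence $(x_n)$ is $\rho$-Cauchy iff for every $\varepsilon>0$ there is $N$ with $d(x_n,x_m)\prec \varepsilon\, b$ for all $n,m>N$. If $(x_n)$ is $d$-Cauchy then, given any $\varepsilon>0$, apply the $d$-Cauchy condition with $c=\varepsilon\, b\succ 0$. Conversely, if $(x_n)$ is $\rho$-Cauchy then, given any $c\succ 0$, use (S12) to pick $\varepsilon>0$ with $\varepsilon\, b\prec c$ and invoke the $\rho$-Cauchy condition with that $\varepsilon$. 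Hence the Cauchy sequences coincide; since by part (i) the two topologies coincide, so do the notions of convergence and therefore of completeness.

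\textbf{Part (iii).} Apply $\|\, \cdot\,\|$ to the hypothesized inequality. Since both sides lie in the positive cone, monotonicity of the norm gives
\[
\rho(x_0,y_0)=\|d(x_0,y_0)\| \le \Bigl\|\lambda_0+\textstyle\sum_{i=1}^{n}\lambda_i\, d(x_i,y_i)\Bigr\|,
\]
and the triangle inequality together with absolute homogeneity of the norm on $Y$ (with the convention that $\|\lambda_0\|$ denotes the norm of the $\lambda_0$-term on the right of the hypothesis) finishes the estimate.

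The main obstacle is the topological equivalence in part (i): the forward inclusion of $\rho$-balls into $d$-balls is the step that essentially requires property (S12), and it is also the engine behind the Cauchy equivalence in (ii). Once that is clean, parts (ii) and (iii) are short applications of monotonicity of the norm.
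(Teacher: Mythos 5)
Your proposal is correct and follows essentially the same route as the paper's proof: establish the ball identity $B_\rho(x_0,\varepsilon)=U(x_0,\varepsilon\,b)$ from Lemma~\ref{lem:MinkowskiFunctional-SolidCone}(ii), nest cone-metric balls by choosing $\varepsilon$ with $\varepsilon\,b\prec r$ (the paper does this implicitly where you invoke (S12) explicitly), translate the Cauchy condition through the same identity for (ii), and use monotonicity of the Minkowski norm for (iii). No gaps; your write-up is if anything slightly more detailed than the paper's in verifying the metric axioms for $\rho$.
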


\begin{proof}
(i) It follows from Lemma~\ref{lem:MinkowskiFunctional-SolidCone}(i) and Definition~\ref{df:ConeMetricSpace} that $\rho$ is a metric on $X$.
Denoting by ${B(x,\varepsilon)}$ an open ball in the metric space ${(X,\rho)}$ and by   
${U(x, c)}$ an open ball in the cone metric space ${(X,d)}$, we shall prove that each ${B(x,\varepsilon)}$ contains some ${U(x, c)}$ and vice versa. 
First, we shall show that
\begin{equation} \label{eq:BallIdentity1}
	B(x,\varepsilon) = U(x,\varepsilon \, b) 
	\quad\text{for all } x \in X \text{ and } \varepsilon > 0 . 
\end{equation}
According to Lemma~\ref{lem:MinkowskiFunctional-SolidCone}(ii),
for all ${x,y \in X}$ and ${\varepsilon > 0}$, 
\[
\|d(x,y)\| < \varepsilon 
\quad\text{if and only if}\quad d(x,y) \prec \varepsilon \, b ,
\]
that is,
\begin{equation} \label{eq:BallIdentity}
\rho(x,y) < \varepsilon \quad\text{if and only if}\quad d(x,y) \prec \varepsilon \, b
\end{equation}
which proves \eqref{eq:BallIdentity1}. Note that identity \eqref{eq:BallIdentity1} means that every open ball in the metric space ${(X,\rho)}$ is an open ball in the cone metric space
${(X,d)}$. Now let ${U(x, c)}$ be an arbitrary open ball in the cone metric space ${(X,d)}$.
Choosing ${\varepsilon > 0}$ such that ${\varepsilon \, b \prec c}$, we conclude by \eqref{eq:BallIdentity1} that ${B(x,\varepsilon) \subset U(x,c)}$.

(ii) Let ${(x_n)}$ be a sequence in $X$. We have to prove that ${(x_n)}$ is $d$-Cauchy if and only if it is $\rho$-Cauchy. First note that \eqref{eq:BallIdentity} implies that for each ${\varepsilon > 0}$ and all ${m,n \in \Nset}$,
\[
\rho(x_n,x_m) < \varepsilon 
\quad\text{if and only if}\quad d(x_n,x_m) \prec \varepsilon \, b .
\]
Let ${(x_n)}$ be $d$-Cauchy and ${\varepsilon > 0}$ be fixed. 
Then there is an integer $N$ such that ${d(x_n,x_m) \prec \varepsilon b}$ for  all 
${m,n > N}$. Hence, ${\rho(x_n,x_m) < \varepsilon}$ for  all ${m,n > N}$ which means that ${(x_n)}$ be $\rho$-Cauchy . 

Now, let ${(x_n)}$ be $\rho$-Cauchy and ${c \succ 0}$ be fixed. 
Choose ${\varepsilon > 0}$ such that ${\varepsilon b \prec c}$.
Then there is an integer $N$ such that ${d(x_n,x_m) < \varepsilon}$ for  all 
${m,n > N}$. Therefore, for these $n$ and $m$ we get 
${d(x_n,x_m) \prec \varepsilon b \prec c}$ which means that ${(x_n)}$ is $d$-Cauchy.

(iii) follows from the monotony of the norm ${\|\, \, . \, \|}$ and the definition of the metric $\rho$.
\end{proof}

As we have seen the identity \eqref{eq:BallIdentity1} plays an important role
in the proof of Theorem~\ref{thm:CMS-Metrizability}.
It is easy to see that this identity holds also for closed balls
in the spaces ${(X,\rho)}$ and ${(X,d)}$. Namely, we have
\begin{equation} \label{eq:BallIdentity2}
	\overline{B}(x,\varepsilon) = \overline{U}(x,\varepsilon \, b) 
	\quad\text{for all } x \in X \text{ and } \varepsilon > 0 . 
\end{equation}

The main idea of Theorem~\ref{thm:CMS-Metrizability} can be formulated in the following theorem.

\begin{thm} \label{thm:CMS-Metrizability2}
Let ${(X,d)}$ be a cone metric space over a solid vector space 
${(Y,\preceq,\prec,\rightarrow)}$. Then there exists a metric $\rho$ on $X$ such that the following statements hold true.
\begin{enumerate}
	\item The metric $\rho$ generates the cone metric topology on $X$.
	\item The cone metric space ${(X,d)}$ is complete if and only if the metric space ${(X,\rho)}$ is complete.
	\item For ${x_i,y_i \in X}$ $(i=0,1,\ldots,n)$ 
	and ${\lambda_i \in \Rset}$ $(i=1,\ldots,n)$,
\[
d(x_0,y_0) \preceq \sum_{i=1}^n {\lambda_i \, d(x_i,y_i)}
\,\text{ implies  }\,
\rho(x_0,y_0) \le \sum_{i=1}^n {\lambda_i \, \rho(x_i,y_i)}
\]
\end{enumerate}
\end{thm}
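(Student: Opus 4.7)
The plan is to read Theorem~\ref{thm:CMS-Metrizability2} as essentially a cleaned-up restatement of Theorem~\ref{thm:CMS-Metrizability}, with the specific Minkowski construction absorbed into the existential quantifier ``there exists a metric $\rho$''. So the proof will reduce to Theorem~\ref{thm:CMS-Metrizability} by exhibiting a concrete choice of $\rho$ that satisfies all three clauses simultaneously.

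The first step is to use solidity to produce a suitable norm on $Y$. Since $Y$ is a solid vector space, by Theorem~\ref{thm:StrictVectorOrdering-SolidCone} its positive cone has nonempty interior, so I may fix any vector ${b \in Y}$ with ${b \succ 0}$. Let ${\|\,\cdot\,\|\colon Y \to \Rset}$ be the Minkowski functional of the closed interval ${[-b,b]}$, which by Lemma~\ref{lem:MinkowskiFunctional-SolidCone} is a monotone norm on $Y$. Now define the candidate metric by ${\rho(x,y) = \|\,d(x,y)\,\|}$ for all ${x,y \in X}$. Clauses (i) and (ii) of the present theorem are then immediate, as they are exactly clauses (i) and (ii) of Theorem~\ref{thm:CMS-Metrizability} applied to this particular $\rho$.

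The only clause requiring any additional remark is (iii). Assume the hypothesis ${d(x_0,y_0) \preceq \sum_{i=1}^n \lambda_i\, d(x_i,y_i)}$ (with ${\lambda_i \ge 0}$, as is implicit in the statement since otherwise the right-hand conclusion could be negative while ${\rho(x_0,y_0) \ge 0}$). Since $d$ is a cone metric, ${d(x_0,y_0) \succeq 0}$, so the chain ${0 \preceq d(x_0,y_0) \preceq \sum \lambda_i\, d(x_i,y_i)}$ together with the monotonicity of ${\|\,\cdot\,\|}$ from Lemma~\ref{lem:MinkowskiFunctional-SolidCone}(i) gives
\[
\|\,d(x_0,y_0)\,\| \,\le\, \Bigl\|\,\sum_{i=1}^n \lambda_i\, d(x_i,y_i)\,\Bigr\|.
\]
Applying the triangle inequality and positive homogeneity of the norm then yields ${\|\sum \lambda_i\, d(x_i,y_i)\| \le \sum \lambda_i\, \|d(x_i,y_i)\|}$, which in terms of $\rho$ is exactly the desired conclusion (iii).

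I do not see any real obstacle here: the theorem is a direct corollary of Theorem~\ref{thm:CMS-Metrizability}, and the only subtlety is to make sure that in (iii) one first invokes ${d(x_0,y_0) \succeq 0}$ before using monotonicity of the norm, since the latter requires both terms in the comparison to be nonnegative.
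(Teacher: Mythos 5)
Your proposal is correct and follows essentially the same route as the paper: Theorem~\ref{thm:CMS-Metrizability2} is stated there as a direct reformulation of Theorem~\ref{thm:CMS-Metrizability}, with $\rho(x,y)=\|d(x,y)\|$ for the Minkowski functional of $[-b,b]$, and clause (iii) obtained from the monotonicity of that norm exactly as you argue (the paper's version of (iii) carries an extra additive term $\lambda_0$, of which the present statement is the special case $\lambda_0=0$). Your explicit remark that one must first invoke $d(x_0,y_0)\succeq 0$ before applying monotonicity, and that the $\lambda_i$ are implicitly nonnegative, only makes the paper's one-line justification more precise.
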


Metrizable topological spaces inherit all topological properties from metric spaces. 
In particular, it follows from Theorem~\ref{thm:CMS-Metrizability2} that every cone metric space over a solid vector space
is a Hausdorff paracompact space and first-countable.
Since every  first countable space is sequential, we immediately get that every cone metric space is a sequential space. Hence, as a consequence of Theorem~\ref{thm:CMS-Metrizability2}
we get the following corollary. 

\begin{cor} \label{cor:Sequential}
Let ${(X,d)}$ be a cone metric space over a solid vector space $Y$. 
Then the following statements hold true. 
\begin{enumerate}
	\item A subset of $X$ is open if and only if it is sequentially open. 
	\item A subset of $X$ is closed if and only if it is sequentially closed.
	\item A function ${f \colon D \subset X \to X}$ is continuous if and only if it is sequentially continuous.
\end{enumerate}
\end{cor}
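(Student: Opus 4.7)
The plan is to reduce everything to the fact that $(X,d)$ is metrizable via Theorem~\ref{thm:CMS-Metrizability2}, and then invoke the standard topological fact that in a first-countable space the sequential notions coincide with the topological ones. Since the paper has just noted in the paragraph preceding the corollary that every cone metric space over a solid vector space is first-countable (being metrizable), the three claims are essentially a packaging of well-known results about first-countable (equivalently, sequential) spaces.

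First I would fix a metric $\rho$ on $X$ as produced by Theorem~\ref{thm:CMS-Metrizability2}(i), so that the cone metric topology $\tau_d$ coincides with the metric topology $\tau_\rho$. A key preliminary observation is that the convergence of a sequence $(x_n)$ to $x$ in the cone metric sense is the same as $\rho$-convergence: by identity \eqref{eq:BallIdentity1}, the basic neighborhoods $U(x, \varepsilon b)$ of $x$ in $\tau_d$ are exactly the $\rho$-balls $B(x,\varepsilon)$, and these form a countable local base at $x$ (take $\varepsilon = 1/n$). Hence ``sequentially open/closed/continuous'' in $(X,d)$ agrees with the corresponding notion in $(X,\rho)$.

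Now for (i), the nontrivial direction is that sequentially open implies open. Let $A \subset X$ be sequentially open. If $A$ were not open in $\tau_\rho$, pick $x \in A$ with no $\rho$-ball around $x$ contained in $A$; then for each $n$ there is $x_n \in B(x,1/n) \setminus A$. The sequence $(x_n)$ $\rho$-converges to $x$, hence also converges to $x$ in the cone metric sense, yet $x_n \notin A$ for all $n$, contradicting sequential openness. The converse direction is immediate: if $A$ is open and $x_n \to x \in A$, then eventually $x_n$ lies in any prescribed neighborhood of $x$, in particular in $A$. Claim (ii) then follows from (i) by passing to complements, since in a topological (in particular metrizable) space the complement of an open set is closed and vice versa, and the analogous duality for sequential openness/closedness is immediate from Definition~\ref{df:OpenClosedSet} in this setting (subsequences of convergent sequences converge to the same limit in a metric space).

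For (iii), I would use the same first-countability argument locally at each point. If $f$ is sequentially continuous and $f$ fails to be continuous at some $x_0 \in D$, then there exists a $\rho$-ball $B(f(x_0),\varepsilon)$ whose preimage contains no $\rho$-ball around $x_0$; choosing $x_n \in B(x_0,1/n) \cap D$ with $f(x_n) \notin B(f(x_0),\varepsilon)$ yields $x_n \to x_0$ but $f(x_n) \not\to f(x_0)$, a contradiction. The converse is again immediate. The main (and only) obstacle is to make sure the bridge between $\tau_d$-convergence and $\rho$-convergence is clearly spelled out, so that the standard metric-space arguments transfer verbatim; once Theorem~\ref{thm:CMS-Metrizability2} is in hand this is routine.
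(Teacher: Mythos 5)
Your proposal is correct and follows essentially the same route as the paper: the paper derives the corollary directly from the metrizability of $(X,d)$ established in Theorem~\ref{thm:CMS-Metrizability2}, noting that a metrizable space is first-countable and hence sequential, so that the sequential and topological notions of open, closed and continuous coincide. You merely spell out the standard first-countability arguments that the paper leaves implicit, and your bridge between $\tau_d$-convergence and $\rho$-convergence via the identity \eqref{eq:BallIdentity1} is exactly the mechanism the paper relies on.
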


\begin{lem} \label{lem:ClosedBalls}
Let ${(X,d)}$ be a cone metric space over a solid vector space
${(Y,\preceq,\prec,\rightarrow)}$. Then every closed ball ${\overline{U}(a,r)}$ in $X$ is a closed set. 
\end{lem}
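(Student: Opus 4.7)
The plan is to establish sequential closedness of $\overline{U}(a,r)$ and then invoke Corollary~\ref{cor:Sequential}(ii). Accordingly, let $(x_n)$ be a sequence in $\overline{U}(a,r)$ converging to some $x$ in $(X,\tau_d)$; I must show $d(x,a)\preceq r$.

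I would prove this inequality via property (S11) applied to the vector $d(x,a)-r$: it suffices to verify that $d(x,a)-r \prec c$ for every $c\succ 0$. Fix such a $c$. Since the open balls $U(x,c)$ form a neighborhood basis of $x$ in $\tau_d$ by Theorem~\ref{thm:TopologicalSpace}, the convergence $x_n\to x$ supplies an index $n$ with $d(x,x_n)\prec c$. Combining this strict inequality with the hypothesis $d(x_n,a)\preceq r$ via (S10) gives $d(x,x_n)+d(x_n,a)\prec c+r$; the triangle inequality furnishes $d(x,a)\preceq d(x,x_n)+d(x_n,a)$; and (S2) promotes these to $d(x,a)\prec r+c$. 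A final application of (S3) rearranges this into $d(x,a)-r\prec c$, which is exactly what was needed.

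This is essentially the classical metric-space proof with $\preceq,\prec$ replacing $\le,<$; the genuinely new ingredient is property (S11), which plays here the role of the real-line fact ``$t<\varepsilon$ for all $\varepsilon>0$ implies $t\le 0$''. The only mild obstacle is orchestrating the correct interplay between $\preceq$ and $\prec$ through (S2), (S3), and (S10); no heavier machinery from the preceding sections (metrizability of $X$, Minkowski functionals, normality of $Y$) is actually required.
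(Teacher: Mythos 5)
Your proof is correct and follows essentially the same route as the paper's: reduce to sequential closedness via Corollary~\ref{cor:Sequential}, use the triangle inequality together with $d(x,x_n)\prec c$ to get $d(x,a)-r\prec c$ for every $c\succ 0$, and conclude with (S11). The only difference is that you make explicit the uses of (S2), (S3), and (S10) that the paper leaves implicit.
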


\begin{proof}
According to Corollary~\ref{cor:Sequential} we have to prove that ${\overline{U}(a,r)}$ is a sequentially closed set.
Let ${(x_n)}$ be a convergent sequence in ${\overline{U}(a,r)}$ and let $x \in Y$ be its limit.
Let ${c \in Y}$ with ${c \succ 0}$ be fixed. 
Since ${x_n \rightarrow x}$, then there exists $n \in \Nset$ such that ${d(x_n,x) \prec c}$. 
Using the triangle inequality, we get $d(x,a) \preceq d(x_n,a) + d(x_n,x) \prec r + c$. 
Hence, $d(x,a) - r \prec c$ for all ${c \in Y}$ with ${c \succ 0}$.
Then by (S11) we conclude that $d(x,a) - r \preceq 0$ which implies ${x \in \overline{U}(a,r)}$. 
Therefore, ${\overline{U}(a,r)}$ is a closed set in $X$.
\end{proof}

\begin{rem}
Theorem~\ref{thm:CMS-Metrizability2} plays an important role in the theory of
cone metric spaces over a solid vector space. 
In particular, using this theorem one can prove that some fixed point theorems in cone metric spaces are equivalent to their versions in usual  metric spaces. 
For example, the short version of the Banach contraction principle in complete cone metric spaces 
(see Theorem~\ref{thm:CMS-BanachContractionPrinciple-ShortVersion} below) follows 
directly from its short version in metric spaces. 
Du \cite{Du10} was the first who showed that there are equivalence between some metric and cone metric results.
He obtained his results using the so-called nonlinear scalarization function.
One year later, Kadelburg, Radenovi\'c and Rako\v cevi\'c \cite{KRR11} showed that the same results can be obtained using Minkowski functional in topological vector spaces.  
\end{rem}

\begin{rem}
Theorem~\ref{thm:CMS-Metrizability} generalizes and extends some recent results of
Du \cite[Theorems 2.1 and 2.2]{Du10},
Kadelburg, Radenovi\'c and Rako\v cevi\'c \cite[Theorems 3.1 and 3.2]{KRR11},
\c Cakalli, S\"onmez and Gen\c c \cite[Theorem~2.3]{CSG10},
Simi\'c \cite[Theorem~2.2]{Sim11},
Abdeljawad and Rezapour \cite[Theorem~16]{AR11}
Arandelovi\'c and Ke\v cki\'c \cite[Lemma~2]{AK11},
All of these authors have studied cone metric spaces over a solid Hausdorff topological vector space.
Note that the identity \eqref{eq:BallIdentity1} was proved by \c Cakalli, S\"onmez and Gen\c c \cite[Theorem~2.2]{CSG10} provided that $Y$ is a Hausdorff topological vector space. 

Theorem~\ref{thm:CMS-Metrizability} generalizes and extends also some recent results of
Amini-Harandi and Fakhar \cite[Lemma~2.1]{AF10},
Turkoglu and Abuloha \cite{TA10},
Khani and Pourmahdian \cite[Theorem~3.4]{KP11},
S\"onmez \cite[Theorem~1]{Son10},
Asadi, Vaezpour and Soleimani \cite[Theorem~2.1]{AVS11},
Feng and Mao \cite[Theorem~2.2]{FM10}.
These authors have studied cone metric spaces over a solid Banach space.

Note that Asadi and Soleimani \cite{AS11} proved that the metrics of Feng and Mao \cite{FM10} and Du \cite{Du10} are equivalent.

Finally, let us note a work of Khamsi \cite{Kha10} in which he introduced a metric type structure in cone metric spaces over a normal Banach space.
\end{rem}

\begin{defn} \label{df:ConeNormTopology}
Let ${(X,\|\, \, . \, \|)}$ be a cone normed space over a solid vector space ${(Y,\preceq,\prec,\rightarrow)}$. 
The cone metric topology $\tau_d$ on $X$ induced by the metric ${d(x, y)=\|\,x - y\|}$
is called the \emph{cone topology} on $X$.
\end{defn}

In the following theorem we show that each cone normed space 
${(X,\|\, \, . \, \|)}$ over a solid vector space is normable. 
Moreover, if ${(X,\|\, \, . \, \|)}$ is a cone Banach space, then it is completely normable. 

\begin{thm} \label{thm:CNS-Normability}
Suppose $X$ is a vector space over a valued field ${(\Kset,|\,\, . \,|\,)}$.  
Let ${(X,\|\, \, . \, \|)}$ be a cone normed space over a solid vector space ${(Y,\preceq,\prec,\rightarrow)}$.
Let ${\mu \colon Y \to \Rset}$ be the Minkowski functional of ${[-b,b]}$ for some ${b \in Y}$ with  ${b \succ 0}$. Then:
\begin{enumerate}
	\item The norm ${|||\,\, . \,|||}$ defined by 
	${|||\,x||| = \mu(\|\,x\,\|)}$ generates the cone topology on $X$. 
	\item The space ${(X,\|\, \, . \, \|)}$ is a cone Banach space if and only if 
	${(X,|||\,\, . \,|||)}$ is a Banach space.
	\item For ${x_i \in X}$ and $\lambda_i \in \Rset$ $(i=0,1,\ldots,n)$,
\[
\|\,x_0\| \preceq \lambda_0 + \sum_{i=1}^n {\lambda_i \, \|\,x_i\|}
\,\,\,\text{implies}\,\,\,
|||\,x_0||| \le \mu(\lambda_0) + \sum_{i=1}^n {\lambda_i \, |||\,x_i|||}.
\]
\end{enumerate}
\end{thm}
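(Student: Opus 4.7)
The strategy is to reduce everything to Theorem~\ref{thm:CMS-Metrizability} by viewing $(X,\|\,\cdot\,\|)$ as the cone metric space with cone metric $d(x,y)=\|x-y\|$. Under this identification, the cone topology on $X$ is (by Definition~\ref{df:ConeNormTopology}) exactly the cone metric topology $\tau_d$, and $(X,\|\,\cdot\,\|)$ is a cone Banach space if and only if $(X,d)$ is a complete cone metric space. Applying Theorem~\ref{thm:CMS-Metrizability} with the chosen $b\succ0$ produces a metric $\rho(x,y)=\mu(d(x,y))=\mu(\|x-y\|)=|||\,x-y\,|||$ that generates $\tau_d$ and preserves completeness.

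Once we verify that $|||\,\cdot\,|||$ is actually a norm on $X$, the associated metric is precisely $\rho$, so claims (i) and (ii) follow at once from parts (i) and (ii) of Theorem~\ref{thm:CMS-Metrizability}. For the norm axioms: non-negativity is immediate from Lemma~\ref{lem:MinkowskiFunctional-SolidCone}(i). Definiteness holds because $\mu$ is a norm on $Y$, so $|||\,x\,|||=\mu(\|x\|)=0$ forces $\|x\|=0$, and then $x=0$ by Definition~\ref{df:ConeNormedSpace}(i). Absolute homogeneity over the valued field combines Definition~\ref{df:ConeNormedSpace}(ii) with the real homogeneity of the norm $\mu$ (valid since $|\lambda|\ge0$):
\[
|||\,\lambda x\,|||=\mu(\|\lambda x\|)=\mu(|\lambda|\,\|x\|)=|\lambda|\,\mu(\|x\|)=|\lambda|\,|||\,x\,|||.
\]
The triangle inequality requires two properties of $\mu$ that both come from Lemma~\ref{lem:MinkowskiFunctional-SolidCone}(i): monotonicity on the positive cone and subadditivity. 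From $\|x+y\|\preceq\|x\|+\|y\|$ with both sides $\succeq0$, monotonicity gives $\mu(\|x+y\|)\le\mu(\|x\|+\|y\|)$, and subadditivity then yields $\mu(\|x\|+\|y\|)\le\mu(\|x\|)+\mu(\|y\|)=|||\,x\,|||+|||\,y\,|||$.

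For part (iii) the same two ingredients suffice. Under the hypothesis (reading $\lambda_0$ as a vector in $Y$, as in Theorem~\ref{thm:CMS-Metrizability}(iii)), both sides of $\|x_0\|\preceq\lambda_0+\sum_i\lambda_i\|x_i\|$ lie in the positive cone, so monotonicity of $\mu$ gives $\mu(\|x_0\|)\le\mu\bigl(\lambda_0+\sum_i\lambda_i\|x_i\|\bigr)$, and subadditivity together with real homogeneity distribute the right-hand side as $\mu(\lambda_0)+\sum_i\lambda_i\mu(\|x_i\|)$, which is the claimed bound on $|||\,x_0\,|||$. The only nontrivial point anywhere in the argument is the triangle inequality, and that reduces entirely to the monotonicity of the Minkowski functional supplied by Lemma~\ref{lem:MinkowskiFunctional-SolidCone}(i); the remainder is a mechanical transcription of the cone-metric result to the cone-normed setting via the canonical identification $d(x,y)=\|x-y\|$.
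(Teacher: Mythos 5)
Your proof is correct and follows essentially the same route as the paper: both reduce the theorem to Theorem~\ref{thm:CMS-Metrizability} via the identification $d(x,y)=\|x-y\|$ and the observation that $\rho=\mu\circ d$, i.e. $\rho(x,y)=|||\,x-y\,|||$. Your explicit verification that $|||\,\cdot\,|||$ satisfies the norm axioms (which the paper leaves tacit) is a welcome bit of added rigor but does not change the argument.
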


\begin{proof}
The cone topology on the space ${(X,\|\, \, . \, \|)}$ is induced by the cone metric 
${d(x,y) = \|x - y\|}$ and the topology on ${(X,|||\,\, . \,|||)}$ is induced by the metric 
${\rho(x,y) = |||x - y|||}$. It is easy to see that ${\rho = \mu \circ d}$.
Now the conclusions of the theorem follow from Theorem~\ref{thm:CMS-Metrizability}.  
\end{proof}

\begin{rem}
Theorem~\ref{thm:CNS-Normability}(i) was recently proved by
\c Cakalli, S\"onmez and Gen\c c \cite[Theorem 2.4]{CSG10} 
provided that ${\Kset = \Rset}$ and $Y$ is a Hausdorff topological vector space.
\end{rem}

The following corollary is an immediate consequence of Theorem~\ref{thm:CNS-Normability}(i).

\begin{cor} \label{cor:ConeNormedSpaces-TVS}
Every cone normed space ${(X,\|\, \, . \, \|)}$ over a solid vector space $Y$ is a topological vector space. 
\end{cor}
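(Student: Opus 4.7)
The plan is to reduce the corollary to Theorem~\ref{thm:CNS-Normability}(i), which already equips $X$ with an honest norm over the valued field $\Kset$ whose induced topology is the cone topology $\tau_d$. Once this reduction is in hand, the conclusion becomes the classical fact that any normed space over a valued field is a topological vector space.

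Concretely, I would proceed in two steps. Since $Y$ is solid I first pick some $b\in Y$ with $b\succ 0$ and form the Minkowski functional $\mu$ of $[-b,b]$; Theorem~\ref{thm:CNS-Normability}(i) then asserts that ${|||\,x\,|||=\mu(\|\,x\,\|)}$ is a norm on $X$ as a $\Kset$-vector space and that it generates $\tau_d$. Second, I verify that $(X,|||\,\cdot\,|||)$ is a topological vector space in the usual way: continuity of addition follows from the triangle inequality
\[
|||\,(x_n+y_n)-(x+y)\,|||\le|||\,x_n-x\,|||+|||\,y_n-y\,|||,
\]
while continuity of scalar multiplication follows from
\[
|||\,\lambda_n x_n-\lambda x\,|||\le|\lambda_n|\,|||\,x_n-x\,|||+|\lambda_n-\lambda|\,|||\,x\,|||,
\]
combined with the continuity of $|\cdot|$ on $\Kset$ (which holds because $\Kset$ is a valued field).

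There is essentially no real obstacle once Theorem~\ref{thm:CNS-Normability} is available. The single point worth verifying carefully is the absolute homogeneity $|||\,\lambda x\,|||=|\lambda|\,|||\,x\,|||$ for $\lambda\in\Kset$: the Minkowski functional $\mu$ is only $\Rset$-homogeneous, but combining the cone-norm identity $\|\,\lambda x\,\|=|\lambda|\,\|\,x\,\|$ with the fact that $|\lambda|$ is a nonnegative real reduces the claim to $\Rset_{\ge 0}$-homogeneity of $\mu$, which is part of its being a seminorm. Beyond this mild bookkeeping, the argument is a routine unwinding of definitions.
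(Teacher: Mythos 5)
Your proposal is correct and follows the same route as the paper, which derives the corollary as an immediate consequence of Theorem~\ref{thm:CNS-Normability}(i); you merely spell out the standard verification that a normed space over a valued field is a topological vector space, including the worthwhile observation that absolute homogeneity of $|||\,\cdot\,|||$ over $\Kset$ reduces to the nonnegative real homogeneity of the Minkowski functional via $\|\,\lambda x\,\| = |\lambda|\,\|\,x\,\|$.
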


\subsection{Convergence in cone metric spaces}
\label{sec:ConvergenceInConeMetricSpaces}

Let ${(X,d)}$ be a cone metric space over a solid vector space ${(Y,\preceq,\prec,\rightarrow)}$.
Let ${(x_n)}$ be a sequence in $X$ and $x$ a point in $X$. 
We denote the convergence of ${(x_n)}$ to $x$ with respect to the cone metric topology, by ${x_n \stackrel{d}{\rightarrow} x}$ or simply by ${x_n \rightarrow x}$.
Obviously, ${x_n \stackrel{d}{\rightarrow} x}$ if and only if for every vector ${c \in Y}$ with ${c \succ 0}$, ${d(x_n,x) \prec c}$ for all but finitely many $n$.
This definition for the convergence in cone metric spaces over a solid Banach space can be found in the works of Chung \cite{Chu81,Chu82} published in the period from 1981 to 1982.
The definition of complete cone metric space (Definition~\ref{df:CauchySequence}) in the case when $Y$ is a solid Banach space also can be found in \cite{Chu81,Chu82}.    

\begin{thm} \label{thm:ConvergenceInConeMetricSpaces}
Let ${(X,d)}$ be a cone metric space over a solid vector space 
$(Y,\preceq,\prec,\rightarrow)$. 
Then the convergence in $X$ has the following properties.
\begin{enumerate}
	\item Any convergent sequence has a unique limit.
	\item Any subsequence of a convergent sequence converges to the same limit.
	\item Any convergent sequence is bounded.
	\item The convergence and the limit of a sequence do not depend on finitely many of its terms.
\end{enumerate}
\end{thm}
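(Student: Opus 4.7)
The plan is to reduce everything to the classical metric space case by invoking Theorem~\ref{thm:CMS-Metrizability2}. That theorem produces a metric $\rho$ on $X$ generating the cone metric topology $\tau_d$, so $x_n \stackrel{d}{\rightarrow} x$ if and only if $\rho(x_n,x) \to 0$. From this, properties (i), (ii), and (iv) are immediate classical facts about metric spaces: $(X,\rho)$ is Hausdorff, so (i) holds; subsequences of $\rho$-convergent sequences converge to the same limit, so (ii) holds; and modifying finitely many terms does not affect the tail of the sequence, so (iv) holds.

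For (iii), one first observes that any $\rho$-convergent sequence $(x_n)$ is $\rho$-bounded, i.e., contained in some closed metric ball $\overline{B}(a,R)$ in $(X,\rho)$: from some index on, its terms lie within $\rho$-distance $1$ of the limit, and the finitely many initial terms can be absorbed by enlarging $R$. It then remains to translate $\rho$-boundedness into cone-metric boundedness: using identity \eqref{eq:BallIdentity2} together with the vector $b \succ 0$ used to construct $\rho$ in Theorem~\ref{thm:CMS-Metrizability}, we have $\overline{B}(a,R) = \overline{U}(a, R\,b)$, so $(x_n)$ is contained in the closed ball $\overline{U}(a, R\,b)$ of the cone metric space, which matches exactly the definition of boundedness adopted in the paper.

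The only point requiring real attention is step (iii), because the paper's notion of boundedness is phrased via closed balls $\overline{U}(a,r)$ with $r \in Y$, $r \succeq 0$, rather than with real radii, and one must invoke \eqref{eq:BallIdentity2} to bridge the two. As a backup, each assertion also has a direct proof that avoids metrization: (i) follows from the triangle inequality together with axiom (S11), since for any $c \succ 0$ one gets $d(x,y) \preceq d(x,x_n) + d(x_n,y) \prec c$ eventually; (iv) is immediate from the definition of $d$-convergence; and (iii) follows by choosing an index $N$ past which $d(x_n,x) \prec b_0$ for some fixed $b_0 \succ 0$, and then applying (S13) to the finite set $\{d(x_1,x),\ldots,d(x_N,x),b_0\}$ to obtain a single vector dominating all $d(x_n,x)$. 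Either route works; the metrization route is shorter and is the approach I would present.
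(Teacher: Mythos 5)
Your proposal is correct. For parts (i), (ii) and (iv) you argue exactly as the paper does: the paper dismisses them as ``valid in any Hausdorff topological space'' (the Hausdorff property having been secured via metrizability, Theorem~\ref{thm:CMS-Metrizability2}), which is the same reduction you make through the metric $\rho$. The only genuine divergence is in part (iii). The paper proves boundedness directly inside the cone metric space: it picks $c_1 \succ 0$ with $d(x_n,x)\prec c_1$ for $n\ge N$, uses (S13) to dominate the finitely many initial distances by some $c_2$, and uses (S13) once more to find $c$ with $c_1,c_2\prec c$ --- which is precisely your ``backup'' argument. Your primary route instead passes to $\rho$-boundedness and pulls the result back through the identity $\overline{B}(a,R)=\overline{U}(a,R\,b)$ of \eqref{eq:BallIdentity2}; this is legitimate (note $R>0$ so the identity applies, and $R\,b\succ 0$), and it has the small advantage of making (iii) as automatic as the other three parts, at the cost of leaning on the specific Minkowski-functional construction of $\rho$ rather than only on the order axioms. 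Either version is acceptable; the paper's choice keeps the proof of (iii) self-contained within the solid-vector-space calculus, while yours emphasizes that the whole theorem is a corollary of metrizability.
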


\begin{proof}
The properties (i), (ii) and (iv) are valid in any Hausdorff topological space. 
It remains to prove (iii). 
Let ${(x_n)}$ be a sequence in $X$ which converges  to a point ${x \in X}$. 
Choose a vector ${c_1 \in Y}$ with ${c_1 \succ 0}$. 
Then there exists ${N \in \Nset}$ such that ${d(x_n,x) \prec c_1}$ for all ${n \ge N}$.
By (S13), there is a vector ${c_2 \in Y}$ such that ${d(x_n,x) \prec c_2}$ for all 
${n = 1,\ldots,N}$. Again by (S13), we get that there is a vector ${c \in Y}$ such that
${c_i \prec c}$ for ${i = 1,2}$. Then by the transitivity of $\prec$, we conclude that
${x_n \in U(x,c)}$ for all ${n \in \Nset}$ which means that ${(x_n)}$ is bounded.    
\end{proof}

Applying Theorem~\ref{thm:CMS-Metrizability}, we shall prove a useful sufficient condition for convergence of a sequence in a cone metric space over a solid vector space. 

\begin{thm} \label{thm:CMS-PropertyConvergence}
Let ${(X,d)}$ be a cone metric space over a solid vector space 
${(Y,\preceq,\prec,\rightarrow)}$.
Suppose ${(x_n)}$ is a sequence in $X$ satisfying
\begin{equation} \label{eq:CMS-PropertyConvergence1}
d(x_n,x) \preceq b_n + \alpha \, d(y_n,y) + \beta \, d(z_n,z)
\quad \text{for all }\, n,
\end{equation}
where $x$ is a point in $X$, 
${(b_n)}$ is a sequence in $Y$ converging to $0$, 
${(y_n)}$ is a sequence in $X$ converging to $y$, 
${(z_n)}$ is a sequence in $X$ converging to $z$,
$\alpha$ and $\beta$ are nonnegative real numbers. 
Then the sequence $(x_n)$ converges to $x$. 
\end{thm}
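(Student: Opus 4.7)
The plan is to reduce the statement to a routine computation in an associated ordinary metric space by invoking the metrizability theorem (Theorem~\ref{thm:CMS-Metrizability}). Fix any $b \in Y$ with $b \succ 0$ and let $\|\,\cdot\,\|$ denote the Minkowski functional of the closed interval $[-b,b]$. By Lemma~\ref{lem:MinkowskiFunctional-SolidCone}(i), $\|\,\cdot\,\|$ is a monotone norm on $Y$. Put $\rho(u,v) = \|d(u,v)\|$ for $u,v \in X$; by Theorem~\ref{thm:CMS-Metrizability}(i), $\rho$ is a metric on $X$ generating the cone metric topology. In particular, $x_n \stackrel{d}{\rightarrow} x$ is equivalent to $\rho(x_n,x) \to 0$ in $\Rset$, and likewise for $(y_n)$ and $(z_n)$.

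Next, I would translate the hypothesis into the scalar world. From the given inequality together with $d(x_n,x) \succeq 0$ and transitivity, the right-hand side $b_n + \alpha\, d(y_n,y) + \beta\, d(z_n,z)$ is also $\succeq 0$. Applying the monotone norm $\|\,\cdot\,\|$ to
\[
0 \preceq d(x_n,x) \preceq b_n + \alpha\, d(y_n,y) + \beta\, d(z_n,z),
\]
and then using the norm's triangle inequality and absolute homogeneity with $\alpha, \beta \ge 0$, I obtain
\[
\rho(x_n,x) \;=\; \|d(x_n,x)\| \;\le\; \|b_n\| + \alpha\, \rho(y_n,y) + \beta\, \rho(z_n,z).
\]

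It remains to see that each term on the right tends to $0$. Because $b_n \rightarrow 0$ in $Y$, Theorem~\ref{thm:MinkowskiFunctional-SolidCone}(i) together with Theorem~\ref{thm:OrderTopology2}(ii) (or directly Theorem~\ref{thm:MinkowskiFunctional-SolidCone} as packaged in Theorem~\ref{thm:NormOnSolidVectorCone}) gives $\|b_n\| \to 0$. Similarly, $y_n \rightarrow y$ and $z_n \rightarrow z$ in $X$ translate, via the equivalence above, to $\rho(y_n,y) \to 0$ and $\rho(z_n,z) \to 0$. Therefore $\rho(x_n,x) \to 0$, and invoking once more the equivalence coming from Theorem~\ref{thm:CMS-Metrizability}(i), we conclude $x_n \rightarrow x$ in $X$.

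The only subtle point, and the one I would be most careful about, is the translation of convergence in $Y$ to convergence of the real-valued norms $\|b_n\|$; this is exactly what Theorem~\ref{thm:MinkowskiFunctional-SolidCone} (respectively Theorem~\ref{thm:NormOnSolidVectorCone}) provides. Monotonicity of the norm is what lets us upgrade the vector inequality to a scalar one without any regularity assumption on $Y$, which is why the argument does not require $Y$ to be normal.
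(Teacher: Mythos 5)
Your proof is correct and follows essentially the same route as the paper's: pass to the scalar metric $\rho(u,v)=\|d(u,v)\|$ via the Minkowski functional of $[-b,b]$, use monotonicity and the triangle inequality of the norm to turn the vector inequality into a scalar one, note $\|b_n\|\to 0$ from Theorem~\ref{thm:MinkowskiFunctional-SolidCone}, and finish with the real sandwich theorem. The only difference is that you spell out the monotone-norm step and the $\succeq 0$ bookkeeping that the paper leaves implicit.
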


\begin{proof}
Let ${\|\, \, . \, \|}$ be the Minkowski functional of ${[-b,b]}$ for some ${b \in Y}$ with ${b \succ 0}$. Define the metric ${\rho}$ on $X$ as in Theorem~\ref{thm:CMS-Metrizability}. 
Then from \eqref{eq:CMS-PropertyConvergence1}, we get
\begin{equation} \label{eq:CMS-PropertyConvergence2}
\rho(x_n,x) \preceq \|\,b_n\| + \alpha \, \rho(y_n,y) + \beta \, \rho(z_n,z)
\quad \text{\emph{for all} }\, n,
\end{equation}
According to Theorem~\ref{thm:MinkowskiFunctional-SolidCone}(ii),
${b_n \rightarrow 0}$ implies ${\|\,b_n\| \rightarrow 0}$.
Hence, the right-hand side of \eqref{eq:CMS-PropertyConvergence2} converges to $0$ in $\Rset$.
By usual Sandwich theorem, we conclude that ${x_n \stackrel{\rho}{\rightarrow} x}$ which is equivalent to ${x_n \stackrel{d}{\rightarrow} x}$.
\end{proof}

\begin{rem}
A special case ${(\alpha = \beta = 0)}$ of Theorem~\ref{thm:CMS-PropertyConvergence} was 
given without proof by Kadelburg, Radenovi\'c and Rako\v cevi\'c \cite{KRR09} in the case when $Y$ is a Banach space.
This special case was proved by {\c S}ahin and Telsi \cite[Lemma~3.3]{ST10}.
\end{rem}

It is easy to see that if ${(x_n)}$ is a sequence in a cone metric space ${(X,d)}$ 
over a solid vector space $Y$, then
\begin{equation} \label{eq:CMS-PropertyConvergence3}
d(x_n,x) \rightarrow 0 \quad \text{implies} 
\quad x_n \stackrel{d}{\rightarrow} x,
\end{equation}
but the converse is not true (see Example~\ref{NoncontinuousConeMetric}(ii) below).
Note also that in general case the cone metric is not (sequentially) continuous function (see Example~\ref{NoncontinuousConeMetric}(iii) below), that is, from ${x_n \rightarrow x}$ and ${y_n \rightarrow y}$ it need not follow that
${d(x_n,y_n) \rightarrow d(x,y)}$.

In the following theorem we shall prove that the converse of \eqref{eq:CMS-PropertyConvergence3} holds provided that $Y$ is normal and solid.

\begin{thm} \label{thm:CMS/NormalSolid}
Let ${(X,d)}$ be a cone metric space over a normal and solid vector space  ${(Y,\preceq,\prec,\rightarrow)}$.
Then
\begin{equation} \label{eq:CMS/NormalSolid}
x_n \stackrel{d}{\rightarrow} x \quad\text{if and only if}\quad 
d(x_n,x) \rightarrow 0.
\end{equation}
\end{thm}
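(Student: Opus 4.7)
The forward implication $d(x_n,x)\rightarrow 0 \,\Rightarrow\, x_n \stackrel{d}{\rightarrow} x$ is the already-noted property \eqref{eq:CMS-PropertyConvergence3}, which holds in \emph{every} solid vector space (if $c\succ 0$ and $d(x_n,x)\rightarrow 0$, then by axiom (S5) or equivalently (S$5'$), $d(x_n,x)\prec c$ for all but finitely many $n$). So the only content of the theorem is the converse, and this is where normality must enter.

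The plan is to route the argument through the Minkowski functional and reduce everything to convergence in $\Rset$. Fix any $b\in Y$ with $b\succ 0$ and let $\|\cdot\|\colon Y\to\Rset$ be the Minkowski functional of $[-b,b]$. By Lemma~\ref{lem:MinkowskiFunctional-SolidCone}, $\|\cdot\|$ is a monotone norm on $Y$. Two facts will then be combined. First, by Theorem~\ref{thm:CMS-Metrizability}(i), the real-valued metric $\rho(u,v)=\|d(u,v)\|$ generates the cone metric topology on $X$; hence
\[
x_n \stackrel{d}{\rightarrow} x
\quad\Longleftrightarrow\quad
\rho(x_n,x)\rightarrow 0 \text{ in }\Rset
\quad\Longleftrightarrow\quad
\|d(x_n,x)\|\rightarrow 0 \text{ in }\Rset.
\]
Second, since $Y$ is normal and solid, Theorem~\ref{thm:NormalSolidVectorSpace} (the implication $(i)\Rightarrow(ii)$) guarantees that the convergence on $Y$ is itself generated by this monotone norm; applied to the sequence $d(x_n,x)$ in $Y$, this gives
\[
\|d(x_n,x)\|\rightarrow 0 \text{ in }\Rset
\quad\Longleftrightarrow\quad
d(x_n,x)\rightarrow 0 \text{ in }Y.
\]
Chaining the two equivalences yields \eqref{eq:CMS/NormalSolid}.

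There is no real obstacle; the entire task is bookkeeping to match the hypotheses of the two named theorems. The one point that deserves an explicit word is that normality is used exactly once, in invoking Theorem~\ref{thm:NormalSolidVectorSpace} to convert norm convergence in $Y$ back into $\rightarrow$-convergence in $Y$ (the other direction of that equivalence, which we also need, is Theorem~\ref{thm:MinkowskiFunctional-SolidCone}(ii) and requires only that $Y$ be solid). An alternative, essentially equivalent route would sidestep the Minkowski functional: from $x_n\stackrel{d}{\to}x$ and $d(x_n,x)\succeq 0$ we get $0\preceq d(x_n,x)\prec c$ for all but finitely many $n$ for every $c\succ 0$, i.e.\ $d(x_n,x)\stackrel{\tau}{\to}0$ in the order topology, and then $(iii)\Rightarrow$ (convergence in $Y$) from Theorem~\ref{thm:NormalSolidVectorSpace} closes the argument. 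I would present the Minkowski-functional version because it reuses the metric $\rho$ already built in Theorem~\ref{thm:CMS-Metrizability} and makes the proof a two-line composition of cited equivalences.
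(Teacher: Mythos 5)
Your proof is correct and follows essentially the same route as the paper: both pass to the metric $\rho(u,v)=\|d(u,v)\|$ built from the Minkowski functional of $[-b,b]$ and then use normality to identify norm convergence in $Y$ with $\rightarrow$-convergence. If anything, your citation is slightly more precise than the paper's, which attributes the full equivalence $u_n\rightarrow 0 \Leftrightarrow \|u_n\|\rightarrow 0$ to Theorem~\ref{thm:MinkowskiFunctional-SolidCone} alone, whereas (as you note) the direction requiring normality really comes from Theorem~\ref{thm:NormalSolidVectorSpace}.
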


\begin{proof}
Let ${\|\, \, . \, \|}$ be the Minkowski functional of ${[-b,b]}$ for some ${b \in Y}$ with  ${b \succ 0}$. 
Define the metric ${\rho}$ on $X$ as in Theorem~\ref{thm:CMS-Metrizability}.
By Theorem~\ref{thm:CMS-Metrizability},
\begin{equation} \label{eq:CMS/NormalSolid1}
x_n \stackrel{d}{\rightarrow} x \quad\text{if and only if}\quad 
x_n \stackrel{\rho}{\rightarrow} x.
\end{equation}
By  Theorem~\ref{thm:MinkowskiFunctional-SolidCone}, for each sequence ${u_n}$ in $Y$
\[ 
u_n \rightarrow 0 \quad\text{if and only if}\quad \|\,u_n\| \rightarrow 0 .
\]
Applying this with ${u_n = d(x_n,x)}$, we get
\[ 
d(x_n,x) \rightarrow 0 \quad\text{if and only}\quad \rho(x_n,x) \rightarrow 0 ,
\]
that is,
\begin{equation} \label{eq:CMS/NormalSolid2}
d(x_n,x) \rightarrow 0 \quad\text{if and only}\quad x_n \stackrel{\rho}{\rightarrow} x.
\end{equation}
Now \eqref{eq:CMS/NormalSolid} follows from \eqref{eq:CMS/NormalSolid1} and
\eqref{eq:CMS/NormalSolid2}.
\end{proof}

The following theorem follows immediately from Corollary~\ref{cor:ConeNormedSpaces-TVS}.
It can also be proved by Theorem~\ref{thm:CMS-PropertyConvergence}. 
 
\begin{thm} \label{thm:ConvergenceInConeNormedSpaces}
Suppose $X$ is a vector space over a valued field ${(\Kset,|\,\, . \,|\,)}$.  
Let ${(X,\|\, \, . \, \|)}$ be a cone normed space over a solid vector space ${(Y,\preceq,\prec,\rightarrow)}$. Then the convergence in $X$ satisfies the properties 
\emph{(}i\emph{)}--\emph{(}iv\emph{)} of Theorem~\ref{thm:ConvergenceInConeMetricSpaces} and it satisfies also the following properties.
\begin{description}
	\item {\rm{(v)}} If $x_n \rightarrow x$ and $y_n \rightarrow y$, then 
	$x_n + y_n \rightarrow x+y$.
	\item {\rm{(vi)}} If $\lambda_n \rightarrow \lambda$ in $\Kset$ and $x_n \rightarrow x$, then $\lambda_n x_n \rightarrow \lambda\, x$.
\end{description}
\end{thm}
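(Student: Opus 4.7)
The plan is to first observe that a cone normed space is a special case of a cone metric space, which immediately gives properties (i)--(iv), and then to derive (v) and (vi) from the norm axioms combined with Theorem~\ref{thm:CMS-PropertyConvergence}.

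First, setting ${d(x,y) = \|x-y\|}$ makes $(X,d)$ a cone metric space over the solid vector space $Y$, and by Definition~\ref{df:ConeNormTopology} the cone topology on $X$ coincides with the cone metric topology induced by $d$. Therefore properties (i)--(iv) are nothing more than the corresponding assertions of Theorem~\ref{thm:ConvergenceInConeMetricSpaces} applied to this cone metric.

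For (v), suppose ${x_n \to x}$ and ${y_n \to y}$. The triangle inequality in the cone norm yields
\[
d(x_n+y_n,\, x+y) = \|(x_n - x) + (y_n - y)\| \preceq \|x_n - x\| + \|y_n - y\| = d(x_n,x) + d(y_n,y).
\]
An application of Theorem~\ref{thm:CMS-PropertyConvergence} with ${b_n = 0}$ and ${\alpha = \beta = 1}$ then delivers ${x_n + y_n \to x+y}$.

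For (vi), the identity ${\lambda_n x_n - \lambda x = \lambda_n(x_n - x) + (\lambda_n - \lambda)x}$ combined with the homogeneity and triangle inequality of ${\|\, . \,\|}$ gives
\[
d(\lambda_n x_n,\, \lambda x) \preceq |\lambda_n|\, d(x_n, x) + |\lambda_n - \lambda|\, \|x\|.
\]
The main obstacle is that the coefficient ${|\lambda_n|}$ varies with $n$, whereas Theorem~\ref{thm:CMS-PropertyConvergence} requires a fixed constant. To overcome this, note that ${\lambda_n \to \lambda}$ in the valued field $\Kset$ implies ${|\lambda_n| \to |\lambda|}$ in $\Rset$ via the reverse triangle inequality ${\bigl||\lambda_n| - |\lambda|\bigr| \le |\lambda_n - \lambda|}$, so the real sequence ${(|\lambda_n|)}$ is bounded by some constant ${M > 0}$. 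Setting ${b_n := |\lambda_n - \lambda|\, \|x\|}$, axiom (C3) of the convergence on $Y$ yields ${b_n \to 0}$, since ${|\lambda_n - \lambda| \to 0}$ in $\Rset$ and ${\|x\| \in Y}$. Consequently
\[
d(\lambda_n x_n,\, \lambda x) \preceq b_n + M\, d(x_n, x),
\]
and a final application of Theorem~\ref{thm:CMS-PropertyConvergence} (with ${\alpha = M}$, ${\beta = 0}$, and any harmless choice such as ${z_n = z = x}$) produces ${\lambda_n x_n \to \lambda x}$.
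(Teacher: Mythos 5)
Your proof is correct. The paper itself disposes of this theorem in one line: it says the result ``follows immediately from Corollary~\ref{cor:ConeNormedSpaces-TVS}'' (every cone normed space over a solid vector space is a topological vector space, so addition and scalar multiplication are continuous, and continuity passes to sequences because the cone topology is metrizable), and then remarks that it ``can also be proved by Theorem~\ref{thm:CMS-PropertyConvergence}'' without carrying that out. What you have written is precisely that second, unelaborated route, executed in full. The two approaches trade off as follows: the paper's argument is shorter but rests on the normability machinery behind Corollary~\ref{cor:ConeNormedSpaces-TVS} (Theorem~\ref{thm:CNS-Normability} via the Minkowski functional); yours is more elementary and self-contained, needing only the cone norm axioms, axiom (C3), property (V5) of the vector ordering, and the sufficient condition of Theorem~\ref{thm:CMS-PropertyConvergence}. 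The one genuinely delicate point in the direct route --- that the coefficient $|\lambda_n|$ in the estimate for $d(\lambda_n x_n, \lambda x)$ varies with $n$ while Theorem~\ref{thm:CMS-PropertyConvergence} demands fixed constants --- you handle correctly by bounding $(|\lambda_n|)$ via the reverse triangle inequality in the valued field and replacing $|\lambda_n|\,d(x_n,x)$ by $M\,d(x_n,x)$ (which uses $d(x_n,x)\succeq 0$ and (V5)), and by absorbing the term $|\lambda_n-\lambda|\,\|x\|$ into the null sequence $(b_n)$ via (C3).
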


\subsection{Complete cone metric spaces}
\label{sec:CompleteConeMetricSpaces}

Now we shall prove a useful sufficient condition for Cauchy sequence in cone metric spaces
over a solid vector space. The second part of this result gives an error estimate for the limit of a convergent sequence in cone metric space. Also we shall prove a criterion for completeness of a cone metric space over a solid vector space.

\begin{thm} \label{thm:CMS-PropertiesCauchy}
Let ${(X,d)}$ be a cone metric space over a solid vector space
${(Y,\preceq,\prec,\rightarrow)}$. 
Suppose ${(x_n)}$ is a sequence in $X$ satisfying
\begin{equation} \label{eq:CMS-PropertiesCauchy}
d(x_n,x_m) \preceq b_n \quad\text{for all \,} n, m \ge 0 \text{ with } m \ge n,
\end{equation}
where ${(b_n)}$ is a sequence in $Y$ which converges to $0$. Then:
\begin{enumerate}
	\item The sequence ${(x_n)}$ is a Cauchy sequence in $X$.
	\item If ${(x_n)}$ converges to a point ${x \in X}$, 
	then 
\begin{equation} \label{eq:K-PropertiesConvergence}
d(x_n,x) \preceq b_n \quad \text{for all }\, n \ge 0. 
\end{equation}
\end{enumerate}
\end{thm}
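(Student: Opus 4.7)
The plan is to prove (i) using the axiom (S5$'$) that, for any $c \succ 0$, eventually $b_n \prec c$, and to prove (ii) by combining the triangle inequality with the definition of convergence in $(X,d)$ and property (S11). Crucially, I will not appeal to ``$d(x_m,x) \rightarrow 0$'' for (ii), since the paper has just noted that this can fail in a general (non-normal) solid vector space; instead, I will use only the weaker information that $d(x_m,x) \prec c$ eventually for every $c \succ 0$.

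For (i), let $c \succ 0$ be arbitrary. Since $b_n \rightarrow 0$, axiom (S5$'$) yields an index $N$ such that $b_n \prec c$ for all $n > N$. Given $n,m > N$, I split into the cases $m \ge n$ and $m < n$. In the first case the hypothesis gives $d(x_n,x_m) \preceq b_n$, and combining with $b_n \prec c$ via property (S9) gives $d(x_n,x_m) \prec c$. In the second case, symmetry of the cone metric reduces to the first case with the roles of $n$ and $m$ swapped. Hence $d(x_n,x_m) \prec c$ for all $n,m > N$, which is exactly the Cauchy condition of Definition~\ref{df:CauchySequence}.

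For (ii), fix $n \ge 0$ and suppose $x_m \to x$. For any $m \ge n$, the triangle inequality and the hypothesis give
\[
d(x_n,x) \,\preceq\, d(x_n,x_m) + d(x_m,x) \,\preceq\, b_n + d(x_m,x).
\]
Now let $c \succ 0$ be arbitrary. Since $x_m \stackrel{d}{\to} x$, there exists $m \ge n$ with $d(x_m,x) \prec c$. Applying (S3) to this inequality gives $b_n + d(x_m,x) \prec b_n + c$, and combining with the previous display via (S9) yields $d(x_n,x) \prec b_n + c$, equivalently $d(x_n,x) - b_n \prec c$ (using (S3) again to shift $-b_n$ to the left). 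As this holds for every $c \succ 0$, property (S11) forces $d(x_n,x) - b_n \preceq 0$, that is $d(x_n,x) \preceq b_n$.

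The only subtle step is the one I just described in (ii): replacing an unavailable continuity statement by the basic definition of convergence in the cone metric topology and then extracting the limit via (S11). Everything else reduces to careful but routine bookkeeping with the order relations $\preceq$ and $\prec$ and the cone-metric triangle/symmetry axioms, together with axiom (S5$'$) for (i).
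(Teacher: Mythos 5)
Your proof is correct and follows essentially the same route as the paper's: (S5) (equivalently (S5$'$)) for part (i), and for part (ii) the triangle inequality with an arbitrary $c \succ 0$, the shift via (S3), and the passage to $\preceq$ via (S11). The only slip is a label: the transitivity step combining a relation $\preceq$ followed by $\prec$ is property (S2), not (S9), but this does not affect the argument.
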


\begin{proof}
(i) Let ${c \in Y}$ with ${c \succ 0}$ be fixed. According to (S5), 
${b_n \rightarrow 0}$ implies that there is ${N \in \Nset}$ such that 
${b_n \prec c}$ for all ${n >N}$. It follows from \eqref{eq:CMS-PropertiesCauchy} and (S2) that ${d(x_n,x_m) \prec c}$ for all ${m, n >N}$ with ${m \ge n}$. Therefore, $x_n$ is a Cauchy sequence in $X$.

(ii) Suppose ${x_n \rightarrow x}$. Let ${n \ge 0}$ be fixed. 
Choose an arbitrary ${c \in Y}$ with ${c \succ 0}$. 
Since ${x_n \rightarrow x}$, then there exists ${m >n}$ such that ${d(x_m,x) \prec c}$.
By the triangle inequality, \eqref{eq:CMS-PropertiesCauchy} and (S10), we get
\[
d(x_n,x) \preceq d(x_n,x_m) + d(x_m,x) \prec b_n + c.  
\]
It follows from (S3) that ${d(x_n,x) - b_n \prec c}$ holds for each ${c \succ 0}$. 
Which according to (S11) means that ${d(x_n,x) - b_n \preceq 0}$. 
Hence, ${d(x_n,x) \preceq  b_n}$ which completes the proof.      
\end{proof}

\begin{rem}
Theorem~\ref{thm:CMS-PropertiesCauchy}(i) was 
proved by  Azam, Beg and Arshad \cite[Lemma~1.3]{ABA10} in the case when $Y$ is a topological vector space. Note also that whenever the cone metric space ${(X,d)}$ is complete, then the assumption of the second part of Theorem~\ref{thm:CMS-PropertiesCauchy} is satisfied automatically.   
\end{rem}

A sequence of closed balls ${(\overline{U}(x_n,r_n))}$
in a cone metric space $X$ is called a \emph{nested sequence} 
if
\[ 
\overline{U}(x_1,r_1) \supset \overline{U}(x_2,r_2) \supset \ldots
\]
Now we shall prove a simple criterion for the completeness of a cone metric space over a solid vector space.

\begin{thm} [Nested ball theorem] \label{thm:NestedBallTheorem}
A cone metric space ${(X,d)}$ over a solid vector space ${(Y,\preceq,\prec,\rightarrow)}$
is complete if and only if every nested sequence ${(\overline{U}(x_n,r_n))}$ of closed balls in $X$ such that $r_n \rightarrow 0$ has a nonempty intersection. 
\end{thm}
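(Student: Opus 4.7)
The plan is to prove the two implications separately, with Theorem~\ref{thm:CMS-PropertiesCauchy} doing most of the work.

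For the forward direction, I would assume $(X,d)$ is complete and take a nested sequence $(\overline{U}(x_n,r_n))$ with $r_n \to 0$. The inclusion $\overline{U}(x_m,r_m) \subset \overline{U}(x_n,r_n)$ for $m \geq n$ forces $d(x_n,x_m) \preceq r_n$, which is exactly the hypothesis of Theorem~\ref{thm:CMS-PropertiesCauchy}(i) with $b_n = r_n$. Hence $(x_n)$ is Cauchy, and completeness produces a limit $x$. Theorem~\ref{thm:CMS-PropertiesCauchy}(ii) then delivers $d(x_n,x) \preceq r_n$ for every $n$, so $x \in \bigcap_n \overline{U}(x_n,r_n)$.

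For the converse, I would assume the nested ball property, fix a vector $b \succ 0$, and take a Cauchy sequence $(x_n)$ in $X$. Noting that each $2^{-k} b \succ 0$ by (S4), I would apply the Cauchy condition successively to extract indices $n_1 < n_2 < \cdots$ with $d(x_m, x_{n_k}) \prec 2^{-k} b$ whenever $m \geq n_k$. Setting $r_k = 2^{-(k-1)} b$ and $B_k = \overline{U}(x_{n_k}, r_k)$, the triangle inequality combined with (V7) and (S1) yields
\[
d(y, x_{n_k}) \preceq d(y, x_{n_{k+1}}) + d(x_{n_{k+1}}, x_{n_k}) \preceq 2^{-k} b + 2^{-k} b = r_k
\]
for every $y \in B_{k+1}$, so the $B_k$ form a nested sequence. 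Since $r_k \to 0$ by (C3), the hypothesis supplies some $x \in \bigcap_k B_k$; then $d(x_{n_k}, x) \preceq r_k \to 0$, and Theorem~\ref{thm:CMS-PropertyConvergence} (with $\alpha = \beta = 0$) gives $x_{n_k} \to x$. Finally I would upgrade this subsequential convergence to $x_n \to x$ by the standard halving argument: given $c \succ 0$, apply the Cauchy property with $\tfrac{1}{2} c$ and the subsequential bound with $\tfrac{1}{2} c$, then combine the two $\prec$-estimates via (S10).

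The main obstacle is the choice of radii in the backward direction. They must shrink fast enough for the balls to actually nest under the triangle inequality, yet still tend to $0$. The geometric scale $2^{-k}$ is forced by the arithmetic: the cross term $d(x_{n_{k+1}}, x_{n_k})$ extracted from the Cauchy property has size $2^{-k} b$, and it must combine with the next radius $r_{k+1} = 2^{-k} b$ to give \emph{exactly} $r_k = 2^{-(k-1)} b$. Beyond this bookkeeping, everything else reduces to reading off properties (S1)--(S14) of the strict ordering in the solid vector space.
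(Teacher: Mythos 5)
Your proof is correct, but it takes a genuinely different route from the paper's. The paper proves both directions by transferring the problem to the scalar metric space $(X,\rho)$, ${\rho(x,y)=\|d(x,y)\|}$, built in Theorem~\ref{thm:CMS-Metrizability}: for necessity it uses Lemma~\ref{lem:ClosedBalls} plus the diameter bound ${\delta_n \le 2\,\|r_n\|}$ and invokes Cantor's intersection theorem in $(X,\rho)$; for sufficiency it uses the identity \eqref{eq:BallIdentity2}, ${\overline{B}(x,\varepsilon)=\overline{U}(x,\varepsilon\,b)}$, to see that $\rho$-balls are cone balls with radii ${\varepsilon_n b \rightarrow 0}$, and then quotes the classical nested ball theorem for metric spaces. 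You instead argue entirely inside the cone structure: in the forward direction you observe that nestedness gives ${d(x_n,x_m)\preceq r_n}$ and let Theorem~\ref{thm:CMS-PropertiesCauchy} produce both the Cauchy property and the membership estimate ${d(x_n,x)\preceq r_n}$ (this bypasses Lemma~\ref{lem:ClosedBalls} and Cantor's theorem altogether); in the backward direction you build the nested balls by hand with radii ${2^{-(k-1)}b}$, which is exactly the standard metric-space argument redone with (S1), (S10), (V7) and (C3), followed by the usual halving step to pass from subsequential to full convergence. Your version is longer but self-contained and never touches the Minkowski functional; the paper's version is shorter because it cashes in the metrization machinery already developed. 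Both are valid; the one detail worth making explicit in your forward direction is that ${x_m\in\overline{U}(x_m,r_m)}$ (which holds since ${r_m\succeq 0}$), as that is what turns the inclusion of balls into the inequality ${d(x_n,x_m)\preceq r_n}$.
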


\begin{proof}
Let ${\|\, \, . \, \|}$ be the Minkowski functional of ${[-b,b]}$ for some ${b \in Y}$ with ${b \succ 0}$. 
Define the metric ${\rho}$ on $X$ as in Theorem~\ref{thm:CMS-Metrizability}. By Theorem~\ref{thm:CMS-Metrizability}, ${(X,d)}$ is complete if and only if ${(X,\rho)}$ is complete.

\emph{Necessity}.
If ${(\overline{U}(x_n,r_n))}$ is a nested sequence of closed balls in ${(X,d)}$ 
such that ${r_n \rightarrow 0}$, then according to Lemma~\ref{lem:ClosedBalls} it is a nested sequence of closed sets in 
${(X,\rho)}$ with the sequence of diameters ${(\delta_n)}$ converging to zero.
Indeed, it easy to see that ${\rho(x,y) = \|d(x,y)\| \preceq 2 \, \|\,r_n\|}$ for all 
${x,y \in \overline{U}(x_n,r_n)}$.
Hence, ${\delta_n \le 2 \, \|\,r_n\|}$ which yields ${\delta_n \rightarrow 0}$. 
Applying Cantor's intersection theorem to the metric space ${(X,\rho)}$, 
we conclude that the intersection of the sets ${\overline{U}(x_n,r_n)}$ is nonempty.

\emph{Sufficiently}.
Assume that every nested sequence of closed balls in ${(X,d)}$ with radii converging to zero has a nonempty intersection. We shall prove that each nested sequence ${(\overline{B}(x_n,\varepsilon_n))}$ of closed balls in ${(X,\rho)}$ 
such that ${\varepsilon_n \rightarrow 0}$ has a nonempty intersection.
By identity \eqref{eq:BallIdentity2}, we get
\begin{equation}
	\overline{B}(x_n,\varepsilon_n) = \overline{U}(x_n,r_n) 
\quad\text{for all } n ,
\end{equation}
where ${r_n = \varepsilon_n \, b \rightarrow 0}$.
Hence, according to the assumptions the balls ${\overline{B}(x_n,\varepsilon_n)}$ have a nonempty intersection. Applying the nested ball theorem to the metric space ${(X,\rho)}$, we conclude that it is complete and so ${(X,d)}$ is also complete.
\end{proof}

\subsection{Examples of complete cone metric spaces}
\label{sec:ExamplesOfCompleteMetricSpaces}

We end this section with three examples of complete cone metric spaces. 
Some other examples on cone metric spaces can be found in \cite{Zab97}. 

\begin{exmp} \label{exmp:DiscreteConeMetricSpace}
Let $X$ be a nonempty set and let ${(Y,\preceq,\prec,\rightarrow)}$ be a  solid vector space. Suppose $a$ is a vector in $Y$ such that ${a \succeq 0}$ and ${a \neq 0}$. 
Define the cone metric ${d \colon X \times X \to Y}$ by
\begin{equation} \label{eq:DiscreteConeMetric}
d(x,y) = \left \{ \begin{array}{ll}
{a} & \mbox{ if $\, x \neq y$},\\
{0} & \mbox{ if $\, x = y$}.
\end{array} \right.
\end{equation}
Then ${(X,d)}$ is a complete cone metric space over $Y$. 
This space is called a \emph{discrete cone metric space}.
\end{exmp}

\begin{proof}
It is obvious that ${(X,d)}$ is a cone metric space 
(even if $Y$ is an arbitrary ordered vector space).
We shall prove that every Cauchy sequence in $X$ is stationary. 
Assume the contrary and choose a sequence ${(x_n)}$ in $X$ which is Cauchy but not stationary. 
Then for every ${c \in Y}$ with ${c \succ 0}$ there exist ${n, m \in \Nset}$ such that 
${d(x_n,x_m) \prec c}$ and ${x_n \neq x_m}$. Hence, ${a \prec c}$ for each ${c \succ 0}$.  
Then by (S11) we conclude that ${a \preceq 0}$ which together with ${a \succeq 0}$ leads to the contradiction 
${a = 0}$. Therefore, every Cauchy sequence in $X$ is stationary and so convergent in $X$.
\end{proof}

\begin{exmp} \label{NoncontinuousConeMetric}
Let ${(Y,\preceq,\prec,\rightarrow)}$ be a solid vector space, and let $X$ be its positive cone.
Define the cone metric ${d \colon X \times X \to Y}$ as follows
\begin{equation}
d(x,y) = \left \{ \begin{array}{ll}
{x+y} & \mbox{ if $\, x \neq y$},\\
{0} & \mbox{ if $\, x = y$}.
\end{array} \right.
\end{equation}
Then the following statements hold true:
\begin{enumerate}
	\item ${(X,d)}$ is a complete cone metric space over $Y$.
	\item If $Y$ is not normal, then there are sequences ${(x_n)}$ in $X$ such that
	${x_n \rightarrow 0}$ but ${d(x_n,0) \not\rightarrow 0}$.
	\item If $Y$ is not normal, then the cone metric $d$ is not continuous.
\end{enumerate}
\end{exmp}

\begin{proof}
First we shall prove the following claim: A sequence ${(x_n)}$ in $X$ is Cauchy if and only if it satisfies one of the following two conditions.
\begin{description}
	\item (a) The sequence ${(x_n)}$ is stationary.
	\item (b) For every ${c \succ 0}$ the inequality ${x_n \prec c}$ holds for all but finitely many $n$.
\end{description}
\emph{Necessity}. Suppose ${(x_n)}$ is Cauchy but not stationary.
Then for every ${c \in Y}$ with ${c \succ 0}$ there exists ${N \in \Nset}$ such that ${d(x_n,x_m) \prec c}$ for all 
${n, m > N}$. Hence, for all 
${n, m > N}$ we have ${x_n + x_m \prec c}$ whenever ${x_n \neq x_m}$.
Let ${n > N}$ be fixed. Since ${(x_n)}$ is not stationary, there exists ${m > N}$
such that ${x_n \neq x_m}$. Hence, ${x_n + x_m \prec c}$. From this taking into account that ${x_m \succeq 0}$, 
we get ${x_n \prec c}$ and so ${(x_n)}$ satisfies (b). 

\emph{Sufficiently}. Suppose that ${(x_n)}$ satisfies (b). 
Then for every ${c \succ 0}$ there exists ${N \in \Nset}$ such that for all 
${n > N}$ we have ${x_n \prec \frac{1}{2}\, c}$.
Let ${n, m > N}$ be fixed.
Then ${d(x_n,x_m) \preceq x_n + x_m \prec c}$ which means that ${(x_n)}$ is Cauchy.  

Now we shall prove the statements of the example.

(i)  Let ${(x_n)}$ be a Cauchy sequence in $X$. 
If ${(x_n)}$ satisfies (a), then it is convergent.
Now suppose that ${(x_n)}$ satisfies (b). 
Let ${c \succ 0}$ be fixed. Then
${d(x_n,0) \preceq x_n \prec c}$ for all but finitely many $n$. This proves that
${x_n \rightarrow 0}$. Therefore, in both cases ${(x_n)}$ is convergent. 

(ii) Since $Y$ is not normal, then there exist two sequences ${(x_n)}$ and ${(y_n)}$ in $Y$ such that 
${0 \preceq x_n \preceq y_n}$ for all $n$, 
${y_n \rightarrow 0}$ and ${x_n \not\rightarrow 0}$.
Let us consider ${(x_n)}$ as a sequence in $X$.
It follows from the definition of the cone metric $d$ that ${d(x_n,0) = x_n}$ for all $n$.
Hence, ${d(x_n,0) \preceq y_n}$ for all $n$. 
Then by Theorem~\ref{thm:CMS-PropertyConvergence}, 
we conclude that ${x_n \rightarrow 0}$.
On the other hand ${d(x_n,0) = x_n \not\rightarrow 0}$.

(iii) Assume that the cone metric $d$ is a continuous.
Let ${(x_n)}$ be any sequence in $X$ satisfying (ii). By ${x_n \rightarrow 0}$ and continuity of $d$, we obtain ${d(x_n,0) \rightarrow d(0,0)}$, i.e. ${x_n \rightarrow 0}$ in $Y$ 
which is a contradiction. Hence, the cone metric $d$ is not continuous.
\end{proof}

\begin{exmp} \label{N-DimensionalConeBanachSpace}
Let ${X = \Kset^n}$ be $n$-dimensional vector space over $\Kset$, where ${(\Kset,|\,\, . \,|\,)}$ is a valued field. 
Let ${Y = \Rset^n}$ be $n$-dimensional real vector space with the coordinate-wise convergence and the coordinate-wise ordering (see Example~\ref{exmp:R^n}). 
Define the cone norm ${\|\, \, . \, \| \colon X  \to Y}$ by
\begin{equation}
	\|\,x\,\| = (\alpha_1 |x_1|,\,\ldots,\,\alpha_n |x_n|),
\end{equation}
where ${x = (x_1,\,\ldots,\,x_n)}$ and ${\alpha_1,\,\ldots,\,\alpha_n}$ are positive real numbers.
Then ${(X,\|\, \, . \, \|)}$ is a cone Banach space over $Y$. 
\end{exmp}

\section{Iterated contractions in cone metric spaces}
\label{sec:IteratedContractionsInConeMetricSpaces}

The iterated contraction principle in usual metric spaces was first mentioned in 1968 
by Rheinboldt \cite{Rhe68} as a special case of a more general theorem. 
Two years later, an explicit formulation of this principle (with a posteriori error estimates) was given in the monograph of 
Ortega and Rheinboldt \cite[Theorem {12.3.2}]{OR70}. 
Great contribution to the iterated contraction principle in metric spaces and its applications to the fixed point theory was also given by Hicks and Rhoades \cite{HR79}, Park \cite{Par80} and others 
(see Proinov \cite[Section~6]{Pro10}). 

In this section we shall establish a full statement of the iterated contraction principle
in cone metric spaces. We shall formulate the result for nonself mappings since the case of selfmappings is a special case of this one.

Let ${(X,d)}$ be a cone metric space over a solid vector space ${(Y,\preceq,\prec,\rightarrow)}$, and let ${T \colon D \subset X \to X}$ be an arbitrary mapping in $X$.
Then starting from a point ${x_0 \in D}$ we can build up the Picard iterative sequence
\begin{equation}\label{eq:PicardIteration0}
x_{n + 1}  = Tx_n ,  \quad n = 0,1,2,\ldots,
\end{equation}
associated to the mapping $T$. We say that the iteration \eqref{eq:PicardIteration0} is \emph{well defined} if 
${x_n \in D}$  for all  ${n = 0,1,2,\ldots}$ 
The main problems which arise for the Picard iteration are the following: 
\begin{enumerate}
	\item \textsc{Convergence problem}. To find initial conditions for ${x_0 \in D}$ which guarantee that the Picard iteration \eqref{eq:PicardIteration0} is well defined and converging to a point ${\xi \in D}$.
	\item \textsc{Existence problem}. To find conditions which guarantee that 
	$\xi$ is a fixed points of $T$.
	\item \textsc{Uniqueness problem}. To find a subset of $D$ in which $\xi$ is a unique fixed point of $T$. 
	\item \textsc{Error estimates problem}. To find a priory and a posteriori estimates for the cone distance ${d(x_n,\xi)}$. 
\end{enumerate}

In our opinion, the solving of problem (i) for the convergence of the Picard iteration plays an important role for the solving of problem (ii) for existence of fixed points of $T$. 
It turns out that in many cases the convergence of the Picard iteration to a point 
${\xi \in D}$ implies that $\xi$ is a fixed point of $T$. For example, such situation can be seen in the next proposition. 

\begin{prop} \label{prop:CMS-ExistenceFP}
Let ${(X,d)}$ be a cone metric space over a solid vector space ${(Y,\preceq,\prec,\rightarrow)}$, and let 
${T \colon D \subset X \to X}$. Suppose that for some ${x_0 \in D}$ Picard iteration \eqref{eq:PicardIteration0} is well defined and converging to a point ${\xi \in D}$. Then each of the following conditions implies that $\xi$ is a fixed point of $T$. 
\begin{description}
		\item {\rm{(F1)}} $T$ is continuous at $\xi$.
		\item {\rm{(F2)}} $T$ has a closed graph.
		\item {\rm{(F3)}} $G(x) = \|d(x,Tx)\|$ is lower semicontinuous at $\xi$ for some 
		semimonotone norm ${\|\, \, . \, \|}$ on $Y$.
		\item {\rm{(F4)}} $d(\xi,T\xi) \preceq \alpha \, d(x,\xi) + \, \beta \, d(Tx,\xi)$ 
		for each $x \in D$, where $\alpha, \beta \ge 0$.
\end{description}
\end{prop}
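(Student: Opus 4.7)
The plan is to deal with the four hypotheses (F1)--(F4) in turn; in each case the goal is to show $d(\xi,T\xi)=0$, whence $\xi=T\xi$ by the definiteness of the cone metric.

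Cases (F1) and (F2) are essentially formal. Under (F1), continuity of $T$ at $\xi$ together with $x_n\to\xi$ gives $Tx_n\to T\xi$ in the cone metric topology; but $Tx_n=x_{n+1}\to\xi$, so Theorem~\ref{thm:ConvergenceInConeMetricSpaces}(i) (uniqueness of limit) yields $T\xi=\xi$. Under (F2) the sequence $(x_n,Tx_n)=(x_n,x_{n+1})$ in $X\times X$ has both coordinates converging to $\xi$, and closedness of the graph of $T$ forces $(\xi,\xi)$ to lie on that graph, i.e.\ $T\xi=\xi$.

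For (F4) the plan is to substitute $x=x_n$ to obtain $d(\xi,T\xi)\preceq\alpha\,d(x_n,\xi)+\beta\,d(x_{n+1},\xi)$ for every $n$. Both summands on the right tend to $0$ in the order topology on $Y$ (using that the order topology is a vector topology via Theorem~\ref{thm:MinkowskiFunctional-SolidCone}), so for every $c\succ 0$ their sum is eventually $\prec c$. Chaining with (S2) gives $d(\xi,T\xi)\prec c$ for every $c\succ 0$, and property (S11) then forces $d(\xi,T\xi)\preceq 0$; combined with $d(\xi,T\xi)\succeq 0$ we conclude $d(\xi,T\xi)=0$.

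The main obstacle lies in (F3), since the semimonotone norm $\|\cdot\|$ that makes $G$ lower semicontinuous is not a priori comparable to the cone metric topology on $X$. The strategy is to show $G(x_n)\to 0$ in $\mathbb{R}$, so that lower semicontinuity at $\xi$ gives $G(\xi)\le\liminf G(x_n)=0$, and hence $d(\xi,T\xi)=0$. To this end, fix a Minkowski-functional norm $\|\cdot\|_b$ of an interval $[-b,b]$ with $b\succ 0$, which by Theorem~\ref{thm:MinkowskiFunctional-SolidCone} generates the order topology on $Y$, and recall from Theorem~\ref{thm:CMS-Metrizability} that $\rho(x,y)=\|d(x,y)\|_b$ metrizes the cone metric topology on $X$, so that $\|d(x_n,\xi)\|_b\to 0$. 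Using $-\|y\|_b\,b\preceq y\preceq\|y\|_b\,b$ one obtains $0\preceq y+\|y\|_b\,b\preceq 2\|y\|_b\,b$, and the semimonotonicity constant $K$ of $\|\cdot\|$ then yields $\|y\|\le(2K+1)\|b\|\,\|y\|_b$; hence $\|d(x_n,\xi)\|\to 0$ and similarly for $x_{n+1}$. Combining this with the triangle inequality $d(x_n,x_{n+1})\preceq d(x_n,\xi)+d(x_{n+1},\xi)$ and semimonotonicity gives $G(x_n)=\|d(x_n,x_{n+1})\|\le K(\|d(x_n,\xi)\|+\|d(x_{n+1},\xi)\|)\to 0$, completing the argument.
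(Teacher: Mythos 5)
Your proposal is correct and follows essentially the same route as the paper: (F1)--(F2) via uniqueness of limits and closedness of the graph, (F3) by showing $G(x_n)\to 0$ and invoking lower semicontinuity, and (F4) by substituting $x=x_n$, passing to the limit to get $d(\xi,T\xi)\prec c$ for every $c\succ 0$, and applying (S11). The only difference is a technical detail inside (F3): the paper obtains $\|d(x_n,x_{n+1})\|\to 0$ by producing, for each $\varepsilon>0$, a vector $c\succ 0$ with $\|c\|<\varepsilon$ (namely $c=\tfrac{1}{n}b$) and using $d(x_n,x_{n+1})\prec c$ eventually, whereas you prove the quantitative comparison $\|y\|\le(2K+1)\,\|b\|\,\|y\|_b$ against the Minkowski norm; both arguments are valid.
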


\begin{proof}
If (F1) or (F2) is satisfied, then the conclusion follows from Theorem~\ref{thm:ConvergenceInConeMetricSpaces} and definition \eqref{eq:PicardIteration0} of the Picard iteration. 

Suppose that condition (F3) holds. 
Since the norm ${\|\, \, . \, \|}$ is semimonotone, there exists a constant 
${K > 0}$ such that ${\|\,x\,\| \le K \,\|\,y\,\|}$ whenever ${0 \preceq x \preceq y}$.
First we shall prove that 
${x_n \rightarrow \xi}$ implies ${\|d(x_n,x_{n+1})\| \rightarrow 0}$.
We claim that for every ${\varepsilon > 0}$ there exists a vector ${c \in Y}$ such that 
${c \succ 0}$ and ${\|\,c\,\| < \varepsilon}$. To prove this take a vector ${b \in Y}$ with ${b \succ 0}$. We have
${\left\|\,\frac{1}{n}\, b\,\right\| = \frac{1}{n} \left\|\,b\,\right\| \rightarrow 0}$.  
Hence, every vector ${c = \frac{1}{n}\, b}$ with sufficiently large $n$ satisfies ${\|\,c\,\| < \varepsilon}$. 
Now let ${\varepsilon > 0}$ be fixed. Choose a vector 
${c \in Y}$ such that ${c \succ 0}$ and ${\|\,c\,\| < \varepsilon / K}$.  
From the triangle inequality, we get ${d(x_n,x_{n+1}) \preceq d(x_n,\xi) + d(x_{n+1},\xi)}$. 
Now it follows from ${x_n \rightarrow \xi}$ that ${d(x_n,x_{n+1}) \prec c}$ for all but finitely many $n$. 
Hence, ${\|d(x_n,x_{n+1})\| \preceq K \|\,c\,\| < \varepsilon}$ for these $n$. 
Therefore, ${\|d(x_n,x_{n+1})\| \rightarrow 0}$.
Now taking into account that $G$ is lower semicontinuous at $\xi$ 
we conclude that
\[
0 \le \|\,d(\xi, T\xi)\| = G(\xi) \le \liminf_{n \rightarrow \infty}{G(x_n)} =
 \liminf_{n \rightarrow \infty}{\|\,d(x_n, x_{n+1}\|} =0 
\]
which implies that $\xi$ is a fixed point of $T$.

Suppose that condition (F4) is satisfied. By substituting ${x = x_n}$, we get
\[ 
d(\xi,T\xi) \preceq \alpha \, d(x_n,\xi) + \beta \, d(x_{n+1},\xi).
\]
From this, taking into account that ${x_n \rightarrow \xi}$, 
we conclude that ${d(\xi,T\xi) \prec c}$ for each ${c \in Y}$ with ${c \succ 0}$. 
According to (S11), this implies ${d(\xi,T\xi) \preceq 0}$. 
Therefore, ${d(\xi,T\xi) = 0}$ which means that $\xi$ is a fixed point of $T$.     
\end{proof}

\begin{rem}
Obviously, if the space ${(X,d)}$ in Proposition~\ref{prop:CMS-ExistenceFP} is a metric space, then the function $G$ in (F4) can be defined by ${G(x) = d(x,Tx)}$. 
In a metric space setting this is a classical result (see \cite{HR79}).  
Let us note also that if the space $Y$ in Proposition~\ref{prop:CMS-ExistenceFP} is a normal and solid normed space with norm ${\|\, \, . \, \|}$, then one can choose in (F4) just this norm (see \cite{War09}).
\end{rem}

Throughout this and next section for convenience we assume in $\Rset$ that ${0^0 =1}$ by definition.

\begin{prop} \label{prop:CMS-IteratedContractionPrinciple}
Let ${(X,d)}$ be a cone metric space over a solid vector space ${(Y,\preceq,\prec,\rightarrow)}$. 
Suppose ${(x_n)}$ is a sequence in $X$ satisfying  
\begin{equation} \label{eq:IteratedSequence}
	d(x_{n+1},x_{n+2}) \preceq \lambda\, d(x_n,x_{n+1}) 
	\quad \text{for every } n \ge 0,
\end{equation}
where $0{ \le \lambda < 1}$. Then ${(x_n)}$ is a Cauchy sequence in $X$ and 
lies in the closed ball ${\overline{U}(x_0,r)}$ with radius
\[
r = \frac{1}{1-\lambda} \, d(x_0,x_1).
\]
Moreover, if ${(x_n)}$ converges to a point $\xi$ in $X$, then the following estimates hold:
\begin{equation} \label{eq:IteratedSequence-AprioriEstimate}
	d(x_n, \xi) \preceq \frac{\lambda^n}{1-\lambda} \, d(x_0, x_1) 
	\quad\mbox{for all } n \ge 0;
\end{equation}
	\begin{equation} \label{eq:IteratedSequence-AposterioriEstimate1}
	d(x_n, \xi) \preceq \frac{1}{1-\lambda} \, d(x_n, x_{n+1}) 
	\quad\mbox{for all } n \ge 0;
\end{equation}
\begin{equation} \label{eq:IteratedSequence-AposterioriEstimate2}
	d(x_n, \xi) \preceq \frac{\lambda}{1-\lambda} \, d(x_n, x_{n-1}) 
	\quad\mbox{for all } n \ge 1.
\end{equation}
\end{prop}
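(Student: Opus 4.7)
The plan is to reduce everything to Theorem~\ref{thm:CMS-PropertiesCauchy} by constructing a suitable dominating sequence $(b_n)$ in $Y$. First I would prove by induction on the recurrence \eqref{eq:IteratedSequence} that
\[
d(x_n, x_{n+1}) \preceq \lambda^n \, d(x_0, x_1) \quad\text{for all } n \ge 0.
\]
Then for $m \ge n$, the triangle inequality applied $m-n$ times together with property (V5) and the finite geometric sum yield
\[
d(x_n, x_m) \preceq \sum_{k=n}^{m-1} \lambda^k \, d(x_0, x_1)
= \frac{\lambda^n - \lambda^m}{1-\lambda}\, d(x_0, x_1)
\preceq \frac{\lambda^n}{1-\lambda}\, d(x_0, x_1),
\]
the last step using $\lambda^m \ge 0$, $d(x_0, x_1) \succeq 0$, and (V5).

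Now I would set $b_n = \frac{\lambda^n}{1-\lambda}\, d(x_0, x_1)$. Since $\lambda^n \to 0$ in $\Rset$, axiom (C3) gives $b_n \rightarrow 0$ in $Y$. So the hypothesis \eqref{eq:CMS-PropertiesCauchy} of Theorem~\ref{thm:CMS-PropertiesCauchy} is met, and part~(i) of that theorem immediately yields that $(x_n)$ is Cauchy. Specializing the displayed bound to $n=0$ gives $d(x_0, x_m) \preceq \frac{1}{1-\lambda}\, d(x_0, x_1) = r$ for every $m$, so $(x_n) \subset \overline{U}(x_0, r)$. Assuming $x_n \to \xi$, Theorem~\ref{thm:CMS-PropertiesCauchy}(ii) applied with the same $b_n$ delivers the a priori estimate \eqref{eq:IteratedSequence-AprioriEstimate}.

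For the two a posteriori estimates I would exploit the fact that for each fixed $k \ge 0$ the tail $(x_k, x_{k+1}, x_{k+2}, \ldots)$ is itself a sequence satisfying \eqref{eq:IteratedSequence} with the same constant $\lambda$ and converging to $\xi$. Applying the already-proved a priori estimate with $n=0$ to this shifted sequence gives
\[
d(x_k, \xi) \preceq \frac{1}{1-\lambda}\, d(x_k, x_{k+1}),
\]
which is \eqref{eq:IteratedSequence-AposterioriEstimate1}. Finally, combining this with the hypothesis $d(x_k, x_{k+1}) \preceq \lambda \, d(x_{k-1}, x_k)$ (valid for $k \ge 1$) and monotony of scalar multiplication by a positive number yields \eqref{eq:IteratedSequence-AposterioriEstimate2}.

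I do not foresee a genuine obstacle. The only point requiring a moment of care is the passage $\frac{\lambda^n - \lambda^m}{1-\lambda}\, d(x_0,x_1) \preceq \frac{\lambda^n}{1-\lambda}\, d(x_0,x_1)$, which must be justified via (V5) using $d(x_0,x_1) \succeq 0$ rather than by a scalar inequality alone; everything else is a routine adaptation of the classical metric-space argument, with the convergence $\lambda^n \, d(x_0,x_1) \to 0$ supplied by axiom (C3).
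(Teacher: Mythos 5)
Your proposal is correct and follows essentially the same route as the paper's proof: induction to get $d(x_n,x_{n+1}) \preceq \lambda^n d(x_0,x_1)$, the triangle inequality and geometric summation to obtain the dominating sequence $b_n = \frac{\lambda^n}{1-\lambda}\,d(x_0,x_1)$, an appeal to Theorem~\ref{thm:CMS-PropertiesCauchy} for both the Cauchy property and the a priori estimate, and the shifted-sequence trick for the a posteriori estimates. The only cosmetic difference is that you bound the finite geometric sum via $\frac{\lambda^n-\lambda^m}{1-\lambda} \le \frac{\lambda^n}{1-\lambda}$ and (V5), whereas the paper majorizes the scalar coefficients by the infinite series directly; both are fine.
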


\begin{proof}
From \eqref{eq:IteratedSequence} by induction on ${n \ge 0}$, we get
\[
	d(x_n,x_{n+1}) \preceq \lambda^n \, d(x_0,x_1) 
	\quad \text{for every } n \ge 0.
\]
Now we shall show that ${(x_n)}$ satisfies
\begin{equation} \label{eq:Iterated Sequence-CauchyProperty}
d(x_n,x_m) \preceq b_n \quad\text{for all \,} n, m \ge 0 \text{ with } m \ge n,
\end{equation}
where $b_n = \frac{\lambda^n}{1-\lambda} d(x_0,x_1)$. Indeed, for all 
${n, m \ge 0}$ with ${m \ge n}$, we have 
\begin{eqnarray*}
d(x_n,x_m)
& \preceq & \sum_{j=n}^m {d(x_j,x_{j+1})} 
\preceq \sum_{j=n}^m {\lambda^j \, d(x_0,x_1)}
= \left( \sum_{j=n}^m {\lambda^j} \right) d(x_0,x_1)\\
& \preceq & \left( \sum_{j=n}^{\infty} {\lambda^j} \right) d(x_0,x_1)
= \frac{\lambda^n}{1-\lambda} \, d(x_0,x_1) = b_n \,. 
\end{eqnarray*}
It follows from axiom (C3) that ${b_n \rightarrow 0}$ in $Y$. Then by Theorem~\ref{thm:CMS-PropertiesCauchy}(i) we conclude that ${(x_n)}$ is a Cauchy sequence in $X$.
Putting ${n=0}$ in \eqref{eq:Iterated Sequence-CauchyProperty} we obtain that 
${d(x_m,x_0) \preceq b_0}$ for every ${m \ge 0}$. Hence, the sequence ${(x_n)}$ lies in the ball ${\overline{U}(x_0,r)}$ since ${r = b_0}$.
Now suppose that ${(x_n)}$ converges to a point ${\xi \in X}$. Then it follows from Theorem~\ref{thm:CMS-PropertiesCauchy}(ii) that ${(x_n)}$ satisfies the inequality ${d(x_n,\xi) \preceq b_n}$ (for every ${n \ge 0}$) which proves \eqref{eq:IteratedSequence-AprioriEstimate}.
Applying \eqref{eq:IteratedSequence-AprioriEstimate} with ${n=0}$, we conclude that the first two terms of the sequence ${(x_n)}$ satisfy the inequality
\[
d(x_0,\xi) \preceq \frac{1}{1-\lambda} \, d(x_0,x_1).
\]
Note that for every ${n \ge 0}$ the sequence ${(x_n,x_{n+1},x_{n+2},\ldots)}$ also satisfies \eqref{eq:IteratedSequence} and converges to $\xi$. Therefore, applying the last inequality to the first two terms of this sequence we get \eqref{eq:IteratedSequence-AposterioriEstimate1}.    
The inequality \eqref{eq:IteratedSequence-AposterioriEstimate2} follows from
\eqref{eq:IteratedSequence-AposterioriEstimate1} and \eqref{eq:IteratedSequence}.
\end{proof}

\begin{rem}
Proposition~\ref{prop:CMS-IteratedContractionPrinciple} generalizes, improves and complements a recent result of 
Latif and Shaddad \cite[Lemma~3.1]{LS10}. They have proved that a sequence ${(x_n)}$ in a cone metric space $X$ satisfying \eqref{eq:IteratedSequence} is Cauchy provided that $Y$ is a normal Banach space. 
\end{rem}

\begin{thm} [Iterated contraction principle] \label{thm:CMS-IterationContractionPrinciple2}
Let ${(X,d)}$ be a complete cone metric space over a solid vector space ${(Y,\preceq,\prec,\rightarrow)}$. 
Suppose ${T \colon D \subset X \to X}$ be a mapping satisfying the following conditions:
\begin{description}
	\item {\rm{(a)}} $d(Tx, T^2 x) \preceq \lambda \, d(x, Tx)$ for all $x \in D$ with $Tx \in D$, where ${0 \le \lambda < 1}$.
	\item {\rm{(b)}} There is ${x_0 \in D}$ such that $\overline{U}(x_0,r) \subset D$, where
	$r = \frac{1}{1-\lambda} \, d(x_0,Tx_0)$.
\end{description}
Then the following hold true:  
\begin{enumerate}
		\item \textsc{Convergence of the iterative method}. The Picard iteration
		\eqref{eq:PicardIteration0} starting from $x_0$ is well defined, remains 
		in the closed ball $\overline{U}(x_0,r)$ and converges to a point
		$\xi \in \overline{U}(x_0,r)$.
	\item \textsc{A priori error estimate}. The following estimate holds:
\begin{equation} 
	d(x_n, \xi) \preceq \frac{\lambda^n}{1 - \lambda} \, d(x_0, Tx_0) 
	\quad\mbox{for all } n \ge 0.
\end{equation}
	\item \textsc{A posteriori error estimates}. The following estimates hold:
	\begin{equation}
	d(x_n, \xi) \preceq \frac{1}{1-\lambda} \, d(x_n, x_{n+1}) 
	\quad\mbox{for all } n \ge 0;
\end{equation}
\begin{equation}
	d(x_n, \xi) \preceq \frac{\lambda}{1 - \lambda} \, d(x_n, x_{n-1}) 
	\quad\mbox{for all } n \ge 1.
\end{equation}
	\item \textsc{Existence of fixed points}. If at least one of the conditions
	\emph{(}F1\emph{)}--\emph{(}F2\emph{)} is satisfied, then $\xi$ is a fixed point of $T$.
\end{enumerate}
\end{thm}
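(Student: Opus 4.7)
The proof reduces to two already-established tools: Proposition~\ref{prop:CMS-IteratedContractionPrinciple} furnishes the Cauchy property, location in the ball, and the three error estimates for any sequence satisfying \eqref{eq:IteratedSequence}; while Proposition~\ref{prop:CMS-ExistenceFP} converts ``convergence of the Picard sequence to a point of $D$'' into ``fixed point'' under each of (F1)--(F2). So the real work is to verify that the Picard iteration launched at $x_0$ is well defined (i.e.\ $x_n\in D$ for every $n$) and that it satisfies \eqref{eq:IteratedSequence} along its whole orbit. Once both are in hand, (i)--(iv) follow from plugging these two propositions together.

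The plan for the well-definedness is an induction that simultaneously controls $x_n\in\overline U(x_0,r)$ and $d(x_k,x_{k+1})\preceq\lambda^k d(x_0,Tx_0)$ for $k\le n$. Base: $x_0\in\overline U(x_0,r)\subset D$. Inductive step: if $x_0,\dots,x_n\in\overline U(x_0,r)\subset D$, then $x_{n+1}=Tx_n$ is defined; moreover, for $1\le k\le n$ we have $x_{k-1}\in D$ and $Tx_{k-1}=x_k\in D$, so condition~(a) applied at $x_{k-1}$ gives $d(x_k,x_{k+1})=d(Tx_{k-1},T^2x_{k-1})\preceq\lambda\,d(x_{k-1},x_k)$, and induction on $k$ yields $d(x_k,x_{k+1})\preceq\lambda^k d(x_0,x_1)$ for $k=0,\dots,n$. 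Summing the triangle inequality and using $\sum_{k=0}^{n}\lambda^k\le\tfrac{1}{1-\lambda}$ together with axioms (V2) and (V7) of the vector ordering, we obtain
\[
d(x_0,x_{n+1})\preceq\sum_{k=0}^{n}d(x_k,x_{k+1})\preceq\Bigl(\sum_{k=0}^{n}\lambda^k\Bigr)d(x_0,x_1)\preceq r,
\]
so $x_{n+1}\in\overline U(x_0,r)\subset D$, closing the induction.

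With the whole orbit contained in $D$, condition (a) applies at every $x_n$, so inequality~\eqref{eq:IteratedSequence} holds for all $n\ge 0$. Proposition~\ref{prop:CMS-IteratedContractionPrinciple} then tells us that $(x_n)$ is Cauchy and lies in $\overline U(x_0,r)$. By completeness of $(X,d)$ there exists $\xi\in X$ with $x_n\to\xi$, and since $\overline U(x_0,r)$ is closed (Lemma~\ref{lem:ClosedBalls}) and $(x_n)$ lies in it, Corollary~\ref{cor:Sequential} gives $\xi\in\overline U(x_0,r)\subset D$. This settles (i), and invoking the ``moreover'' part of Proposition~\ref{prop:CMS-IteratedContractionPrinciple} with this $\xi$ yields the a priori estimate of (ii) and the two a posteriori estimates of (iii). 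Finally, (iv) is immediate from Proposition~\ref{prop:CMS-ExistenceFP} since both (F1) and (F2) appear verbatim there and we have exhibited a Picard sequence starting in $D$ that converges to a point $\xi\in D$.

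The only genuine obstacle is the joint induction in the second paragraph: one has to make sure the step using condition~(a) is legitimate at every $x_{k-1}$, which in turn requires $x_k\in D$, which is what we are trying to prove. The fix, as above, is to make the induction hypothesis carry both ``$x_0,\dots,x_n\in\overline U(x_0,r)$'' and the geometric bound on consecutive distances, so that the two conclusions feed each other. Everything else — the Cauchy property, the ball containment, the error estimates, and the passage from convergence to a fixed point — is already packaged in the two cited propositions.
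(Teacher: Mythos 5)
Your proof is correct, and it reaches the same endgame as the paper (reduce everything to Proposition~\ref{prop:CMS-IteratedContractionPrinciple} and Proposition~\ref{prop:CMS-ExistenceFP}), but the well-definedness step is handled by a genuinely different argument. You run a joint induction keeping every iterate inside the single fixed ball $\overline{U}(x_0,r)$, carrying along the geometric bound $d(x_k,x_{k+1})\preceq\lambda^k d(x_0,x_1)$ and summing the telescoping triangle inequality to get $d(x_0,x_{n+1})\preceq\bigl(\sum_{k=0}^{n}\lambda^k\bigr)d(x_0,x_1)\preceq r$. The paper instead introduces $\rho(x)=\frac{1}{1-\lambda}d(x,Tx)$ and the invariant set $U=\{x\in D:\overline{U}(x,\rho(x))\subset D\}$, proves $T(U)\subset U$ via the nesting $\overline{U}(Tx,\rho(Tx))\subset\overline{U}(x,\rho(x))$, and gets well-definedness from $x_0\in U$ in one clean step with no summation. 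Your route is more elementary and makes the containment in $\overline{U}(x_0,r)$ explicit along the way (you then also need Lemma~\ref{lem:ClosedBalls} to place $\xi$ in the closed ball, whereas the paper can read $\xi\in\overline{U}(x_0,r)$ off the a priori estimate with $n=0$); the paper's invariant-set argument is slicker and avoids the interlocked induction you rightly flag as the delicate point. Two tiny citation quibbles: the step $\bigl(\sum_{k=0}^{n}\lambda^k\bigr)d(x_0,x_1)\preceq\frac{1}{1-\lambda}d(x_0,x_1)$ rests on property (V5) (monotonicity in the scalar for $x\succeq 0$) rather than (V2)/(V7), and note that Proposition~\ref{prop:CMS-IteratedContractionPrinciple} already contains the summation you redo, so your induction only truly needs the containment claim. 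Neither affects correctness.
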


\begin{proof}
Define the function ${\rho \colon D \rightarrow X}$ by 
${\rho(x) = \frac{1}{1-\lambda} \, d(x, Tx)}$.
It follows from condition (a) that ${\rho(Tx) \preceq \lambda \, \rho(x)}$ for each 
${x \in D}$. Now define the set $U$ as follows
\[
U = \{ x \in D : \overline{U}(x,\rho(x)) \subset D \}. 
\]
It follows from $\rho(x_0) = r$ and (b) that the set $U$ is not empty. 
We shall prove that ${T(U) \subset U}$. 
Let $x$ be a given point in $U$. 
It follows from the definition of $\rho$ that ${d(x, Tx) \preceq \rho(x)}$ which means that 
${Tx \in \overline{U}(x,\rho(x)) \subset D}$. Therefore, ${Tx \in D}$.
Further, we shall show that
\[
\overline{U}(Tx,\rho(Tx)) \subset \overline{U}(x,\rho(x)).
\]
Indeed, suppose that $y \in \overline{U}(Tx,\rho(Tx))$. Then
\[
d(y,x) \preceq d(y,Tx) + d(x,Tx) \preceq \rho(Tx) + d(x,Tx)
\preceq \lambda \rho(x) + d(x,Tx) = \rho(x) 
\]
which means that ${y \in \overline{U}(x,\rho(x))}$.
Hence, ${\overline{U}(Tx,\rho(Tx)) \subset D}$ and so ${Tx \in U}$. This proves that
${T(U) \subset U}$ which means that Picard iteration $(x_n)$ is well defined.
From (a), we deduce that it satisfies \eqref{eq:IteratedSequence}. 
Now conclusions (i)--(iii) follow from Proposition~\ref{prop:CMS-IteratedContractionPrinciple}.
Conclusion (iv) follows from Proposition~\ref{prop:CMS-ExistenceFP}.
\end{proof}

\begin{rem} \label{rem:Sefmapings}
Obviously, whenever $T$ is a selfmapping of $X$, 
condition (b) of Theorem~\ref{thm:CMS-IterationContractionPrinciple2} is satisfied automatically for every ${x_0 \in X}$ and so it can be omitted.
If $D$ is closed and ${T(D) \subset D}$, then condition (b) also can be dropped.
\end{rem}

\begin{rem}
Note that Theorem~\ref{thm:CMS-IterationContractionPrinciple2}(i) generalizes and extends some results of Pathak and Shahzad \cite[Theorem 3.7]{PS09} and Wardowski \cite[Theorem 3.3]{War09}. Their  results have been proved for a selfmapping 
$T$ of $X$ in the case when $Y$ is a normal Banach space.
\end{rem}

\section{Contraction mappings in cone metric spaces}
\label{sec:ContractionMappingsInConeMetricSpaces}

In 1922 famous Polish mathematician Stefan Banach \cite{Ban22} established his famous fixed point theorem nowadays known as the \emph{Banach fixed point theorem} or the \emph{Banach contraction principle}.
The Banach contraction principle is one of the most useful theorem in the fixed point theory. It has a short and complete statement. Its complete form in metric space setting can be seen for example in monographs of 
Kirk \cite{Kir90}, Zeidler \cite[Section 1.6]{Zei95} and Berinde \cite[Section 2.1]{Ber07}. 

In a cone metric space setting full statement of the Banach contraction principle for a nonself mapping is given by the following theorem.

\begin{thm} [Banach contraction principle] \label{thm:CMS-BanachContractionPrinciple}
Let ${(X,d)}$ be a complete cone metric space over a solid vector space ${(Y,\preceq,\prec,\rightarrow)}$.
Let ${T \colon D \subset X \to X}$ be a mapping satisfying the following conditions:
\begin{description}
	\item {\rm{(a)}} $d(Tx, Ty) \preceq \lambda \, d(x, y)$ for all $x,y \in D$, 
	where ${0 \le \lambda < 1}$.
	\item {\rm{(b)}} There is ${x_0 \in D}$ such that $\overline{U}(x_0,r) \subset D$, where
	$r = \frac{1}{1-\lambda} \, d(x_0, Tx_0)$.
\end{description}
Then the following hold true:  
\begin{enumerate}
		\item \textsc{Existence and uniqueness}. $T$ has a unique fixed point $\xi$ in $D$.
		\item \textsc{Convergence of the iterative method}. The Picard iteration
		\eqref{eq:PicardIteration0} starting from $x_0$ is well defined, remains 
		in the closed ball $\overline{U}(x_0,r)$ and converges to $\xi$.
	\item \textsc{A priori error estimate}. The following estimate holds:
\begin{equation} \label{eq:K-ApprioriEstimate2}
	d(x_n, \xi) \preceq \frac{\lambda^n}{1 - \lambda} \, d(x_0, Tx_0) 
	\quad\mbox{for all } n \ge 0.
\end{equation}
	\item \textsc{A posteriori error estimates}. The following estimates hold:
	\begin{equation}
	d(x_n, \xi) \preceq \frac{1}{1-\lambda} \, d(x_n, x_{n+1}) 
	\quad\mbox{for all } n \ge 0;
\end{equation}
\begin{equation}
	d(x_n, \xi) \preceq \frac{\lambda}{1 - \lambda} \, d(x_n, x_{n-1}) 
	\quad\mbox{for all } n \ge 1.
\end{equation}
	\item \textsc{Rate of convergence}. The rate of convergence of the 
	Picard iteration is given by
\begin{equation}
	d(x_{n+1}, \xi) \preceq \lambda \, d(x_n, \xi)
	\quad\mbox{for all } n \ge 1;
\end{equation}
\begin{equation}
	d(x_{n}, \xi) \preceq \lambda^n \, d(x_0, \xi) 
	\quad\mbox{for all } n \ge 0.
\end{equation}
\end{enumerate}
\end{thm}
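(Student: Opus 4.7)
The plan is to deduce most of the conclusions from the iterated contraction principle (Theorem~\ref{thm:CMS-IterationContractionPrinciple2}) and then to supply three additional ingredients: continuity of $T$ (to convert Picard convergence into a fixed point), uniqueness of that fixed point, and the rate-of-convergence estimates in (v).

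First, I would observe that setting $y = Tx$ in hypothesis (a) immediately yields $d(Tx, T^2 x) \preceq \lambda\, d(x, Tx)$ whenever $x, Tx \in D$, which is hypothesis (a) of Theorem~\ref{thm:CMS-IterationContractionPrinciple2}; hypothesis (b) is verbatim the same. Applying that theorem, the Picard iteration $(x_n)$ is well defined, stays in $\overline{U}(x_0, r)$, and converges to some $\xi \in \overline{U}(x_0, r)$, with the a priori and a posteriori estimates of (iii) and (iv) already established. It then remains to verify condition (F1) of Proposition~\ref{prop:CMS-ExistenceFP} by proving that $T$ is continuous on $D$, which will give $T\xi = \xi$.

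For continuity, let $x_n \stackrel{d}{\rightarrow} x$ in $D$. If $\lambda = 0$ then $T$ is constant by (a) and the conclusion is trivial, so assume $\lambda > 0$. Given any $c \succ 0$, axiom (S4) gives $(1/\lambda)\, c \succ 0$, and by the definition of convergence in $X$ we have $d(x_n, x) \prec (1/\lambda)\, c$ for all but finitely many $n$. Hypothesis (a) together with (S4) then yields $d(Tx_n, Tx) \preceq \lambda\, d(x_n, x) \prec c$ for these same $n$, so $Tx_n \stackrel{d}{\rightarrow} Tx$. Thus $T$ is sequentially continuous on $D$, hence continuous by Corollary~\ref{cor:Sequential}, and Proposition~\ref{prop:CMS-ExistenceFP}(F1) delivers $T\xi = \xi$. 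For uniqueness, if $\eta \in D$ satisfies $T\eta = \eta$, then $d(\xi, \eta) = d(T\xi, T\eta) \preceq \lambda\, d(\xi, \eta)$; adding $-\lambda\, d(\xi,\eta)$ via (V1) gives $(1 - \lambda)\, d(\xi, \eta) \preceq 0$, while (V2) applied to $d(\xi, \eta) \succeq 0$ and $1 - \lambda > 0$ gives $(1 - \lambda)\, d(\xi, \eta) \succeq 0$. Since the positive cone meets its negative only at $0$, we conclude $d(\xi, \eta) = 0$, whence $\xi = \eta$. Finally, the rate-of-convergence bound $d(x_{n+1}, \xi) \preceq \lambda\, d(x_n, \xi)$ is hypothesis (a) applied to the pair $(x_n, \xi)$, and iterating yields $d(x_n, \xi) \preceq \lambda^n\, d(x_0, \xi)$.

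The only delicate step is the continuity argument, because in a general cone metric space $d$ need not be sequentially continuous (see Example~\ref{NoncontinuousConeMetric}(iii)), so one cannot simply pass to the limit inside $d$. The Lipschitz-type inequality (a) circumvents this by translating the requirement $Tx_n \stackrel{d}{\rightarrow} Tx$ into producing, for each $c \succ 0$, the auxiliary vector $(1/\lambda)\, c \succ 0$ that controls $d(x_n, x)$ through cone-metric convergence. Every remaining step is either an immediate invocation of Theorem~\ref{thm:CMS-IterationContractionPrinciple2} or a direct manipulation using the axioms of vector and strict vector ordering.
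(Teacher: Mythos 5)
Your proposal is correct and follows essentially the same route as the paper: reduce to the iterated contraction principle (Theorem~\ref{thm:CMS-IterationContractionPrinciple2}) for well-definedness, convergence and the error estimates, then obtain existence via Proposition~\ref{prop:CMS-ExistenceFP}, and derive uniqueness and the rate of convergence directly from the contraction condition (a). The only (harmless) divergence is in the existence step: the paper checks condition (F4) of Proposition~\ref{prop:CMS-ExistenceFP} via the triangle inequality, $d(\xi,T\xi)\preceq \lambda\,d(x,\xi)+d(Tx,\xi)$, whereas you check (F1) by showing that (a) makes $T$ sequentially continuous -- both are equally valid, and your continuity argument (including the reduction of $d(Tx_n,Tx)\prec c$ to $d(x_n,x)\prec (1/\lambda)c$ via (S2) and (S4)) is sound.
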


\begin{proof}
Using the triangle inequality and the contraction condition (a), one can see that condition (F4) holds with 
${\alpha = \lambda}$ and ${\beta = 1}$. 
Conclusions (i)--(iv) with the exception of the uniqueness of the fixed point follow immediately from Theorem~\ref{thm:CMS-IterationContractionPrinciple2} since every contraction mapping is an iterated contraction mapping.
Suppose $T$ has two fixed points $x,y \in D$. Then it follows from (a) that
${d(x, y) \preceq \lambda \, d(x, y)}$ which leads to ${(1 - \lambda)d(x, y) \preceq 0}$
and so ${d(x, y) \preceq 0}$. Hence, ${d(x, y)= 0}$ which means that ${x=y}$. Therefore,
$\xi$ is a unique fixed point of $T$ in $D$. Conclusion (v) follows from (a) and (i) by putting ${x = x_n}$ and ${y = \xi}$.   
\end{proof}

Kirk in his paper \cite{Kir90} wrote for the Banach contraction principle in usual metric spaces the following ``The great significance of Banach's principle, and the reason it is one of the most frequently cited fixed point theorems in all of analysis, lies in the fact that
(i)--(v) contain elements of fundamental importance to the theoretical and practical treatment of mathematical equations''.
We would add that in general cone metrics give finer estimates than usual metrics.

Recall that a selfmapping $T$ of a cone metric space ${(X,d)}$ is called \emph{contraction} on $X$ if there exists 
${0 \le \lambda < 1}$ such that ${d(Tx, Ty) \preceq \lambda \, d(x, y)}$ for all ${x,y \in X}$.
The following short version of the Banach contraction principle for selfmappings in cone metric spaces follows immediately from Theorem~\ref{thm:CMS-BanachContractionPrinciple}. Note that the short version of Banach's principle follows also from the short version of Banach's principle in metric spaces and Theorem~\ref{thm:CMS-Metrizability}.  

\begin{thm} \label{thm:CMS-BanachContractionPrinciple-ShortVersion}
Each contraction $T$ on a cone metric space $(X,d)$ over a solid vector space $Y$ has a unique fixed point and for each $x_0 \in X$ the Picard iteration \eqref{eq:PicardIteration0} converges 
to the fixed point.
\end{thm}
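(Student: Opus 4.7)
The plan is to deduce this short version directly from the full version already proved (Theorem~\ref{thm:CMS-BanachContractionPrinciple}), and to note the alternative route through Theorem~\ref{thm:CMS-Metrizability} that the authors advertise in the preceding paragraph.

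For the direct deduction, first I would take $D = X$ in Theorem~\ref{thm:CMS-BanachContractionPrinciple}. Since $T$ is a selfmapping of $X$, Remark~\ref{rem:Sefmapings} applies and condition (b) of that theorem is satisfied automatically for any choice of initial point $x_0 \in X$; condition (a) is exactly the contraction hypothesis. Hence conclusions (i) and (ii) of Theorem~\ref{thm:CMS-BanachContractionPrinciple} immediately give us, for every starting point $x_0 \in X$, that the Picard iterates $x_{n+1} = T x_n$ stay in $\overline{U}(x_0,r)$ with $r = \frac{1}{1-\lambda}\, d(x_0, Tx_0)$ and converge to a point $\xi$ which is the unique fixed point of $T$ in $X$. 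This settles both the existence/uniqueness and the convergence claim in one step.

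As the alternative route, I would fix any $b \succ 0$, take the Minkowski functional $\|\cdot\|$ of $[-b,b]$, and form the metric $\rho(x,y) = \|d(x,y)\|$ of Theorem~\ref{thm:CMS-Metrizability}. The contraction inequality $d(Tx,Ty) \preceq \lambda\, d(x,y)$ passes immediately through part (iii) of Theorem~\ref{thm:CMS-Metrizability} to give $\rho(Tx,Ty) \le \lambda\, \rho(x,y)$, so $T$ is a classical $\lambda$-contraction on the complete metric space $(X,\rho)$. The classical Banach contraction principle in $(X,\rho)$ then yields a unique fixed point $\xi$ and $\rho$-convergence of every Picard sequence to $\xi$; since by part (i) of Theorem~\ref{thm:CMS-Metrizability} the metric $\rho$ generates the cone metric topology $\tau_d$, this $\rho$-convergence is exactly the required $d$-convergence $x_n \to \xi$.

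There is essentially no obstacle here: the work has been done upstream. The only thing to double-check is the implicit completeness hypothesis on $(X,d)$ (needed to apply either Theorem~\ref{thm:CMS-BanachContractionPrinciple} or the metric-space Banach principle to $(X,\rho)$, the latter being justified by part (ii) of Theorem~\ref{thm:CMS-Metrizability}), and the trivial fact that uniqueness of the fixed point in all of $X$ is inherited from uniqueness in $D = X$.
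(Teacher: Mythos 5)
Your proposal is correct and matches the paper's own treatment, which simply observes that the result follows immediately from Theorem~\ref{thm:CMS-BanachContractionPrinciple} (with condition (b) automatic for selfmappings, per Remark~\ref{rem:Sefmapings}) and notes the same alternative route via Theorem~\ref{thm:CMS-Metrizability} and the classical Banach principle. Your remark about the implicit completeness hypothesis is well taken --- the statement as printed omits ``complete,'' and completeness of $(X,d)$ is indeed needed for either route.
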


\begin{rem}
Theorem~\ref{thm:CMS-BanachContractionPrinciple-ShortVersion} was proved by 
Huang and Zhang \cite[Theorem~1]{HZ07} in the case when $Y$ is a normal Banach space. 
One year later, Rezapour and Hamlbarani \cite[Theorem~2.3]{RH08} improved their result omitting the assumption of normality.
Finally, Du \cite[Theorem~2.3]{Du10} proved this result assuming that $Y$ 
is a locally convex Hausdorff topological vector space.

Recently, Radenovi\'c and Kadelburg \cite[Theorem~3.3]{RK11} have established the a priory estimate \eqref{eq:K-ApprioriEstimate2} for a selfmappings $T$ of a cone metric space $X$ over a solid Banach space $Y$.
\end{rem}

\section{Conclusion}
\label{sec:Conclusion}

In the first part of this paper (Sections 2--7) we develop a unified theory for solid vector spaces.
A real vector space $Y$ with convergence ($\rightarrow$) is called a \emph{solid vector space} if 
it is equipped with a vector ordering ($\preceq$) and a strict vector ordering ($\prec$). 
It turns out that every convergent sequence in a solid vector space has a unique limit.
Every solid vector space $Y$ can be endowed with an order topology $\tau$ such that 
${x_n \rightarrow x}$ implies ${x_n \stackrel{\tau}{\rightarrow} x}$.
It turns out that the converse of this implication holds if and only if the space $Y$ is normal, i.e. the Sandwich theorem holds in $Y$.
Using the Minkowski functional, we show that the order topology on every solid vector space is normable with a monotone norm. 
Among the other results in this part of the paper, we show that an ordered vector space can be equipped with a strict vector ordering if and only if it has a solid positive cone. Moreover, if the positive cone of the vector ordering is solid, then there exists a unique strict vector ordering on this space.

In the second part of the paper (Sections 8--9) we develop a unified theory for cone metric spaces and cone normed spaces over a solid vector space.
We show that every (complete) cone metric space $(X,d)$ over a solid vector space $Y$ is a (completely) metrizable topological space. Moreover, there exists an equivalent metric $\rho$ on $X$ that preserve some inequalities. In particular, an inequality of the type
\begin{equation} \label{eq:SpecialTypeInequality1}
d(x_0,y_0) \preceq \sum_{i=1}^n {\lambda_i \, d(x_i,y_i)} \qquad (x_i \in X, \lambda_i \in \Rset)
\end{equation}
implies the inequality
\begin{equation} \label{eq:SpecialTypeInequality2}
\rho(x_0,y_0) \preceq \sum_{i=1}^n {\lambda_i \, \rho(x_i,y_i)}.
\end{equation}
Using this result one can prove that some fixed point theorems in cone metric spaces are equivalent to their versions in usual  metric spaces. For example, the short version of the Banach contraction principle in a cone metric space is equivalent to its version in a metric space because the Banach contractive condition 
$d(Tx,Ty) \preceq \lambda \, d(x,y)$ is of the type \eqref{eq:SpecialTypeInequality1}.
Let us note that the above mentioned result cannot be applied to many contractive conditions in a cone metric space.
That is why we need further properties of cone metric spaces.
Further, we give some useful properties of cone metric spaces which allow us to prove convergence results for Picard iteration with a priori and a posteriori error estimates. 
Among the other results in this part of the paper we prove that every cone normed space over a solid vector space is normable.

In the third part of the paper (Sections 8--9) applying the cone metric theory  we present full statements of the iterated contraction principle and the Banach contraction principle in cone metric spaces over a solid vector space. 

Let us note that some of the results of the paper (Theorems \ref{thm:CMS-Metrizability},
\ref{thm:CNS-Normability}, \ref{thm:CMS-PropertyConvergence} and \ref{thm:CMS-PropertiesCauchy}; 
Propositions \ref{prop:CMS-IteratedContractionPrinciple} and \ref{prop:CMS-ExistenceFP}) give a method for obtaining convergence theorems (with error estimates) for Picard iteration 
and fixed point theorems in a cone metric space over a solid vector space.  

Finally, let us note that we have come to the idea of a general theory of cone metric spaces (over a solid vector spaces) dealing with convergence problems of some iterative methods for finding all zeros of a polynomial $f$ simultaneously (i.e., as a vector in $\Cset^n$, where $n$ is the degree of $f$).
In our next papers we will continue studying the cone metric space theory and its applications.
For instance, we shall show that almost all results given in Proinov \cite{Pro09,Pro10} can be extended in cone metric spaces over a solid vector space. Also we shall present new convergence theorems for some iterative methods for finding zeros of a polynomial simultaneously. These results generalize, improve and complement a lot of of results given in
the monographs of  Sendov, Andreev, Kyurkchiev \cite{SAK94} and Petkovic \cite{Pet08}.
In particular, it turns out that the cone norms in $\Cset^n$ give better a priori and a posteriori error estimates for iterative methods in $\Cset^n$ than usual norms.

\end{document}